\newcommand{\RNum}[1]{\uppercase\expandafter{\romannumeral #1\relax}}
\newtheorem{lemma}{Lemma}
\newtheorem{example}{Example}
\newtheorem{theorem}{Theorem}
\newtheorem{corollary}{Corollary}
\newtheorem{assumption}{Assumption}
\newtheorem{problem}{Problem}
\newtheorem{proposition}{Proposition}
\newtheorem{remark}{Remark}
\newcommand\bigestimate{\makebox(0,0){\small$B_{\mK} V B_{\mK}^\tr$}}
\newcommand{\mK}{{\mathsf{K}}}
\newcommand{\mH}{{\mathsf{H}}}
\newcommand{\tr}{{{\mathsf T}}}
\newcommand\copyrighttext{%
  \footnotesize \textcopyright 2023 IEEE. Personal use of this material is permitted.
  Permission from IEEE must be obtained for all other uses, in any current or future
  media, including reprinting/republishing this material for advertising or promotional
  purposes, creating new collective works, for resale or redistribution to servers or
  lists, or reuse of any copyrighted component of this work in other works.}
\newcommand\copyrightnotice{%
\begin{tikzpicture}[remember picture,overlay]
\node[anchor=south,yshift=10pt] at (current page.south) {\fbox{\parbox{\dimexpr\textwidth-\fboxsep-\fboxrule\relax}{\copyrighttext}}};
\end{tikzpicture}%
}
\begin{document}

%
\title{On the Optimization Landscape of Dynamic Output Feedback Linear Quadratic Control}
%
%
%

\author{Jingliang Duan, Wenhan Cao, Yang Zheng,  Lin Zhao
\thanks{The work of J. Duan was supported in part by the NSF China under Grant 52202487 and in part by the State Key Laboratory of Automotive Safety and Energy, China under Project KFY2212. The work of L. Zhao was supported by the Singapore Ministry of Education Tier 1 Academic Research Fund (A-0009030-00-00, 22-5460-A0001). Jingliang Duan worked on this project while he was a
postdoctoral fellow at the Department of Electrical and Computer Engineering, National University of Singapore. Corresponding author: L. Zhao}
\thanks{J. Duan is with the School of Mechanical Engineering, University of Science and Technology Beijing, China, and also with the Department of Electrical and Computer Engineering, National University of Singapore, Singapore. {\footnotesize Email:duanjl@ustb.edu.cn}.}

\thanks{W. Cao is with the School of Vehicle and Mobility, Tsinghua University, Beijing, 100084, China. {\footnotesize Email: cwh19@mails.tsinghua.edu.cn}.
}
\thanks{Y. Zheng is with the Department of Electrical and Computer Engineering, University of California San Diego, USA. {\footnotesize Email: zhengy@eng.ucsd.edu}.
}
\thanks{L. Zhao  is with the Department of Electrical and Computer Engineering, National University of Singapore, Singapore. {\footnotesize Email: elezhli@nus.edu.sg}.
}

}

 \maketitle
 \copyrightnotice


\begin{abstract}
The convergence of policy gradient algorithms hinges on the optimization landscape of the underlying optimal control problem. Theoretical insights into these algorithms can often be acquired from analyzing those of linear quadratic control. However, most of the existing literature only considers the optimization landscape for static full-state or output feedback policies (controllers). We investigate the more challenging case of dynamic output-feedback policies for linear quadratic regulation (abbreviated as \texttt{dLQR}), which is prevalent in practice but has a rather complicated optimization landscape. We first show how the \texttt{dLQR} cost varies with the coordinate transformation of the dynamic controller and then derive the optimal transformation for a given observable stabilizing controller. One of our core results is the uniqueness of the stationary point of \texttt{dLQR} when it is observable, which provides an optimality certificate for solving dynamic controllers using 
policy gradient methods. Moreover, we establish conditions under which \texttt{dLQR} and linear quadratic Gaussian control are equivalent, thus providing a unified viewpoint of optimal control of both deterministic and stochastic linear systems. These results further shed light on designing policy gradient algorithms for more general decision-making problems with partially observed information.
\end{abstract}

\begin{IEEEkeywords}
dynamic output feedback, policy gradient, reinforcement learning, optimization landscape
\end{IEEEkeywords}

%
\IEEEpeerreviewmaketitle

\section{Introduction}
Reinforcement learning (RL) aims to directly learn optimal policies that minimize long-term cumulative costs through interacting with unknown environments. The past few years have witnessed great successes of RL in various domains, such as video games \cite{mnih2015human,vinyals2019grandmaster}, robots control \cite{nguyen2019review_RLrobot}, nuclear fusion \cite{degrave2022magnetic}, and recommender systems \cite{zou2019recommender}. Despite the impressive empirical performance of many policy gradient algorithms (such as DDPG \cite{lillicrap2015DDPG}, PPO \cite{schulman2017PPO}, SAC \cite{Haarnoja2018SAC}, DSAC \cite{duan2021distributional}),  theoretical guarantees of their convergence, optimality, and sample complexity remain under-explored and a big challenge.

As a case study, canonical optimal control of linear time-invariant (LTI) systems has been commonly analyzed to help reveal various theoretical properties of policy gradient methods \cite{fazel2018global,bu2019lqr,mohammadi2019CT-LQR,malik2019derivative,hu2022towards}. 
In particular, the linear quadratic regulator (LQR) has regained significant research interest \cite{fazel2018global,bu2019lqr,mohammadi2019CT-LQR,malik2019derivative}. It is well known that the optimal LQR controller is a static linear state feedback policy, and the set of all stabilizing state-feedback gains is path-connected for both discrete-time and continuous-time LTI systems. Furthermore, recent investigations from the learning perspective show that the LQR cost function enjoys an interesting property of gradient dominance~\cite{fazel2018global,mohammadi2019CT-LQR}. This enables a global linear convergence guarantee for a variety of gradient descent methods \cite{fazel2018global,yang2019global}, despite the non-convexity of LQR. An increasing body of subsequent studies has sought to delineate the properties of policy gradient methods in application to different control 
problems for LTI systems, including finite-horizon noisy LQR \cite{hambly2020noisyLQR}, LQR tracking \cite{ren2021lqrtracking},  Markovian jump LQR \cite{jansch2020Mjump}, linear $\mathcal{H}_2$ control with $\mathcal{H}_{\infty}$ constraints \cite{zhang2020robust}, finite MDPs \cite{bhandari2019global}, and risk-constrained LQR \cite{zhao2021primal}.

\vspace{-1.5pt}

The aforementioned literature mainly focuses on the case of static full state-feedback control. In many practical settings, the complete state information of the underlying system may not be directly available. Some recent works have studied static output-feedback (SOF) controllers to optimize a linear quadratic cost function~\cite{duan2021optimization,fatkhullin2021CTSOF,feng2020connectivity,bu2019topological}. Different from the full state-feedback LQR, it is shown that policy gradient methods for solving optimal SOF controllers do not possess the gradient dominance property and thus are unlikely to find the globally optimal solution.  
In fact, the set of stabilizing SOF controllers is typically disconnected, and the stationary points can be local minima, saddle points, or even local maxima \cite{fatkhullin2021CTSOF,feng2020connectivity}. Moreover, even finding a stabilizing SOF controller is a challenging task~\cite{blondel1997np,syrmos1997static}. In addition to the SOF controller, the global convergence for a class of distributed finite-horizon output-feedback LQR problems was established in \cite{furieri2020distributedLQR}, where the policy is subject to subspace constraints and represented by a linear combination of all historical outputs. However, the result does not generalize to infinite-horizon optimal control problems.

This paper takes a step further to analyze the optimization landscape of the infinite-horizon dynamic output-feedback LQR (\texttt{dLQR}). From linear control theory, a \textit{stabilizing} dynamic controller for~\texttt{dLQR} can be found via designing separately a stable observer and a state-feedback controller thanks to the separation principle~\cite{lewis2012optimal}. In the context of RL, an observer-based \textit{optimal} dynamic controller may be learned through gradient descent optimization of the LQR cost. Since (policy) gradient descent is the main workhorse for deep RL, it is of great interest to study its capability via the lens of canonical linear optimal control problems. The recent work~\cite{mohammadi2021lack} showed the non-uniqueness of stationary points of learning an observer-based dynamic controller for the classical Linear Quadratic Gaussian (LQG) control problem, where the controller requires complete knowledge of the system model. The closely related work \cite{zheng2021analysis} instead considered a \textit{general} full-order dynamic controller without the parameterization using system matrices. It was found that all stationary points that correspond to minimal controllers (i.e., whose state-space realization is reachable and observable) are globally optimal to LQG, and that these stationary points are identical up to coordinate (similarity) transformations \cite{zheng2021analysis}. Different from LQG which considers stochastic linear systems and minimizes a limiting \textit{average} cost (or the variance of the steady state), \texttt{dLQR} seeks a dynamic controller that minimizes an infinite-horizon \textit{accumulated} cost for a deterministic LTI system. In the latter case, the cost is influenced by both the similarity transformation and the system transient dynamics induced by the initial system and controller states, which suggests a more complicated optimization landscape. Indeed, little is known about the optimality of the converged solutions of policy gradient methods for solving \texttt{dLQR}. The analysis of LQG in~\cite{zheng2021analysis,mohammadi2021lack,zheng2022escaping} does not extend to the \texttt{dLQR} directly.

In this paper, we provide a comprehensive analysis of the stationary points of \texttt{dLQR}, characterize its optimality in a general setting, and in particular, establish conditions under which \texttt{dLQR} and LQG are equivalent. Specifically,
\begin{enumerate}
    \item We analyze the impact of similarity transformations on the \texttt{dLQR} cost and derive an explicit form of the unique optimal similarity transformation for a given observable (i.e., whose state-space realization is observable) stabilizing controller. 
    \item We show the observable stationary point of \texttt{dLQR} is unique and in a concise form of an observer-based controller with the optimal similarity transformation. Despite the non-connectivity of the stabilizing domain, this result enables us to characterize a set of conditions under which the global optimality can be achieved if a policy gradient method for solving \texttt{dLQR} converges. 
    \item Finally, we establish an interesting equivalent correspondence to LQG, and further prove that under a certain structural constraint on the initial state of the dynamic controller, \texttt{dLQR} enjoys the same symmetry properties induced by similarity transformations and the global
    optimality of minimal stationary points. It provides a unified viewpoint of optimal control of deterministic and stochastic LTI systems. 
\end{enumerate}

Throughout the paper, we provide a few numerical examples to illustrate our theoretical analysis. These findings bring new insights into the performance of policy gradient methods for solving deterministic and stochastic optimal control problems with partially observed states. 
The remainder of this paper is organized as follows.  Section \ref{sec:preliminary} presents the problem statement of \texttt{dLQR}, and Section \ref{sec.formulation} derives an analytical form of the \texttt{dLQR} cost as a function of dynamic controller parameters. Section \ref{sec.similarity_transformation} analyzes the impact of similarity transformations on the \texttt{dLQR} cost. We derive the formula of the gradient and characterize the structure of the observable stationary controller in Section \ref{sec.mainresults}. Section \ref{sec:lqr_lqg} analyzes the relationship between \texttt{dLQR} and LQG. We present numerical experiments in Section \ref{sec:numerical_experi} and provide conclusions in Section \ref{sec:conclusion}. Some technical proofs are provided in the appendix.

\textbf{Notation:} 
Given a matrix $X \in \mathbb{R}^{n \times n}$, we use $\rho(X)$, ${\rm Tr}(X)$, $\lambda_{\rm min}(X)$, and $\|X\|_F$ to denote its spectral radius, trace, minimum eigenvalue (for symmetric matrices), and Frobenius norm, respectively. The notation $\mathbb{S}^n_{+}$ (respectively, $\mathbb{S}^n_{++}$) denotes the set of symmetric $n \times n$ positive semidefinite (respectively, positive definite) matrices. We use $X \succ Y$ ($X \succeq Y$) to represent that $X-Y$ is positive definite (semidefinite). Finally, $\mathrm{GL}_n$ denotes the set of $n\times n$ invertible matrices, and $I_n$ denotes the identity matrix. 

\section{Problem Statement}
\label{sec:preliminary}
In this section, we briefly review the canonical linear quadratic optimal control problem, 
and then motivate the dynamic output-feedback Linear Quadratic Regulator (\texttt{dLQR}). 

\subsection{Linear Quadratic Control}
Consider a discrete-time LTI system 
\begin{equation}  
\label{eq.statefunction}
\begin{aligned}
x_{t+1} &= Ax_t+Bu_t,\\
y_t &= Cx_t,
\end{aligned}
\end{equation}
where $A\in \mathbb{R}^{n\times n}$, $B\in \mathbb{R}^{n\times m}$, $C\in \mathbb{R}^{d\times n}$ are system~matrices, and $x_t\in \mathbb{R}^n$, $u_t\in \mathbb{R}^m$,  $y_t\in \mathbb{R}^d$ are the system state, input, and output measurements at time $t$, respectively. The linear quadratic control seeks a sequence $u_0, u_1, \ldots, u_t, \ldots $ minimizing the infinite-horizon accumulated cost:
\begin{equation}   
\label{eq.objective}
\begin{aligned}
\min_{u_t} \;\; &\mathbb{E}_{x_0\sim \mathcal{D}}\left[\sum_{t=0}^{\infty}\left(x_t^\tr Qx_t+u_t^\tr Ru_t\right) \right] \\
\text{subject to}\;\;&~\eqref{eq.statefunction},
\end{aligned}
\end{equation}
where $Q \in \mathbb{S}_+^{n}$ and $R \in \mathbb{S}_{++}^{m}$ are performance weights, and the control input $u_t$ at time $t$ is allowed to depend on the historical outputs $y_{0}, y_1, \ldots,y_t$ and inputs $u_{0}, u_1, \ldots, u_{t-1}$. We assume that $\mathbb{E}_{x_0\sim \mathcal{D}}[x_0x_0^\tr] \succ 0$, where $\mathcal{D}$ represents the initial state distribution. This assumption is quite standard for learning-based control \cite{fazel2018global,  lee2018primal,malik2019derivative,hu2022towards} and might be deemed as the persistent excitation condition in the data-driven control literature \cite{de2019formulas}. For problem \eqref{eq.objective}, we make the following standard assumption: 
\begin{assumption}
\label{assumption.control_observe}
$(A,B)$ is controllable, and $(C, A)$ and $(Q^{\frac{1}{2}},A)$ are observable.
\end{assumption}

Without loss of generality, we assume $C$ has full row rank.  The state-feedback LQR corresponds to $C=I_n$. In this special case, the globally optimal controller is a static linear feedback $u_t = K x_t$, where $K \in \mathbb{R}^{m \times n}$ can be obtained via solving a Riccati equation~\cite{bertsekas2017dynamic}. In general cases where $\text{rank}(C)<n$, a static output-feedback (SOF) gain $u_t = Ky_t$ with $K \in \mathbb{R}^{m \times d}$ is typically insufficient to obtain good control performance. In fact, the set of stabilizing SOF gains can be highly disconnected \cite{feng2020connectivity}, and even finding a stabilizing SOF controller is generally a challenging task~\cite{blondel1997np,syrmos1997static}. Unlike SOF control, under Assumption \ref{assumption.control_observe}, a stabilizing \textit{dynamic} output-feedback controller always exists and can be found easily, thanks to the well-known separation principle \cite{lewis2012optimal}. 
In particular, the following standard observer-based controller can be designed to stabilize the plant~\eqref{eq.statefunction}
\begin{equation} \label{eq:observer-based-controller}
\begin{aligned}
    \xi_{t+1} &= (A - BK-LC) \xi_t + Ly_{t}, \\
    u_t &= -K \xi_{t},
\end{aligned}
\end{equation}
where $K \in \mathbb{R}^{m \times n}, L \in \mathbb{R}^{n \times d}$ are the feedback gain and observer gain matrices such that $A - BK$ and $A - LC$ are stable~\cite{zhang2020robust}, and $\xi_t \in \mathbb{R}^n$ is the internal state of the controller.

\subsection{The \texttt{dLQR} Problem}
\label{sec.setting}
In this paper, we assume that the  order of the system state $n$ is known. Motivated by learning deterministic dynamic controllers, we consider the class of \textit{full-order} dynamic output-feedback controllers in the form of \footnote{This is in the standard form of strictly proper dynamic controllers, where there is no direct feed-through of $y_t$ to $u_t$ \cite{zheng2021analysis,van2020data}.}
\begin{equation}   
\label{eq.dynamic_controller}
\begin{aligned}
\xi_{t+1} &= A_{\mK}\xi_t + B_{\mK}y_t,\\
u_t &= C_{\mK}\xi_t,
\end{aligned}
\end{equation}
where matrices $C_{\mK}\in \mathbb{R}^{m\times n}$, $B_{\mK} \in \mathbb{R}^{n\times d}$, $A_{\mK} \in \mathbb{R}^{n\times n}$ are the controller parameters to be learned. It is clear that the observer-based controller \eqref{eq:observer-based-controller} is a special case of \eqref{eq.dynamic_controller}. We also note that the controller parameterization in \eqref{eq.dynamic_controller} does not explicitly rely on the knowledge of system parameters $A$, $B$, and $C$, which allows for model-free policy learning.

Let $\xi_0$ be the initial controller state and suppose $(x_0,\xi_0)$ follows a joint distribution $\bar{\mathcal{D}}$. In the case of output-feedback LQR problem,  different initial state distributions often result in distinct optimal output-feedback controllers \cite[Proposition 1]{duan2021optimization}.  In addition to $A_{\mK}$, $B_{\mK}$, and $C_{\mK}$, the transient behavior induced by the initial controller state (or exchangeably, initial state estimate) also affects the accumulated cost. Hence, the optimality of the dynamic output-feedback controller is contingent upon a specific initial joint distribution $\bar{\mathcal{D}}$. Consequently, it is necessary to incorporate the expectation over $\bar{\mathcal{D}}$ when formulating the optimization objective. Then, the dynamic output-feedback LQR (\texttt{dLQR}) which aims to minimize the accumulated linear quadratic cost \cite{modares2016optimal,rizvi2018output,rizvi2019reinforcement,rizvi2020output} is given by
\begin{equation}   
\label{eq.dLQR}
\begin{aligned}
\min_{A_{\mK},B_{\mK},C_{\mK}} \;\; &\mathbb{E}_{(x_0,\xi_0)\sim \bar{\mathcal{D}}}\left[\sum_{t=0}^{\infty}\left(x_t^\tr Qx_t+u_t^\tr Ru_t\right) \right] \\
\text{subject to}\;\;&~\eqref{eq.statefunction},~\eqref{eq.dynamic_controller}.
\end{aligned}
\end{equation}

Problem \eqref{eq.dLQR} presents a general formulation for learning dynamic output-feedback controllers, the analysis of which includes several existing results as special cases under a unified viewpoint. Typically, the distribution of the initial system state can be determined by the practical optimal control tasks, whereas the initial controller state $\xi_0$ can either be chosen from a pre-specified distribution ($\xi_0$ is independent of $\mK$) or treated as a variable to be learned (e.g., $\xi_0$ is a function of $\mK$). Problem \ref{pro.dLQR} in Section \ref{subsection: problem-1} considers the case where $\xi_0$ is independent of $\mK$; Problem \ref{pro.dLQR_initial_estimate} in Section \ref{subsetion:problem-2} considers that case where $\xi_0$ is a function of $\mK$. As a preview of our results, distinct optimization landscapes summarized in \Cref{table.summary} can be delineated for Problem~\eqref{eq.dLQR}, conditioned on different assumptions on the initial distribution $\bar{\mathcal{D}}$.  


\begin{table*}[!htb]
\centering
\caption{Optimization Landscapes of \texttt{dLQR} under Different Cases}
\label{table.summary}
\begin{tabular}{cccc}
\toprule
Assumption on $\xi_0$&\makecell{Cross-correlation of \\$\xi_0$ and $x_0$} & \makecell{Invariance under \\similarity transformation } &  Properties of stationary points  \\
\midrule
\multirow{2}{*}{\makecell{$\xi_0$ is independent of $\mK$\\(see Problem \ref{pro.dLQR})}}& nonsingular& \multirow{2}{*}{\makecell{no \\(Propositions \ref{prop.similarity_variance} and \ref{proposition.optimal_tansformation})}} &\makecell[l]{ uniqueness and the explicit structure of the observable stationary\\ point (\Cref{theorem.solution_expression})}\\
\cmidrule{2-2}\cmidrule{4-4}
& singular&  &\makecell[l]{non-existence of the observable stationary point (\Cref{corollary.exist_of_solution})}\\
\midrule
\makecell{$\xi_0$ is a linear function of $B_{\mK}$\\ (see Problem \ref{pro.dLQR_initial_estimate})}&zero matrix& \makecell{yes\\ (\Cref{theorem.optimal_equivalence})}& \makecell[l]{non-uniqueness, global optimality, and the explicit structure  of\\ minimal stationary points (\Cref{theorem.optimal_equivalence})}\\
\bottomrule
\end{tabular}
\end{table*}

\begin{remark}
In the classical LQG control,  there are additive white Gaussian process and measurement noises in the LTI system~\eqref{eq.statefunction}, and the LQG objective focuses on minimizing an average cost, i.e., the final state covariance. Consequently, the transient behavior is not important in the classical LQG problem. On the contrary, our \texttt{dLQR}~\eqref{eq.dLQR}  aims to  minimize an infinite-horizon accumulated cost, in which the system transient behavior induced by the initial system and controller states is considered. Therefore, the recent results on the landscape analysis of LQG control in \cite{zheng2021analysis,zheng2022escaping} are not directly applicable to the \texttt{dLQR} problem. We will further clarify the connections and differences between the standard LQG and our \texttt{dLQR} in Section \ref{sec:lqr_lqg}. 
\end{remark}

\section{Optimization formulation of the \texttt{dLQR} Problem}
\label{sec.formulation}

In this section, we derive the analytical form of the cost function \eqref{eq.dLQR} in terms of the dynamic controller parameters, which is needed for analyzing its optimization landscape.  
%
\subsection{Cost function in the \texttt{dLQR} Problem} \label{subsection: problem-1}
The closed-loop system of \eqref{eq.statefunction} under \eqref{eq.dynamic_controller} is 
\begin{equation}   
\label{eq.closed-loop-system}
\begin{bmatrix}
     x_{t+1}\\
     \xi_{t+1} 
\end{bmatrix} =  \begin{bmatrix}
     A & BC_{\mK}\\
     B_{\mK}C & A_{\mK}
\end{bmatrix}\begin{bmatrix}
     x_{t}\\
     \xi_{t} 
\end{bmatrix}.
\end{equation}
We further denote 
$$
    \bar{x}_t := \begin{bmatrix}
x_t \\
\xi_t
\end{bmatrix},  \bar{A} := \!\begin{bmatrix}
     A & 0\\
     0 & 0
\end{bmatrix},  \bar{B} :=  \!\begin{bmatrix}
     B & 0 \\
     0 & I_n
\end{bmatrix}, \bar{C}:= \!\begin{bmatrix}
     C & 0\\
     0 & I_n
\end{bmatrix},
$$
and write the controller parameters in a compact form
\begin{equation}
\label{eq.K_structure}
\mK := \begin{bmatrix}
0_{m\times d} & C_{\mK} \\
B_{\mK} & A_{\mK}
\end{bmatrix}.   
\end{equation}

Then \eqref{eq.closed-loop-system} can be expressed as
\begin{equation}
\label{eq.closed-loop-system_short}
\bar{x}_{t+1} = (\bar{A}+\bar{B} \mK \bar{C})\bar{x}_{t}.
\end{equation}
The set $\mathbb{K}$ of all stabilizing controllers is given by
\begin{equation}
\label{eq:stabilizing-K}
\mathbb{K}:=\left\{\mK = \begin{bmatrix}
0_{m\times d} & C_{\mK} \\
B_{\mK} & A_{\mK}
\end{bmatrix}:\rho(\bar{A}+\bar{B}\mK\bar{C})<1\right\}.
\end{equation}
It is known that $\mathbb{K}$ is non-convex but has at most two disconnected components~\cite{zheng2021analysis}. Upon denoting 
$$\bar{Q}=\begin{bmatrix}
     Q & 0_{n\times n}\\
     0_{n\times n} & 0_{n\times n}
\end{bmatrix}, \; \bar{R}=\begin{bmatrix}
     R & 0_{m\times n}\\
     0_{n\times m}& 0_{n\times n}
\end{bmatrix},$$ 
the \texttt{dLQR} problem \eqref{eq.dLQR} can be written as
\begin{equation}  
\label{eq.objective_with_K}
\begin{aligned}
\min_{\mK} \quad&  \mathop{\mathbb{E}}_{\bar{x}_0\sim \bar{\mathcal{D}}}\left[\sum_{t=0}^{\infty}\bar{x}_t^\tr \left(\bar{Q}+\bar{C}^\tr \mK^\tr \bar{R}\mK\bar{C} \right)\bar{x}_t \right]\\
\text{subject to} \quad & \eqref{eq.closed-loop-system_short}, \; \mK\in \mathbb{K}.
\end{aligned}
\end{equation}

For the LTI system \eqref{eq.closed-loop-system_short}, the value function of state $\bar{x}_t$ under a stabilizing controller $\mK \in \mathbb{K}$ takes a quadratic form~as
\begin{equation}
\nonumber
V_{\mK}(\bar{x}_t): = \bar{x}_t^\tr P_{\mK}\bar{x}_t,
\end{equation}
where $P_{\mK} \in \mathbb{S}_{+}^{2n}$ satisfies a Lyapunov equation (see \Cref{lemma.dLQR_cost} below). Define the accumulated state correlation matrix under a stabilizing controller $\mK \in \mathbb{K}$ as
\begin{equation}
\nonumber
\Sigma_{\mK}:=\mathbb{E}_{\bar{x}_0\sim \bar{\mathcal{D}}}\sum_{t=0}^{\infty}\bar{x}_t \bar{x}_t^\tr.
\end{equation}
With each $\mK \in \mathbb{K}$, we can define a $P_{\mK}$ and a $\Sigma_{\mK}$ accordingly. These two matrices can be used to express the \texttt{dLQR} cost value under $\mK$ conveniently.  This is summarized below.  
\begin{lemma}
\label{lemma.dLQR_cost}
For any $ \mK \in \mathbb{K}$, the \texttt{dLQR} cost value is given by
\begin{equation}   
\label{eq.cost_in_P}
J(\mK) = {\rm Tr}(P_{\mK}X)= {\rm Tr}\left(\begin{bmatrix}
   Q  & 0 \\
   0  &  C_{\mK}^\tr RC_{\mK}
\end{bmatrix}\Sigma_{\mK}\right),
\end{equation}
where $P_{\mK}$ and $\Sigma_{\mK}$ are the unique positive semidefinite solutions to the following Lyapunov equations
\begin{subequations}
\begin{align} 
\label{eq.lyapunov_equation}
P_{\mK} &= \bar{Q} + \bar{C}^\tr \mK^\tr \bar{R}\mK\bar{C} \\
&\qquad \qquad +(\bar{A}+\bar{B}\mK\bar{C})^\tr P_{\mK}(\bar{A}+\bar{B}\mK\bar{C}),  \nonumber  \\
\Sigma_{\mK} &= X +(\bar{A}+\bar{B}\mK\bar{C})\Sigma_{\mK}(\bar{A}+\bar{B}\mK\bar{C})^\tr, \label{eq.lyapunov_equation_sigma}
\end{align} 
\end{subequations}
and $X :=  \mathbb{E}_{\bar{x}_0\sim \bar{\mathcal{D}}} \; [\bar{x}_0\bar{x}_0^\tr]$ denotes the initial state correlation matrix. 
\end{lemma}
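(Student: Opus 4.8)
The plan is to exploit the fact that the closed-loop trajectory is purely deterministic given the initial condition: from \eqref{eq.closed-loop-system_short} we have $\bar{x}_t = M^t \bar{x}_0$, where I write $M := \bar{A}+\bar{B}\mK\bar{C}$ for the closed-loop matrix and $\tilde{Q} := \bar{Q} + F^\tr C_{\mK}^\tr R C_{\mK} F$ for the effective cost weight, so that the objective in \eqref{eq.objective_with_K} reads $J(\mK) = \mathbb{E}_{\bar{x}_0}\sum_{t=0}^{\infty} \bar{x}_t^\tr \tilde{Q}\bar{x}_t$. Since $\mK \in \mathbb{K}$ guarantees $\rho(M) < 1$, every infinite series I introduce converges geometrically, which legitimizes interchanging the expectation, the trace, and the summation throughout.

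First I would establish $J(\mK) = {\rm Tr}(P_{\mK}X)$. Define $P_{\mK} := \sum_{t=0}^{\infty} (M^t)^\tr \tilde{Q} M^t$; convergence follows from $\rho(M)<1$, and $P_{\mK}\succeq 0$ since $\tilde{Q}\succeq 0$. Substituting $\bar{x}_t = M^t\bar{x}_0$ gives $\sum_{t=0}^{\infty} \bar{x}_t^\tr \tilde{Q}\bar{x}_t = \bar{x}_0^\tr P_{\mK}\bar{x}_0$ for each realization; taking expectation and using ${\rm Tr}(\bar{x}_0^\tr P_{\mK}\bar{x}_0) = {\rm Tr}(P_{\mK}\bar{x}_0\bar{x}_0^\tr)$ together with $X = \mathbb{E}\,\bar{x}_0\bar{x}_0^\tr$ yields $J(\mK) = {\rm Tr}(P_{\mK}X)$. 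To confirm that this $P_{\mK}$ is the solution of \eqref{eq.lyapunov_equation}, I would peel off the $t=0$ term and factor $M$ out of the remaining series, which reproduces $P_{\mK} = \tilde{Q} + M^\tr P_{\mK} M$; uniqueness of the positive semidefinite solution is standard for discrete-time Lyapunov equations whenever $\rho(M)<1$, since the operator $P \mapsto P - M^\tr P M$ is then invertible.

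Next I would derive the dual expression ${\rm Tr}(\tilde{Q}\Sigma_{\mK})$. By the same truncate-and-factor argument applied to $\Sigma_{\mK} := \mathbb{E}_{\bar{x}_0}\sum_{t=0}^{\infty} \bar{x}_t\bar{x}_t^\tr = \sum_{t=0}^{\infty} M^t X (M^t)^\tr$, one obtains the Lyapunov equation \eqref{eq.lyapunov_equation_sigma}. Swapping trace and summation and using the cyclic property of the trace gives ${\rm Tr}(P_{\mK}X) = \sum_{t}{\rm Tr}(\tilde{Q}M^t X (M^t)^\tr) = {\rm Tr}(\tilde{Q}\Sigma_{\mK})$, which identifies the two expressions. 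It then remains to rewrite $\tilde{Q}$ in block form: since $F = [0,I_n]$, a direct computation gives $F^\tr C_{\mK}^\tr R C_{\mK} F = {\rm diag}(0, C_{\mK}^\tr R C_{\mK})$, so $\tilde{Q} = {\rm diag}(Q, C_{\mK}^\tr R C_{\mK})$, recovering the block-diagonal weight in \eqref{eq.cost_in_P}.

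Because the trajectory is deterministic given $\bar{x}_0$ and all series converge geometrically, there is no genuine analytic difficulty here; the only points requiring care are justifying the Fubini-type interchange of $\mathbb{E}$, ${\rm Tr}$, and $\sum_t$ (covered by geometric convergence) and invoking uniqueness of the positive semidefinite Lyapunov solution. The main bookkeeping step is simply to keep the \emph{primal} argument for $P_{\mK}$ and the \emph{dual} argument for $\Sigma_{\mK}$ parallel, and to verify the explicit block structure of $\tilde{Q}$.
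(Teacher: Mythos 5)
Your proof is correct and takes essentially the same route the paper relies on: the paper states this lemma as well known (citing standard references) rather than proving it, and the series representations you construct for $P_{\mK}$ and $\Sigma_{\mK}$ are precisely the ones the paper itself uses later, in \eqref{eq.P_sum} and in Appendix~\ref{appendix.new_proof}, together with the same trace-cyclicity identification of the two cost expressions. Your added bookkeeping (the block computation of $F^\tr C_{\mK}^\tr R C_{\mK}F$, uniqueness of the Lyapunov solution via invertibility of $P \mapsto P - M^\tr P M$ when $\rho(M)<1$, and the Fubini-type interchange justified by geometric convergence) is all sound and complete.
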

According to Lemma \ref{lemma.dLQR_cost}, the objective $J(\mK)$ is impacted by the initial distribution $\bar{\mathcal{D}}$ through the initial correlation matrix $X$. For theoretical analysis of the landscape, only the correlation matrix is needed, and there is no requirement for a specific type of the initial distribution $\bar{\mathcal{D}}$. Finally, we formulate the \texttt{dLQR} problem \eqref{eq.dLQR} into the following static optimization form.

\begin{problem}[Policy optimization for \texttt{dLQR} where $\xi_0$ is independent of $\mK$]
\label{pro.dLQR}
\begin{equation}  
\nonumber
\begin{aligned}
\min_{\mK} \quad&  J(\mK)\\
\text{\rm subject to} \quad &\mK\in \mathbb{K},
\end{aligned}
\end{equation}
where $J(\mK)$ is defined in~\eqref{eq.cost_in_P} and $\mathbb{K}$ is given in~\eqref{eq:stabilizing-K}. The initial controller state is assumed to follow a fixed distribution, and thus the initial state correlation matrix $X$ in~\eqref{eq.cost_in_P} is independent of parameters $\mK$. 
\end{problem}

We will derive the analytical policy gradients of $J(\mK)$ to analyze the optimization landscape of Problem 1. 


\subsection{Block-wise Lyapunov equations and useful lemmas}
The block-wise Lyapunov equations in \eqref{eq.lyapunov_equation} and \eqref{eq.lyapunov_equation_sigma} will be used extensively in this paper. We write  
\begin{equation} \label{eq:Pk-partition}
P_{\mK}=\begin{bmatrix}
   P_{\mK,11}  &  P_{\mK,12}\\
   P_{\mK,12}^\tr  & P_{\mK,22} 
\end{bmatrix}. 
\end{equation}
In the sequel, the subscript $\mK$ of submatrices of $P_{\mK}$ and $\Sigma_{\mK}$  will be omitted when the dependence on $\mK$ is clear from the context. From \eqref{eq.lyapunov_equation}, we have
\begin{subequations}
\begin{align}
&\begin{aligned}
\label{eq.block_lyapunov_P11}
P_{11}&=Q+ A^\tr P_{11} A +C^\tr B_{\mK}^\tr P_{12}^\tr A \\
&\qquad\qquad + A^\tr P_{12} B_{\mK} C+C^\tr B_{\mK}^\tr P_{22} B_{\mK} C,
\end{aligned}\\
&\begin{aligned}
\label{eq.block_lyapunov_P12}
P_{12}&=A^\tr P_{11}BC_{\mK} + C^\tr B_{\mK}^\tr P_{12}^\tr B C_{\mK} \\
&\qquad\qquad + A^\tr P_{12} A_{\mK} +C^\tr B_{\mK}^\tr P_{22} A_{\mK},
\end{aligned}\\
&\begin{aligned}
\label{eq.block_lyapunov_P22}
P_{22}&=C_{\mK}^\tr RC_{\mK}+ A_{\mK}^\tr P_{12}^\tr B C_{\mK} + C_{\mK}^\tr B^\tr P_{12} A_{\mK} \\
&\qquad\qquad +C_{\mK}^\tr B^\tr P_{11} B C_{\mK}+A_{\mK}^\tr P_{22}A_{\mK}.
\end{aligned}
\end{align}
\end{subequations}
Similarly, upon letting
\begin{equation} \label{eq:Sigma-X-partition}
\Sigma_{\mK}=\begin{bmatrix}
   \Sigma_{\mK,11}  &  \Sigma_{\mK,12}\\
   \Sigma_{\mK,12}^\tr  & \Sigma_{\mK,22} 
\end{bmatrix}, \;\; X=\begin{bmatrix}
   X_{11}  &  X_{12}\\
  X_{12}^\tr  & X_{22} 
\end{bmatrix},
\end{equation} 
we get
\begin{subequations}
\label{eq.block_lyapunov_sigma}
\begin{align}
&\begin{aligned}
\label{eq.block_lyapunov_sigma11}
\Sigma_{11}&=X_{11}+A \Sigma_{11} A^\tr +BC_{\mK} \Sigma_{12}^\tr A^\tr\\
&\qquad \qquad + A \Sigma_{12} C_{\mK}^\tr B^\tr+B C_{\mK} \Sigma_{22} C_{\mK}^\tr B^\tr,
\end{aligned}\\
&\begin{aligned}
\label{eq.block_lyapunov_sigma12}
\Sigma_{12}&=X_{12}+  A \Sigma_{11}C^\tr B_{\mK}^\tr + B C_{\mK} \Sigma_{12}^\tr C^\tr B_{\mK}^\tr \\
&\qquad \qquad + A \Sigma_{12} A_{\mK}^\tr +B C_{\mK} \Sigma_{22} A_{\mK}^\tr,
\end{aligned}\\
&\begin{aligned}
\label{eq.block_lyapunov_sigma22}
\Sigma_{22}&=X_{22}+B_{\mK} C \Sigma_{11} C^\tr B_{\mK}^\tr + A_{\mK}\Sigma_{12}^\tr C^\tr B_{\mK}^\tr \\
&\qquad \qquad + B_{\mK}C \Sigma_{12} A_{\mK}^\tr+A_{\mK} \Sigma_{22}A_{\mK}^\tr.
\end{aligned}
\end{align}
\end{subequations}

Standard Lyapunov theorems will be used throughout the paper. We summarize them below for completeness.
\begin{lemma}[Lyapunov stability theorems \cite{gu2012discrete,lee2018primal}]
\label{lemma.Lyapunov_stability}
\ 
\begin{enumerate}[(a)]
\item If $\rho(A) <1$ and $Q \in \mathbb{S}_{+}^n$, the Lyapunov equation $P=Q+A^\tr P A$ has a unique solution $P \in \mathbb{S}_{+}^n$.
\item Let $Q \in \mathbb{S}_{++}^n$. $\rho(A) <1$ if and only if there exists a unique $P \in \mathbb{S}_{++}^n$ such that $P=Q+A^\tr P A$. 
\item Suppose $(D,A)$ is observable (or $(A^\tr,D^\tr)$ is reachable).  $\rho(A) <1$ if and only if there exists a unique $P \in \mathbb{S}_{++}^n$ such that $P=D^\tr D+A^\tr P A$. 
\end{enumerate}
\end{lemma}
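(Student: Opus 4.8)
The plan is to treat the three parts in order, since (b) and (c) both build on the construction in (a). For part (a), I would first establish existence via the explicit series $P = \sum_{t=0}^{\infty} (A^\tr)^t Q A^t$. Because $\rho(A) < 1$, one can fix some $\rho(A) < \gamma < 1$ and bound $\|A^t\| \le c\gamma^t$, so the series converges absolutely; substituting it into $Q + A^\tr P A$ and reindexing shows it solves the equation, while every summand being positive semidefinite forces $P \in \mathbb{S}_+^n$. For uniqueness I would vectorize the equation as $(I - A^\tr \otimes A^\tr)\,\mathrm{vec}(P) = \mathrm{vec}(Q)$ and note that the eigenvalues of $A^\tr \otimes A^\tr$ are products $\lambda_i \lambda_j$ of eigenvalues of $A$, all of modulus strictly less than one, so $I - A^\tr \otimes A^\tr$ is nonsingular and the solution is unique.

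For part (b), the ``only if'' direction is immediate from part (a): with $Q \in \mathbb{S}_{++}^n$ the series already yields $P \succeq Q \succ 0$, hence $P \in \mathbb{S}_{++}^n$, and uniqueness is inherited from (a). The substantive direction is ``if''. Here I would take an arbitrary eigenpair $A v = \lambda v$ with $v \neq 0$ (allowing complex $\lambda, v$), left- and right-multiply the equation by $v^*$ and $v$, and use $v^* A^\tr P A v = |\lambda|^2 v^* P v$ to obtain $(1 - |\lambda|^2)\, v^* P v = v^* Q v$. Since $P \succ 0$ and $Q \succ 0$ make both $v^* P v$ and $v^* Q v$ strictly positive, this forces $|\lambda| < 1$; as $\lambda$ was arbitrary, $\rho(A) < 1$.

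Part (c) is the crux, because $Q = C^\tr C$ is only positive semidefinite and the strict positivity used in (b) breaks down; this is exactly where observability of $(C,A)$ enters, in two places. For the ``only if'' direction, part (a) gives a unique $P = \sum_{t=0}^\infty (A^\tr)^t C^\tr C A^t \in \mathbb{S}_+^n$; to upgrade to $P \succ 0$ I would suppose $v^* P v = 0$, deduce $\sum_t \|C A^t v\|^2 = 0$ and hence $C A^t v = 0$ for all $t$, placing $v$ in the unobservable subspace and contradicting observability, so $v = 0$ and $P \succ 0$. For the ``if'' direction the eigenvalue argument from (b) now yields only $(1 - |\lambda|^2) v^* P v = \|C v\|^2 \ge 0$, i.e.\ $|\lambda| \le 1$. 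The main obstacle is ruling out eigenvalues on the unit circle: if $|\lambda| = 1$ then $\|Cv\|^2 = 0$, so $C v = 0$, and since $A^t v = \lambda^t v$ we get $C A^t v = \lambda^t C v = 0$ for every $t$, again contradicting observability. Hence $|\lambda| < 1$ for all eigenvalues and $\rho(A) < 1$. Throughout, the only genuinely delicate point is this boundary case in (c), where observability must be invoked to convert the non-strict inequality into the strict spectral bound.
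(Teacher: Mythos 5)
Your proof is correct and complete. Note, however, that the paper does not prove this lemma at all: it is presented as standard background, attributed to the cited references \cite{gu2012discrete,lee2018primal}, so there is no in-paper argument to compare against. Your derivation is the standard textbook one: the convergent series $P=\sum_{t=0}^{\infty}(A^\tr)^t Q A^t$ for existence, Kronecker-product vectorization (nonsingularity of $I-A^\tr\otimes A^\tr$, whose eigenvalues are $1-\lambda_i\lambda_j$) for uniqueness, the eigenvector identity $(1-|\lambda|^2)\,v^* P v = v^* Q v$ for the converse directions, and the PBH-type observability argument to exclude unit-circle eigenvalues in part (c). You also correctly isolate the only genuinely delicate point, namely upgrading $|\lambda|\le 1$ to $|\lambda|<1$ when $Q=C^\tr C$ is merely positive semidefinite, which is exactly where observability must enter; the implicit facts you rely on (that $v^* P v>0$ for complex $v\neq 0$ and real $P\succ 0$, and that $C A^t v=0$ for all $t$ contradicts observability over $\mathbb{C}$) are both sound.
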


\section{\texttt{dLQR} Cost under Different Similarity Transformations}
\label{sec.similarity_transformation}

For dynamic controllers, a widely used concept is the so-called \textit{similarity transformation}~\cite{zhou1996robust}. It can be shown that similarity transformations do not change the control performance of the LQG problem~\cite[Lemma 4.1]{zheng2021analysis}. In contrast, the \texttt{dLQR} cost varies with different similarity transformations due to the transient behavior induced by initial controller states, and thus the optimization landscape of \texttt{dLQR} is distinct.

\subsection{Varying \texttt{dLQR} cost}

Given a controller $\mK$ and an invertible matrix $T\in \mathrm{GL}_n$, we define the similarity transformation of $\mK$ as
\begingroup
\def\arraystretch{0.95}
\setlength\arraycolsep{2.5pt}
\begin{equation} \label{eq:similarity-transformation}
\begin{aligned}
\mathscr{T}_T(\mK) \!= \!\begin{bmatrix}
I_m & 0 \\
0 & T
\end{bmatrix}\mK\begin{bmatrix}
I_d & 0 \\
0 & T
\end{bmatrix}^{-1}\!\!
=\!\begin{bmatrix}
0 & C_{\mK}T^{-1} \\
TB_{\mK} & TA_{\mK}T^{-1}
\end{bmatrix}.
\end{aligned}
\end{equation}
\endgroup
It is easy to verify that if $\mK\in \mathbb{K}$ and $T\in \mathrm{GL}_n$, then $\mathscr{T}_T(\mK) \in \mathbb{K}$ \cite{zheng2021analysis}. 
The following result provides the formula to calculate the cost after a similarity transformation~\eqref{eq:similarity-transformation}.
\begin{proposition}
\label{prop.similarity_variance}
Let $\mK\in \mathbb{K}$ and $T\in \mathrm{GL}_n$. We have
\begin{equation}
\label{eq.cost_transformation}
J(\mathscr{T}_T(\mK))  = {\rm Tr}\left(P_{\mK}\bar{T}^{-1}X\bar{T}^{-\tr}\right),
\end{equation}
where $\bar{T}:=\begin{bmatrix}
I_n & 0 \\
0 & T
\end{bmatrix}$, $P_{\mK}$ is the unique positive semidefinite solution to~\eqref{eq.lyapunov_equation}, and $X =  \mathbb{E}_{\bar{x}_0\sim \bar{\mathcal{D}}} \; [\bar{x}_0\bar{x}_0^\tr]$. 
\end{proposition}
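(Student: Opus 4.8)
The plan is to reduce everything to a single observation: the similarity transformation $\mathscr{T}_T$ acts on the closed-loop data by conjugation with $\bar T$, and then to exploit the uniqueness of the solution to the Lyapunov equation \eqref{eq.lyapunov_equation}. Concretely, I would first compute the closed-loop matrix of the transformed controller. Writing $M := \bar A + \bar B \mK \bar C = \begin{bmatrix} A & BC_{\mK} \\ B_{\mK}C & A_{\mK}\end{bmatrix}$ and using the explicit block form of $\mathscr{T}_T(\mK)$ in \eqref{eq:similarity-transformation}, a direct block multiplication shows $\bar A + \bar B\,\mathscr{T}_T(\mK)\,\bar C = \bar T M \bar T^{-1}$, i.e. the transformation amounts to conjugating the closed-loop matrix by $\bar T = \begin{bmatrix} I_n & 0 \\ 0 & T\end{bmatrix}$. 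In particular $\rho(\bar A + \bar B\,\mathscr{T}_T(\mK)\,\bar C) = \rho(M) < 1$, which re-confirms $\mathscr{T}_T(\mK) \in \mathbb{K}$ and guarantees that $P_{\mathscr{T}_T(\mK)}$ is well defined.

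Next I would track the cost-weight matrix $W_{\mK} := \bar Q + F^\tr C_{\mK}^\tr R C_{\mK} F = \begin{bmatrix} Q & 0 \\ 0 & C_{\mK}^\tr R C_{\mK}\end{bmatrix}$. Since the transformation sends $C_{\mK} \mapsto C_{\mK}T^{-1}$, the new lower-right block becomes $T^{-\tr} C_{\mK}^\tr R C_{\mK} T^{-1}$, and a short congruence computation gives $W_{\mathscr{T}_T(\mK)} = \bar T^{-\tr} W_{\mK}\, \bar T^{-1}$. Substituting both facts into the Lyapunov equation \eqref{eq.lyapunov_equation} for $P' := P_{\mathscr{T}_T(\mK)}$ yields $P' = \bar T^{-\tr} W_{\mK} \bar T^{-1} + \bar T^{-\tr} M^\tr \bar T^\tr P' \bar T M \bar T^{-1}$. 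Conjugating this equation by $\bar T$ (multiply on the left by $\bar T^\tr$ and on the right by $\bar T$) produces $\bar T^\tr P' \bar T = W_{\mK} + M^\tr (\bar T^\tr P' \bar T) M$, which is precisely the Lyapunov equation solved by $P_{\mK}$. Because $\rho(M)<1$ and $W_{\mK}\succeq 0$, Lemma~\ref{lemma.Lyapunov_stability}(a) guarantees a unique positive semidefinite solution, so $\bar T^\tr P' \bar T = P_{\mK}$, i.e. $P_{\mathscr{T}_T(\mK)} = \bar T^{-\tr} P_{\mK} \bar T^{-1}$.

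Finally, I would plug this into the cost formula of Lemma~\ref{lemma.dLQR_cost}, noting crucially that the matrix $X = \mathbb{E}\,\bar x_0 \bar x_0^\tr$ is \emph{fixed} (Problem~\ref{pro.dLQR}) and does \emph{not} transform with $\mK$. Then $J(\mathscr{T}_T(\mK)) = {\rm Tr}\big(\bar T^{-\tr} P_{\mK} \bar T^{-1} X\big)$, and the cyclic property of the trace rearranges this into ${\rm Tr}\big(P_{\mK}\, \bar T^{-1} X \bar T^{-\tr}\big)$, which is exactly \eqref{eq.cost_transformation}. I expect no serious obstacle here; the only point requiring care is bookkeeping of transposes and the asymmetry in the congruence $\bar T^{-\tr}(\cdot)\bar T^{-1}$ versus the conjugation $\bar T(\cdot)\bar T^{-1}$, and the conceptual key is recognizing that it is the fixed $X$ (as opposed to a similarity-covariant Gramian, as in LQG) that breaks the invariance and leaves the residual factor $\bar T^{-1} X \bar T^{-\tr}$ in the cost.
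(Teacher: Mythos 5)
Your proposal is correct and follows essentially the same route as the paper: both hinge on the key identity $P_{\mathscr{T}_T(\mK)} = \bar{T}^{-\tr}P_{\mK}\bar{T}^{-1}$ followed by the cyclic property of the trace applied to $J = {\rm Tr}(P_{\mathscr{T}_T(\mK)}X)$. The only (cosmetic) difference is that the paper justifies this identity via the infinite-series representation \eqref{eq.P_sum} of the Lyapunov solution, whereas you substitute the conjugated closed-loop matrix and congruence-transformed weight into \eqref{eq.lyapunov_equation} and invoke uniqueness from Lemma~\ref{lemma.Lyapunov_stability}(a) — in fact your version spells out the conjugation bookkeeping that the paper compresses into ``Similarly, one has.''
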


\begin{proof}
Since $\mK\in \mathbb{K}$, by Lemma \ref{lemma.Lyapunov_stability}(a), the Lyapunov equation \eqref{eq.lyapunov_equation} admits a unique positive semidefinite solution for both $\mK$ and $\mathscr{T}_T(\mK)$. Hence, the solution of \eqref{eq.lyapunov_equation} given $\mK$ can be expressed as 
\begin{equation*}
P_{\mK}= \sum_{k=0}^{\infty}\left((\bar{A}+\bar{B}\mK\bar{C})^\tr\right)^k \begin{bmatrix}
   Q  & 0 \\
   0  &  C_{\mK}^\tr RC_{\mK}
\end{bmatrix}(\bar{A}+\bar{B}\mK\bar{C})^k.  
\end{equation*}
Similarly, by the definition of $\mathscr{T}_T(\mK)$ in~\eqref{eq:similarity-transformation}, one has
\begin{equation}
\label{eq.similarity_P}
P_{\mathscr{T}_T(\mK)}= \bar{T}^{-\tr}P_{\mK}\bar{T}^{-1}.
\end{equation}
Therefore, by \eqref{eq.cost_in_P}, we have
\begin{equation}
\nonumber
J(\mathscr{T}_T(\mK)) = {\rm Tr}\left(P_{\mathscr{T}_T(\mK)}X\right)  = {\rm Tr}\left(P_{\mK}\bar{T}^{-1}X\bar{T}^{-\tr}\right),
\end{equation}
which completes the proof.
\end{proof}

Proposition \ref{prop.similarity_variance} characterizes how the \texttt{dLQR} cost varies with similarity transformations. The initial controller state~$\xi_0$~is~assumed to follow a fixed distribution and the similarity transformation implies a coordinate change of the internal controller state. If the controller coordinate changes while its initial~state does not change, this leads to a different controller~\eqref{eq.dynamic_controller}, which naturally results in a different \texttt{dLQR} cost value. 

\subsection{Optimal similarity transformation}
One natural consequence of Proposition \ref{prop.similarity_variance} is that for each stabilizing controller $\mK\in \mathbb{K}$, there might exist an optimal similarity transformation matrix $T^\star$  
in the sense that 
\begin{equation}
\label{eq.optimal_T}
J(\mathscr{T}_{T^\star}(\mK)) \leq J(\mathscr{T}_T(\mK)), \quad \forall T\in \mathrm{GL}_n. 
\end{equation}

We refer to \eqref{eq.dynamic_controller} as an \textit{observable  controller} if $(C_{\mK},A_{\mK})$ is observable. We denote the set of observable controllers as 
\begin{equation}
\nonumber
\mathbb{K}_o := \left\{ \begin{bmatrix}
0_{m\times d} & C_{\mK} \\
B_{\mK} & A_{\mK}
\end{bmatrix}: (C_{\mK},A_{\mK}) \text{ is observable} \right\}.
\end{equation}
The following lemma is a discrete-time counterpart to~\cite[Lemma 4.5]{zheng2021analysis}. We provide a brief proof in Appendix \ref{sec:proof_positive} for completeness.
\begin{lemma}
\label{lemma.positive_P}
Under Assumption \ref{assumption.control_observe}, if $\mK\in \mathbb{K}\cap\mathbb{K}_o$, the solution $P_{\mK}$ to \eqref{eq.lyapunov_equation} is unique and positive definite.
\end{lemma}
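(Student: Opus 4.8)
The plan is to exploit the block structure of the Lyapunov equation \eqref{eq.lyapunov_equation} together with Lemma~\ref{lemma.Lyapunov_stability}(c), which converts observability of a pair into strict positive definiteness of a Lyapunov solution. Since $\mK \in \mathbb{K}$, part (a) of Lemma~\ref{lemma.Lyapunov_stability} already guarantees that $P_{\mK}$ exists, is unique, and satisfies $P_{\mK} \succeq 0$. The remaining task is to upgrade $P_{\mK} \succeq 0$ to $P_{\mK} \succ 0$ using the assumption $\mK \in \mathbb{K}_o$, i.e.\ that $(C_{\mK}, A_{\mK})$ is observable.

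First I would recall the closed-form expression \eqref{eq.P_sum}, namely
$$
P_{\mK}= \sum_{k=0}^{\infty}\left((\bar{A}+\bar{B}\mK\bar{C})^\tr\right)^k \begin{bmatrix} Q  & 0 \\ 0  &  C_{\mK}^\tr RC_{\mK} \end{bmatrix}(\bar{A}+\bar{B}\mK\bar{C})^k,
$$
which shows $P_{\mK} \succeq 0$. To prove strict definiteness, I would argue by contradiction: suppose there is a nonzero $v = [v_1^\tr, v_2^\tr]^\tr \in \mathbb{R}^{2n}$ with $v^\tr P_{\mK} v = 0$. Because every summand is positive semidefinite, this forces each term to vanish on $v$, i.e.\ for all $k \ge 0$ the vector $(\bar{A}+\bar{B}\mK\bar{C})^k v$ lies in the kernel of the block-diagonal cost weight. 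Writing out the weight $\bar{Q} + F^\tr C_{\mK}^\tr R C_{\mK} F = \mathrm{diag}(Q,\, C_{\mK}^\tr R C_{\mK})$, the vanishing condition splits into $Q^{\frac12}$ annihilating the first block of the propagated state and $C_{\mK}$ (using $R \succ 0$) annihilating the second block of the propagated state, for every $k$.

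The heart of the argument is to run the observability of $(C_{\mK}, A_{\mK})$ on the $\xi$-component. The key step is to show that the trajectory generated by the closed-loop matrix $\bar{A}+\bar{B}\mK\bar{C}$, when its $x$-part and its $C_{\mK}$-output both vanish for all $k$, must have an identically zero $\xi$-part. Concretely, I would feed the condition ``first block of $(\bar{A}+\bar{B}\mK\bar{C})^k v$ is zero for all $k$'' into the lower block-row of the closed-loop dynamics $\xi_{t+1} = B_{\mK} C x_t + A_{\mK}\xi_t$; once the $x$-trajectory is forced to zero (via observability of $(Q^{\frac12},A)$ in Assumption~\ref{assumption.control_observe}, this requires a short separate argument on the $x_1$ block), the $\xi$-recursion collapses to $\xi_{t+1} = A_{\mK}\xi_t$ with output $C_{\mK}\xi_t \equiv 0$, so observability of $(C_{\mK},A_{\mK})$ yields $\xi_0 = 0$, and then $x_0 = 0$, contradicting $v \neq 0$.

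The main obstacle I anticipate is the coupling between the two blocks: the observability of $(C_{\mK}, A_{\mK})$ alone governs the $\xi$-dynamics only when the $x$-contribution $B_{\mK} C x_t$ is already known to vanish, so I must first pin down the $x$-component. This is exactly where Assumption~\ref{assumption.control_observe} (observability of $(Q^{\frac12}, A)$) does the work: the vanishing of $Q^{\frac12} x_t$ along the closed-loop trajectory, combined with the top block-row $x_{t+1} = A x_t + B C_{\mK}\xi_t$, needs to be disentangled so that the $x$-part can be shown to be zero before invoking the controller's observability. Managing this order of elimination carefully — first the $x$-block, then the $\xi$-block — is the delicate part; everything else reduces to standard manipulations with the geometric sum and the observability Gramian.
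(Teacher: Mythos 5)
Your proof is correct, but it takes a genuinely different route from the paper's. The paper invokes Lemma~\ref{lemma.Lyapunov_stability}(c) directly: positive definiteness of $P_{\mK}$ follows once the pair $\bigl((\bar{Q} + F^\tr C_{\mK}^\tr RC_{\mK}F)^{\frac{1}{2}},\,\bar{A}+\bar{B}\mK\bar{C}\bigr)$ is shown observable, and it proves that observability via the pole-placement characterization: observability is equivalent to free assignability of the eigenvalues of the closed-loop matrix under output injection, and choosing the injection block $Z_{12}=-BR^{-\frac{1}{2}}$ cancels the coupling term $BC_{\mK}$, making the injected matrix block lower-triangular so that its spectrum splits into that of $A+Z_{11}Q^{\frac{1}{2}}$ and $A_{\mK}+Z_{22}R^{\frac{1}{2}}C_{\mK}$, each freely assignable by observability of $(Q^{\frac{1}{2}},A)$ and $(C_{\mK},A_{\mK})$. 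You instead keep Lemma~\ref{lemma.Lyapunov_stability}(a), write $P_{\mK}$ as the Gramian-type series \eqref{eq.P_sum}, and show its kernel is trivial by a trajectory argument. Both proofs ultimately exploit the same structural cancellation of the coupling block $BC_{\mK}$: the paper does it by output injection, you do it along the null trajectory, where $C_{\mK}\xi_k=0$ makes the coupling vanish. Your route is more elementary and self-contained (it avoids the eigenvalue-assignment equivalence for observable pairs, which the paper invokes without proof), at the price of being slightly longer; the paper's is shorter but leans on that classical result. One remark: the ``delicate disentangling'' you anticipate is actually immediate --- since $R\succ 0$, the vanishing of the cost terms already gives $C_{\mK}\xi_k=0$ for all $k$, so the coupling term $BC_{\mK}\xi_k$ in the $x$-recursion is identically zero, $x_{k+1}=Ax_k$ follows at once, and no separate argument on the $x_1$ block is needed; the order of elimination ($x$ first via $(Q^{\frac{1}{2}},A)$, then $\xi$ via $(C_{\mK},A_{\mK})$) is exactly as you describe.
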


Our next technical result characterizes the structure of the optimal similarity transformation for an observable stabilizing controller. 
\begin{proposition}
\label{proposition.optimal_tansformation}
Suppose $X_{22} \succ 0$ in \eqref{eq:Sigma-X-partition} and $\mK\in \mathbb{K} \cap \mathbb{K}_o$. If an optimal transformation matrix $T^\star \in \mathrm{GL}_n$ exists, then it is unique and in the form of
\begin{equation}
\label{eq.optimal_tranforsmation}
T^\star=- X_{22}X_{12}^{-1}P_{\mK,12}^{-\tr}P_{\mK,22},
\end{equation}
where $P_\mK$, partitioned as~\eqref{eq:Pk-partition}, is the unique positive definite solution to \eqref{eq.lyapunov_equation}.
\end{proposition}

\begin{proof}
From \eqref{eq.cost_transformation}, $J(\mathscr{T}_T(\mK))$ can be written as
\begin{equation}
\label{eq.cost_of_similarity}
\begin{aligned}
J(\mathscr{T}_T(\mK))&={\rm Tr}\left(P_{11}X_{11}+P_{12}T^{-1}X_{12}^\tr \right.\\
&\qquad \left. +P_{12}^\tr X_{12}T^{-\tr}+P_{22}T^{-1}X_{22}T^{-\tr}\right).
\end{aligned}
\end{equation}
For notational convenience, given $\mK\in \mathbb{K}$, we denote the cost value $J(\mathscr{T}_T(\mK))$ w.r.t. the similarity transformation $T$ as 
\begin{equation}
\label{eq.definition_of-H}
g(\mH) := J(\mathscr{T}_T(\mK)), \quad \text{with} \; \mH := T^{-1}\in \mathrm{GL}_n.
\end{equation}
It is clear that $g(\mH)$ is twice differentiable w.r.t. $\mH$. 
The gradient of $g(\mH)$ w.r.t. $\mH$ can be derived as
\begin{equation}
\label{eq.gradient_of_gH}
\begin{aligned}
\nabla_{\mH}g(\mH)=2(P_{12}^\tr X_{12}+P_{22}\mH X_{22}).
\end{aligned}
\end{equation}
By Lemma \ref{lemma.positive_P}, the solution $P_{\mK}$ to \eqref{eq.lyapunov_equation} is positive definite, and hence $P_{22}$ is also invertible. By the assumption of $X_{22} \succ 0$, there is a unique solution to $\nabla_{\mH}g(\mH)=0$, given by
\begin{equation}
\label{eq:Hstar}
    {\mH}^\star = - P_{22}^{-1}P_{12}^\tr X_{12}X_{22}^{-1}.
\end{equation}
If $\mH^\star \in \mathrm{GL}_n$, by $T^\star=({\mH}^\star)^{-1}$, we now identify $T^\star$ is in the form of~\eqref{eq.optimal_tranforsmation}. From \eqref{eq:Hstar}, $\mH^\star \in \mathrm{GL}_n$ requires the invertibility of $X_{12}$ and $P_{12}$. Thus, if an optimal transformation matrix $T^\star \in \mathrm{GL}_n$ exists, both $X_{12}$ and $P_{12}$ must be invertible.

Next, we show that $T^\star$ in~\eqref{eq.optimal_tranforsmation} is the unique globally optimal similarity transformation matrix such that \eqref{eq.optimal_T} holds. We analyze the  Hessian of $g(\mH)$ applied to a nonzero direction $Z \in \mathbb{R}^{n\times n}$, which is  
\begin{equation} 
\nonumber
\nabla^2g(\mH)[Z,Z]:=\frac{d^2}{d\eta^2}\Big|_{\eta=0} g(\mH + \eta Z).
\end{equation}
Using \eqref{eq.cost_of_similarity}, we further have
\begin{equation} 
\nonumber
\begin{aligned}
\,&\nabla^2g(\mH)[Z,Z] \\ 
=\,&\frac{d^2}{d\eta^2}\Big|_{\eta=0}{\rm Tr}(P_{12}(\mH+\eta Z)X_{12}^\tr+P_{12}^\tr X_{12}(\mH+\eta Z)^\tr\\ 
&\qquad\qquad\qquad\quad+P_{22}(\mH+\eta Z)X_{22}(\mH+\eta Z)^\tr)\\
=\,&2{\rm Tr}(P_{22}ZX_{22}Z^\tr)\\
\geq\,& 2\lambda_{\rm min}(P_{22})\lambda_{\rm min}(X_{22})\|Z\|_F^2\\
>\,& 0.
\end{aligned}
\end{equation}
We extend the function $g(\mH)$ to be defined on the convex superset $\mathbb{R}^{n\times n}$ of $\mathrm{GL}_n$. 
It is immediate that $g(\mH)$ is strongly convex over $\mathbb{R}^{n\times n}$. Hence,  the global minimum of $g(\mH)$ over $\mathbb{R}^{n\times n}$ is unique if \cref{eq:Hstar} exists. It follows that ${\mH}^\star$ is also the unique global minimum over $\mathrm{GL}_n$ if it is invertible. Hence, we have proved the optimality and uniqueness of $T^\star$.
\end{proof}

\begin{figure}[t]
\centering
\captionsetup{singlelinecheck = false,labelsep=period, font=small}
\captionsetup[subfigure]{justification=centering}
\subfloat[]{\label{fig:example_1}\includegraphics[width=0.7\linewidth]{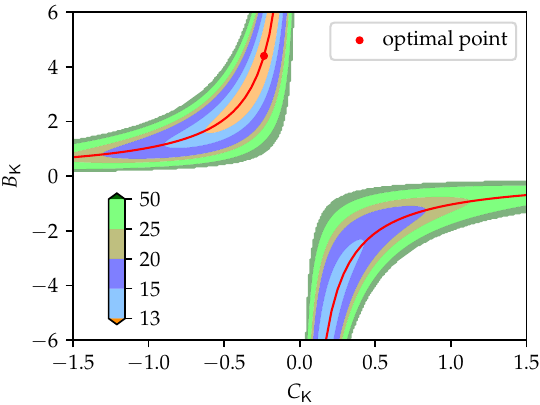}}\\
\subfloat[]{\label{fig:example_2}\includegraphics[width=0.7\linewidth]{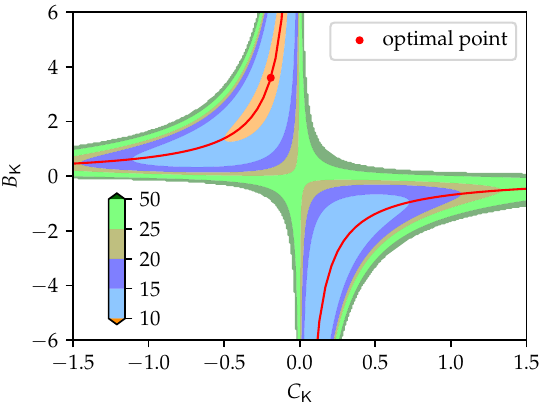}}\\
\caption{The \texttt{dLQR} cost of Examples \ref{example:1} and  \ref{example:2}. (a) \texttt{dLQR} cost in Example \ref{example:1} when fixing $A_{\mK}=-0.944$. The red curve represents all points in the set  $\{(B_{\mK},C_{\mK})|B_{\mK}=1.1T, C_{\mK}=-0.944/T, T\neq 0\}$. (b) \texttt{dLQR} cost in Example \ref{example:2} when fixing $A_{\mK}=-0.765$. The red curve represents all points in the set  $\{(B_{\mK},C_{\mK})|B_{\mK}=0.9T, C_{\mK}=-0.765/T, T\neq 0\}$. Based on an exhaustive numerical grid search, the globally optimal points in Examples \ref{example:1} and \ref{example:2} are located on the red curves in subfigures (a) and (b), respectively. Consequently, the red curves also represent the set of similarity transformations of the globally optimal controller.}
\label{f:problem_1_cost}
\end{figure} 

\Cref{proposition.optimal_tansformation} identifies the form of the optimal similarity transformation, which is unique if it exists. However, it might not exist since $X_{12}$ can be singular.  We give an analytical example in \Cref{appendix.non-existence}.  
%
%
%
%
We conclude this section by providing two examples to demonstrate the impact of similarity transformation on the \texttt{dLQR} cost. 
\begin{example}
\label{example:1}
Consider an open-loop unstable dynamic system \eqref{eq.statefunction} with 
$$A=1.1, \; B=1, \; C=1, \; Q=5, \; R=1.$$ 
The set of stabilizing controllers $\mathbb{K}$ for this system has two disconnected components (see \cite[Theorem D.4, Example 11]{zheng2021analysis}).  For Problem 1, we assume  
\begin{equation} \label{eq:exampleX}
X=\mathbb{E}_{\bar{x}_0\sim \bar{\mathcal{D}}} \; [\bar{x}_0\bar{x}_0^\tr] = \begin{bmatrix}
1&0.25\\
0.25&1
\end{bmatrix}.
\end{equation}
The red curve in Fig. \ref{fig:example_1} shows the orbit of the similarity transformation of the controller
$$\mK = \begin{bmatrix}
0&-0.944\\
1.1&-0.944
\end{bmatrix}.$$
We can see that the \texttt{dLQR} cost changes with different similarity transformations. The optimal similarity transformation corresponds to the red dot in the figure, which demonstrates the result of \Cref{proposition.optimal_tansformation}. \hfill $\square$
\end{example}

\begin{example}
\label{example:2}
Consider an open-loop stable dynamic system \eqref{eq.statefunction} with 
$$A=0.9, \; B=1, \; C=1, \; Q=5, \; R=1.$$ 
The set of stabilizing controllers $\mathbb{K}$ for this system is nonconvex but connected (see \cite[Theorem D.4]{zheng2021analysis}).  To define \texttt{dLQR}~\eqref{eq.dLQR}, we choose the same $X$ as in~\eqref{eq:exampleX}. The red curve in Fig. \ref{fig:example_2} shows the orbit of the similarity transformation of the controller
$$\mK = \begin{bmatrix}
0&-0.765\\
0.9&-0.765
\end{bmatrix}$$
and the red point represents the optimal similarity transformation. As expected, this is consistent with the result in \Cref{proposition.optimal_tansformation}. \hfill $\square$
\end{example}


\section{Gradients and Stationary Points}
\label{sec.mainresults}

In this section, we derive the analytical expression for the gradient of the \texttt{dLQR} cost, 
and characterize the stationary points of Problem \ref{pro.dLQR}. 

\subsection{The Gradient of the \texttt{dLQR} Cost}

The following lemma derives the closed-form formulae of the gradient of the \texttt{dLQR} cost w.r.t. the controller parameters.
\begin{lemma}[Policy Gradient Expression]
\label{lemma:gradient}
 For $\forall  \mK\in \mathbb{K}$, the policy gradient of $J(\mK)$ in Problem \ref{pro.dLQR} is 
\begin{subequations}
\label{eq.gradient}
\begin{align}
&\begin{aligned}
\label{eq.gradient_k12}
\nabla_{C_{\mK}} &J(\mK)=2B^\tr (P_{11}A+P_{12}B_{\mK}C)\Sigma_{12}\\
&\qquad +2((R + B^\tr P_{11} B) C_{\mK}+B^\tr P_{12}A_{\mK})\Sigma_{22},
\end{aligned}\\
&\begin{aligned}
\label{eq.gradient_k21}
\nabla_{B_{\mK}} J(\mK)
&= 2(P_{12}^\tr A +P_{22}B_{\mK}C)\Sigma_{11} C^\tr \\
&\quad + 2(P_{12}^\tr B C_{\mK}+P_{22}A_{\mK})\Sigma_{12}^\tr C^\tr,
\end{aligned}\\
&\begin{aligned}
\label{eq.gradient_k22}
\nabla_{A_{\mK}} J(\mK)&= 2(P_{12}^\tr B C_{\mK}+P_{22}A_{\mK})\Sigma_{22}\\
&\quad + 2 (P_{12}^\tr A+P_{22}B_{\mK}C)\Sigma_{12},
\end{aligned}
\end{align}
\end{subequations}
where $P$ (partitioned as~\eqref{eq:Pk-partition}) and $\Sigma$ (partitioned as~\eqref{eq:Sigma-X-partition}) are the unique positive semidefinite solutions to  \eqref{eq.lyapunov_equation} and \eqref{eq.lyapunov_equation_sigma}, respectively. 
\end{lemma}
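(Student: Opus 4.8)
The plan is to differentiate the identity $J(\mK)={\rm Tr}(P_{\mK}X)$ from Lemma~\ref{lemma.dLQR_cost}, exploiting the Lyapunov equation~\eqref{eq.lyapunov_equation} together with its dual~\eqref{eq.lyapunov_equation_sigma}, and then to extract each partial gradient by a block-matrix bookkeeping. To streamline the algebra I would write $A_{\rm cl}:=\bar{A}+\bar{B}\mK\bar{C}$ and $\tilde{Q}(\mK):=\bar{Q}+F^\tr C_{\mK}^\tr RC_{\mK}F$, so that~\eqref{eq.lyapunov_equation} reads $P_{\mK}=\tilde{Q}(\mK)+A_{\rm cl}^\tr P_{\mK}A_{\rm cl}$. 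Since the initial correlation matrix $X$ is fixed and independent of $\mK$ (Problem~\ref{pro.dLQR}), the first-order variation of the cost is simply $dJ={\rm Tr}(dP_{\mK}\,X)$, so the whole task reduces to expressing $dP_{\mK}$ in a usable form.

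First I would take the differential of the Lyapunov equation, which yields
\begin{equation}
\nonumber
dP_{\mK}=\underbrace{d\tilde{Q}+dA_{\rm cl}^\tr P_{\mK}A_{\rm cl}+A_{\rm cl}^\tr P_{\mK}\,dA_{\rm cl}}_{=:M}+A_{\rm cl}^\tr\,dP_{\mK}\,A_{\rm cl}.
\end{equation}
Because $\mK\in\mathbb{K}$ makes $A_{\rm cl}$ stable, this linear fixed-point relation unrolls into the convergent series $dP_{\mK}=\sum_{k=0}^{\infty}(A_{\rm cl}^\tr)^kMA_{\rm cl}^k$. The key step is then an adjoint (``swap'') identity: using the series representation $\Sigma_{\mK}=\sum_{k=0}^{\infty}A_{\rm cl}^kX(A_{\rm cl}^\tr)^k$ of the solution to~\eqref{eq.lyapunov_equation_sigma} and the cyclic property of the trace, one gets $dJ={\rm Tr}(dP_{\mK}\,X)={\rm Tr}(M\,\Sigma_{\mK})$. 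Finally, since $P_{\mK}$ and $\Sigma_{\mK}$ are symmetric, the two closed-loop contributions in $M$ coincide under the trace, leaving the compact differential
\begin{equation}
\nonumber
dJ={\rm Tr}(d\tilde{Q}\,\Sigma_{\mK})+2\,{\rm Tr}(A_{\rm cl}^\tr P_{\mK}\,dA_{\rm cl}\,\Sigma_{\mK}).
\end{equation}

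It remains to specialize this differential to each controller block. Perturbing only $C_{\mK}$ gives $d\tilde{Q}=F^\tr(dC_{\mK}^\tr RC_{\mK}+C_{\mK}^\tr R\,dC_{\mK})F$, placed in the $(2,2)$ block, and $dA_{\rm cl}=\bar{B}\,(d\mK)\,\bar{C}$ supported in the $(1,2)$ block as $B\,dC_{\mK}$; perturbing $B_{\mK}$ and $A_{\mK}$ leaves $d\tilde{Q}=0$ and places $dA_{\rm cl}$ in the $(2,1)$ block as $dB_{\mK}C$ and the $(2,2)$ block as $dA_{\mK}$, respectively. Substituting the partitions~\eqref{eq:Pk-partition} and~\eqref{eq:Sigma-X-partition} and matching $dJ={\rm Tr}((\nabla_{\bullet}J)^\tr\,d\bullet)$ produces~\eqref{eq.gradient_k12}--\eqref{eq.gradient_k22} after collecting terms. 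I expect the only real obstacle to be this last bookkeeping: one must carefully track which blocks of $P_{\mK}$ and $\Sigma_{\mK}$ multiply the fixed blocks $A,B,C$ and manage the transposes so that, for instance, the $C_{\mK}$ gradient assembles into $2B^\tr(P_{11}A+P_{12}B_{\mK}C)\Sigma_{12}+2((R+B^\tr P_{11}B)C_{\mK}+B^\tr P_{12}A_{\mK})\Sigma_{22}$, where the symmetry of $P_{\mK}$, $\Sigma_{\mK}$, and $R$ is precisely what makes the factor of $2$ appear cleanly.
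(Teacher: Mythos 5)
Your proposal is correct, but it follows a genuinely different route from the paper. The paper's proof is trajectory-based: it differentiates the value function $V_{\mK}(\bar{x}_0)=\bar{x}_0^\tr P_{\mK}\bar{x}_0$ through the Bellman-type recursion $V_{\mK}(\bar{x}_0)=\bar{x}_0^\tr\tilde{Q}(\mK)\bar{x}_0+V_{\mK}((\bar{A}+\bar{B}\mK\bar{C})\bar{x}_0)$, splits the gradient into the explicit dependence on the controller block and the dependence through the propagated state, unrolls along closed-loop trajectories to obtain sums such as $\sum_t \xi_t\xi_t^\tr$ and $\sum_t x_t\xi_t^\tr$, and only then takes the expectation over $\bar{\mathcal{D}}$ to assemble the blocks of $\Sigma_{\mK}$ (mirroring the classical LQR argument of Fazel et al.). You instead work purely algebraically: implicit differentiation of the Lyapunov equation, the series representation $dP_{\mK}=\sum_k (A_{\rm cl}^\tr)^k M A_{\rm cl}^k$, and the adjoint identity $\mathrm{Tr}(dP_{\mK}\,X)=\mathrm{Tr}(M\,\Sigma_{\mK})$ linking the two Lyapunov equations via trace cyclicity, yielding the compact differential $dJ=\mathrm{Tr}(d\tilde{Q}\,\Sigma_{\mK})+2\,\mathrm{Tr}(A_{\rm cl}^\tr P_{\mK}\,dA_{\rm cl}\,\Sigma_{\mK})$. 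I checked the block bookkeeping you deferred: writing $N:=\Sigma_{\mK}A_{\rm cl}^\tr P_{\mK}$, the three perturbations pick out $2B^\tr N_{21}^\tr$ (plus $2RC_{\mK}\Sigma_{22}$ from $d\tilde{Q}$), $2N_{12}^\tr C^\tr$, and $2N_{22}^\tr$, which reproduce \eqref{eq.gradient_k12}--\eqref{eq.gradient_k22} exactly. Your approach buys a cleaner, coordinate-free derivation in which convergence and differentiability are handled once through the stability of $A_{\rm cl}$, and it makes transparent why the formula hinges on $X$ being independent of $\mK$ --- precisely the feature that breaks for Problem~\ref{pro.dLQR_initial_estimate}, where $X$ depends on $B_{\mK}$ and $\nabla_{B_{\mK}}J$ acquires an extra term (cf.\ Theorem~\ref{theorem.optimal_equivalence}). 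The paper's route, in exchange, exposes the interpretation of the $\Sigma$ blocks as accumulated state--estimate correlations along trajectories, which is the form one would estimate from samples in a model-free setting.
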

\begin{proof}
The proof follows similar derivations as the state-feedback LQR case~\cite[Lemma 1]{fazel2018global}. Using \eqref{eq.lyapunov_equation}, the value function of $\bar{x}_0$ reads as 
\begin{equation}
\nonumber
\begin{aligned}
V_{\mK}(\bar{x}_0) 
= \;&\bar{x}_0^\tr P_{\mK}\bar{x}_0\\
= \;&\bar{x}_0^\tr (\bar{Q} + \bar{C}^\tr \mK^\tr \bar{R}\mK\bar{C})\bar{x}_0\\
&\qquad \quad +\bar{x}_0^\tr(\bar{A}+\bar{B}\mK\bar{C})^\tr P_{\mK}(\bar{A}+\bar{B}\mK\bar{C})\bar{x}_0\\
=\;&\bar{x}_0^\tr (\bar{Q} + \bar{C}^\tr \mK^\tr \bar{R}\mK\bar{C})\bar{x}_0+V_{\mK}((\bar{A}+\bar{B}\mK\bar{C})\bar{x}_0).
\end{aligned}
\end{equation}
Before preceding, we first define a projection operator $\mathcal{T}$ of a $(m+n)\times (d+n)$ matrix $Y$ as
\begin{equation*}
\label{eq.operator}
\mathcal{T}(Y)=Y-\begin{bmatrix}I_m&0_{m\times n}\\0_{n\times m}&0_{n\times n}\end{bmatrix}Y\begin{bmatrix}I_d&0_{d\times n}\\0_{n\times d}&0_{n\times n}\end{bmatrix}.
\end{equation*}

Taking the gradient of $V_{\mK}(\bar{x}_0)$ w.r.t. $\mK$ (note that both $V_{\mK}$ and its argument are functions of $\mK$), we have
\begin{equation}
\nonumber
\begin{aligned}
\nabla_{\mK} V_{\mK}(\bar{x}_0) 
= \; &\mathcal{T}(2E_{\mK}\bar{x}_0\bar{x}_0^\tr\bar{C}^\tr +\bar{x}_1^\tr \nabla_{\mK} P_{\mK} \bar{x}_1\big|_{\bar{x}_1=(\bar{A}+\bar{B}\mK\bar{C})\bar{x}_0})\\
= \; &\mathcal{T}\left(2E_{\mK}\sum_{t=0}^{\infty}\bar{x}_t \bar{x}_t^\tr\bar{C}^\tr\right),
\end{aligned}
\end{equation}
where 
$E_{\mK}:= \bar{R}\mK\bar{C} + \bar{B}^\tr P_{\mK}(\bar{A}+\bar{B}\mK\bar{C})$ 
and the last step follows by recursion and the fact that $\bar{x}_{t+1} = (\bar{A}+\bar{B}\mK\bar{C})\bar{x}_t$.  
Finally, by taking the expectation of the gradients w.r.t. the initial distribution $
\bar{\mathcal{D}}$, we obtain that
\begin{equation*}
\begin{aligned}
\nabla_{\mK} V_{\mK}(\bar{x}_0) 
= 2\mathcal{T}(E_{\mK}\Sigma_{\mK}\bar{C}^\tr),
\end{aligned}
\end{equation*}
which can be partitioned as \eqref{eq.gradient}. This completes the proof. 
\end{proof}

\subsection{Structure of the Observable Stationary Point}
\label{sec.main_result}
We now characterize the stationary points of $J(\mK)$ at which the gradient is zero. Before presenting the main result, we need to state the following proposition on the solution of the Riccati equation, which might be of independent interest.
\begin{proposition}
\label{proposition.solution_of_riccati}
Given an observable pair $(C,A)$, define the set of stabilizing observer gains $\mathbb{L}:=\{L\in \mathbb{R}^{n\times d}:\rho(A-LC)<1\}$. Suppose $C$ has full row rank and $X \succ 0$ is partitioned as in \eqref{eq:Sigma-X-partition}. The following algebraic Riccati equation of $\hat{\Sigma}$ has a unique positive definite solution, 
\begin{equation}
\label{eq.sigma_riccati}
{\hat{\Sigma}}=\Delta_X+A {\hat{\Sigma}} A^\tr - A {\hat{\Sigma}}C^\tr \left(C{\hat{\Sigma}}{C}^\tr\right)^{-1} C {\hat{\Sigma}} A^\tr,
\end{equation}
where 
\begin{equation}
\label{eq.definition_delta_X}
\Delta_X:=X_{11}-X_{12}X_{22}^{-1}X_{12}^\tr. 
\end{equation}
Besides, 
\begin{equation}
\label{eq.optimal_L}
L^\star = A{\hat{\Sigma}}C^\tr (C{\hat{\Sigma}}C^\tr)^{-1} \in \mathbb{L}
\end{equation}
is the unique optimal solution to 
\begin{equation}
\label{eq.problem_of_observer}
\begin{aligned}
 \min_{L\in \mathbb{L}} \quad&{\rm Tr}(\hat{\Sigma}_L)\\
\text{\rm subject to} \quad & \hat{\Sigma}_L=\Delta_X  + (A-LC)\hat{\Sigma}_L(A-LC)^\tr.
\end{aligned}
\end{equation}
\end{proposition}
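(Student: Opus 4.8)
The plan is to recognize \eqref{eq.sigma_riccati} as the \emph{dual} of a standard discrete-time LQR problem and to import the classical algebraic-Riccati theory, while using Lemma \ref{lemma.Lyapunov_stability} to certify positive definiteness, stability, and optimality along the way. Concretely, transposing the observer Lyapunov constraint in \eqref{eq.problem_of_observer} and setting the feedback to $L^\tr$ shows that $\hat{\Sigma}_L=\Delta_X+(A-LC)\hat{\Sigma}_L(A-LC)^\tr$ is exactly the value-matrix equation of an LQR instance with system matrices $(A^\tr,C^\tr)$, state weight $\Delta_X$, control weight $0$, and initial covariance $I_n$; minimizing ${\rm Tr}(\hat{\Sigma}_L)$ is the associated LQR cost. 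Since $(C,A)$ is observable, the dual pair $(A^\tr,C^\tr)$ is controllable, hence stabilizable, so $\mathbb{L}\neq\emptyset$ and the reformulation is legitimate.

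Next I would establish existence and positive definiteness. Note first that $\Delta_X\succ0$ as the Schur complement of $X_{22}$ in $X\succ0$. The degenerate control weight $0$ means the clean $R\succ0$ LQR landscape results used elsewhere in this literature cannot be cited verbatim; instead I would invoke the classical Kalman-filter / DARE existence theorem (valid under $(C,A)$ observable and $\Delta_X\succ0$) to obtain a stabilizing solution $\hat{\Sigma}\succeq0$. Because the subtracted term in \eqref{eq.sigma_riccati} equals $(A-L^\star C)\hat{\Sigma}(A-L^\star C)^\tr\succeq0$ with $L^\star=A\hat{\Sigma}C^\tr(C\hat{\Sigma}C^\tr)^{-1}$, every solution obeys $\hat{\Sigma}\succeq\Delta_X\succ0$, so $\hat{\Sigma}$ is positive definite and, since $C$ has full row rank, $C\hat{\Sigma}C^\tr\succ0$ is invertible; this is the key fact that renders the singular control weight harmless. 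Rewriting the equation as $\hat{\Sigma}=\Delta_X+(A-L^\star C)\hat{\Sigma}(A-L^\star C)^\tr$ and applying Lemma \ref{lemma.Lyapunov_stability}(b) with $\Delta_X\succ0$ then yields $\rho(A-L^\star C)<1$, i.e. $L^\star\in\mathbb{L}$.

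Finally I would prove global optimality and uniqueness by a completion-of-squares (comparison) argument that uses only the earlier Lyapunov lemma. For any $L$, algebra gives the identity $(A-LC)\hat{\Sigma}(A-LC)^\tr = A\hat{\Sigma}A^\tr - A\hat{\Sigma}C^\tr(C\hat{\Sigma}C^\tr)^{-1}C\hat{\Sigma}A^\tr + (L-L^\star)(C\hat{\Sigma}C^\tr)(L-L^\star)^\tr$, so \eqref{eq.sigma_riccati} becomes $\hat{\Sigma}=\Delta_X+(A-LC)\hat{\Sigma}(A-LC)^\tr-(L-L^\star)(C\hat{\Sigma}C^\tr)(L-L^\star)^\tr$. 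Subtracting the Lyapunov equation that defines $\hat{\Sigma}_L$ and solving the resulting stable Lyapunov equation yields $\hat{\Sigma}-\hat{\Sigma}_L=-\sum_{k\ge0}(A-LC)^k(L-L^\star)(C\hat{\Sigma}C^\tr)(L-L^\star)^\tr((A-LC)^\tr)^k\preceq0$, with equality iff $L=L^\star$ (here $C\hat{\Sigma}C^\tr\succ0$). Hence ${\rm Tr}(\hat{\Sigma})\le{\rm Tr}(\hat{\Sigma}_L)$ for all $L\in\mathbb{L}$, so $L^\star$ is the unique minimizer and $\hat{\Sigma}=\hat{\Sigma}_{L^\star}$. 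The same inequality gives uniqueness of the positive definite Riccati solution: any two PD solutions $\hat{\Sigma}_a,\hat{\Sigma}_b$ are stabilizing by Lemma \ref{lemma.Lyapunov_stability}(b) and equal $\hat{\Sigma}_{L_a^\star},\hat{\Sigma}_{L_b^\star}$ respectively (plug $L=L_a^\star$, resp. $L=L_b^\star$, into the displayed comparison to kill the quadratic term), so $\hat{\Sigma}_a\preceq\hat{\Sigma}_{L_b^\star}=\hat{\Sigma}_b$ and $\hat{\Sigma}_b\preceq\hat{\Sigma}_{L_a^\star}=\hat{\Sigma}_a$, forcing $\hat{\Sigma}_a=\hat{\Sigma}_b$.

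I expect the existence step to be the main obstacle: because the dual control weight vanishes, the $R\succ0$ gradient-dominance machinery does not apply directly, and one must lean on the classical singular-filter DARE theory, taking care to check that $C\hat{\Sigma}C^\tr$ stays invertible along solutions — precisely what $\hat{\Sigma}\succeq\Delta_X\succ0$ together with the full-row-rank of $C$ guarantees. Once existence is secured, positive definiteness, stability, optimality, and uniqueness all follow routinely from Lemma \ref{lemma.Lyapunov_stability} and the completion-of-squares comparison.
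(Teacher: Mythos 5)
Your completion-of-squares comparison and the uniqueness bootstrap are correct, and they essentially coincide with the paper's own arguments (they appear there as \eqref{eq.sigma_L_and_optimal} and \eqref{eq.optimal_sigma}). However, the existence step --- the part you yourself flag as the main obstacle --- is a genuine gap, not a citable fact. The classical Kalman-filter/DARE existence theorem you invoke (e.g., the result the paper cites from Bertsekas) requires the innovation covariance $V \succ 0$, i.e., a nonsingular dual control weight; Proposition \ref{proposition.solution_of_riccati} is precisely the degenerate case $V = 0$, and the paper states explicitly that this characterization is not easily accessible in the literature --- indeed, that is the proposition's raison d'\^etre. Observability of $(C,A)$ together with $\Delta_X \succ 0$ does not let you quote the regular theory: singular-filtering DARE results need additional hypotheses, and your a posteriori bound $\hat{\Sigma} \succeq \Delta_X \succ 0$ only regularizes a solution \emph{after} you already have one; it cannot substitute for an existence proof.

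What the paper does instead is construct the solution by a Hewer-type policy iteration on observer gains: starting from an arbitrary $L_0 \in \mathbb{L}$ (nonempty by observability), it defines $L_{i+1} = A\hat{\Sigma}_{L_i}C^\tr\bigl(C\hat{\Sigma}_{L_i}C^\tr\bigr)^{-1}$ as in \eqref{eq.PIM}, and shows inductively --- using exactly the completion-of-squares identity you wrote --- that each iterate remains stabilizing (via Lemma \ref{lemma.Lyapunov_stability}(b), since the updated gain satisfies a Lyapunov equation with a positive definite right-hand side) and that $\hat{\Sigma}_{L_0} \succeq \hat{\Sigma}_{L_1} \succeq \cdots \succeq \Delta_X \succ 0$ as in \eqref{eq.nonincreasing_sigma}. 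Monotonicity plus the uniform positive definite lower bound yields a convergent limit $\hat{\Sigma} \succ 0$; the limiting gain is stabilizing again by Lemma \ref{lemma.Lyapunov_stability}(b), and the limit solves \eqref{eq.sigma_riccati}. To repair your proof you would need to replace the citation with this (or an equivalent) constructive argument; once existence is secured, your optimality and uniqueness steps go through verbatim.
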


The proof is given in Appendix \ref{sec.proof_of_riccati}, which is inspired by the convergence analysis of the policy iteration method for LQR \cite[Theorem 1]{hewer1971iterative}. The solution of \eqref{eq.sigma_riccati} is a crucial component in our subsequent analysis on the structure of stationary point. Consider the following canonical discrete-time algebraic Riccati equation, 
\begin{equation}
\label{eq:RiccatiV0}
{\hat{\Sigma}}=\Delta_X+A {\hat{\Sigma}} A^\tr - A {\hat{\Sigma}}C^\tr (C{\hat{\Sigma}}{C}^\tr+V)^{-1} C {\hat{\Sigma}} A^\tr. 
\end{equation}
It is well-known from linear optimal control theory \cite[Proposition 3.1.1]{bertsekas2017dynamic} that the above equation yields a unique positive definite solution when $V\succ 0$ and $(C, A)$ is observable. This is exactly the case considered in \cite[eq. (D.4)]{zheng2021analysis}. However, Proposition \ref{proposition.solution_of_riccati} focuses on the case of $V= 0$, which makes our analysis on stationary point much more complicated than that presented in \cite[Theorem D.4]{zheng2021analysis}. To our knowledge, the characterization of the solution to \eqref{eq:RiccatiV0} with $V = 0$ is not easily accessible in the literature. Proposition \ref{proposition.solution_of_riccati} is thus of independent significance. Besides, it proposes a novel way for finding a stable observer gain by solving \eqref{eq.problem_of_observer}.

Denote the set of stationary points by 
$$
\mathbb{K}_s := \left\{ \begin{bmatrix}
0_{m\times d} & C_{\mK} \\
B_{\mK} & A_{\mK}
\end{bmatrix}:\left\|\begin{bmatrix}
0_{m\times d} & \nabla_{C_{\mK}} J(\mK) \\
\nabla_{B_{\mK}} J(\mK) & \nabla_{A_{\mK}} J(\mK)
\end{bmatrix}\right\|_F=0 \right\}.
$$  
We now investigate the structure of $\mathbb{K}_s$, which is crucial for understanding the performance of policy gradient methods on the \texttt{dLQR} problem. 

\begin{theorem}
\label{theorem.solution_expression}
Suppose $C$ has full row rank, $X \succ 0$, and Assumption \ref{assumption.control_observe} holds. If an observable stationary point $\mK^\star$ (i.e., $\mK^\star \in \mathbb{K}_o \cap \mathbb{K}_s \cap \mathbb{K}$) to Problem \ref{pro.dLQR} exists, it is unique and in the form of 
\begin{equation}
\label{eq.optimal_form_K}
\mK^\star=\mathscr{T}_{T^\star}({\mK}^\ddagger),
\end{equation}
where 
\begin{equation}
\label{eq.riccati_K}
\mK^{\ddagger}:=\begin{bmatrix}
  0   & -K^\star  \\
  L^\star   & A-BK^\star-L^\star C
\end{bmatrix},
\end{equation}
$T^\star$ is the optimal transformation matrix associated with ${\mK}^\ddagger$ computed as 
$ T^\star= X_{22}X_{12}^{-1}$, 
$L^\star$ is defined in \eqref{eq.optimal_L}, and
\begin{equation}
\label{eq.optimal_K}
K^\star= (R+B^\tr {\hat{P}}B)^{-1}B^\tr {\hat{P}}A,
\end{equation}
with ${\hat{P}}$ being the unique positive definite solution to 
\begin{equation}
\label{eq.P_riccati}
{\hat{P}}=Q+ A^\tr {\hat{P}} A -A^\tr {\hat{P}} B (R+B^\tr {\hat{P}} B)^{-1}B^\tr {\hat{P}} A.
\end{equation}
\end{theorem}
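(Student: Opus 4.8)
The plan is to combine the first-order stationarity conditions of Lemma~\ref{lemma:gradient} with the positive-definiteness guarantees available at an observable controller, and then to peel off an \emph{observer part} and a \emph{feedback part} that are identified with the two Riccati equations~\eqref{eq.sigma_riccati} and~\eqref{eq.P_riccati}. First I would record the structural facts: since $\mathsf{K}^\star\in\mathbb{K}_o\cap\mathbb{K}$, Lemma~\ref{lemma.positive_P} gives $P_{\mathsf{K}}\succ0$, and since $X\succ0$, Lemma~\ref{lemma.Lyapunov_stability}(b) gives $\Sigma_{\mathsf{K}}\succ0$; hence $P_{22},\Sigma_{22}$ are invertible and the Schur complement $S:=\Sigma_{11}-\Sigma_{12}\Sigma_{22}^{-1}\Sigma_{12}^\tr\succ0$, so $CSC^\tr\succ0$ because $C$ has full row rank. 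A key preliminary observation is that stationarity of $J$ in the full parameter $\mathsf{K}$ implies its directional derivatives along the similarity-orbit directions $\{\mathscr{T}_T(\mathsf{K}^\star)\}$ vanish; by the strong convexity established in the proof of Theorem~\ref{theorem.optimal_tansformation}, $\mathsf{K}^\star$ must therefore already sit at the optimal transformation of its own orbit ($T=I$). By Corollary~\ref{corollary.existence} this forces $X_{12}$ (hence $P_{12}$) to be invertible, and evaluating~\eqref{eq.gradient_of_gH} at $\mathsf{H}=I$ yields $P_{12}^\tr X_{12}+P_{22}X_{22}=0$, i.e. $-P_{22}^{-1}P_{12}^\tr=X_{22}X_{12}^{-1}=:T^\star$, which is exactly the matrix appearing in the claim.

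Next I would set the three block gradients~\eqref{eq.gradient_k12}--\eqref{eq.gradient_k22} to zero and introduce the shorthands $\Phi:=P_{12}^\tr A+P_{22}B_{\mathsf{K}}C$ and $\Psi:=P_{12}^\tr BC_{\mathsf{K}}+P_{22}A_{\mathsf{K}}$. From $\nabla_{A_{\mathsf{K}}}J=0$ one obtains $\Psi\Sigma_{22}+\Phi\Sigma_{12}=0$, hence $\Psi=-\Phi\Sigma_{12}\Sigma_{22}^{-1}$; substituting this into $\nabla_{B_{\mathsf{K}}}J=0$ collapses that equation to $\Phi S C^\tr=0$. One cannot conclude $\Phi=0$ (that would over-determine the rank-$d$ term $B_{\mathsf{K}}C$); instead, since $CSC^\tr$ is invertible, this pins down the \emph{observer part} $B_{\mathsf{K}}=-P_{22}^{-1}P_{12}^\tr A\,SC^\tr(CSC^\tr)^{-1}=T^\star ASC^\tr(CSC^\tr)^{-1}$, an observer-gain form. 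A parallel manipulation of $\nabla_{C_{\mathsf{K}}}J=0$, after eliminating $\Psi$ and using $R+B^\tr P_{11}B\succ0$, produces $C_{\mathsf{K}}$ in a state-feedback-gain form, and then $A_{\mathsf{K}}$ follows from $\Psi$.

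I would then pull $\mathsf{K}^\star$ back to canonical coordinates via $\mathscr{T}_{(T^\star)^{-1}}$. Under $T^\star$ the observer gain normalizes to $B_{\mathsf{K}^\ddagger}=(T^\star)^{-1}B_{\mathsf{K}^\star}=ASC^\tr(CSC^\tr)^{-1}$, while the choice $T^\star=-P_{22}^{-1}P_{12}^\tr$ yields $-P_{\mathsf{K}^\ddagger,12}^{-\tr}P_{\mathsf{K}^\ddagger,22}=I_n$, as asserted. It then remains to verify that the Schur complement $S$ solves the observer ARE~\eqref{eq.sigma_riccati} with data $\Delta_X$ of~\eqref{eq.definition_delta_X}, so that Proposition~\ref{proposition.solution_of_riccati} identifies $S=\hat{\Sigma}$ and $B_{\mathsf{K}^\ddagger}=L^\star$; and that the companion $P$-block in canonical coordinates solves the control ARE~\eqref{eq.P_riccati}, giving $C_{\mathsf{K}^\ddagger}=-K^\star$ with $K^\star$ from~\eqref{eq.optimal_K} and $A_{\mathsf{K}^\ddagger}=A-BK^\star-L^\star C$. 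Both identifications are obtained by substituting the derived gain forms back into the block Lyapunov equations~\eqref{eq.block_lyapunov_sigma} and~\eqref{eq.block_lyapunov_P11}--\eqref{eq.block_lyapunov_P22} and collapsing them. Uniqueness is then inherited: $\hat{\Sigma},L^\star$ are unique by Proposition~\ref{proposition.solution_of_riccati}, $\hat{P},K^\star$ by the standard uniqueness of the stabilizing Riccati solution, and $T^\star$ by Theorem~\ref{theorem.optimal_tansformation}.

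The main obstacle is the decoupling and Riccati identification of the last two steps, rather than any single estimate. The difficulty is twofold: first, extracting clean observer/feedback gain forms from the coupled block equations despite $\Phi\neq0$, which forces one to route everything through the Schur complement $S$ and the invertibility of $CSC^\tr$ rather than through a naive $\Phi=0$; second, showing that $S$ (and the transformed $P$-block) genuinely satisfy the \emph{algebraic} Riccati equations~\eqref{eq.sigma_riccati} and~\eqref{eq.P_riccati}, which requires carefully reducing the six coupled block Lyapunov equations after the substitution. Observability from Assumption~\ref{assumption.control_observe} is used critically here, both to guarantee $P_{\mathsf{K}}\succ0$ and to make the resulting Riccati solutions unique and positive definite, while the full-row-rank of $C$ is what keeps $CSC^\tr$ invertible throughout.
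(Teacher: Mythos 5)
Your proposal follows the same overall architecture as the paper's proof: positive definiteness of $P_{\mK}$ and $\Sigma_{\mK}$ from Lemma~\ref{lemma.positive_P} and Lemma~\ref{lemma.Lyapunov_stability}(b), solving the three stationarity equations of Lemma~\ref{lemma:gradient} for the gain forms (your $\Phi$/$\Psi$ elimination reproduces exactly the paper's \eqref{eq.K_original}), pulling back by $T^\star$, identifying the Riccati equations \eqref{eq.P_riccati} and \eqref{eq.sigma_riccati}, and inheriting uniqueness from Proposition~\ref{proposition.solution_of_riccati} and the control ARE. Your one genuinely different move is the orbit argument: the paper obtains $P_{12}^\tr X_{12}+P_{22}X_{22}=0$ (its \eqref{eq.relation_P_and_x}) only indirectly, by first proving $P_{12}^\tr\Sigma_{12}+P_{22}\Sigma_{22}=0$ (its \eqref{eq.inverse}) from the block Lyapunov equations of $P$ together with the gain forms, and then converting via the block Lyapunov equations of $\Sigma$; you get \eqref{eq.relation_P_and_x} in one stroke from the chain rule along the similarity orbit, i.e.\ the vanishing of \eqref{eq.gradient_of_gH} at $\mH=I_n$, with strong convexity then certifying that $T=I_n$ is the orbit optimum. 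That shortcut is valid and arguably cleaner than the paper's derivation of \eqref{eq.relation_P_and_x}.

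However, there is a concrete gap in your pullback step. Writing $K$ and $L$ for your derived feedback and observer gains, stationarity gives $C_{\mK^\star}=-K\,\Sigma_{12}\Sigma_{22}^{-1}$ and $A_{\mK^\star}=T^\star(A-BK-LC)\,\Sigma_{12}\Sigma_{22}^{-1}$ with $T^\star=-P_{22}^{-1}P_{12}^\tr$, so conjugation yields $C_{\mK^\ddagger}=C_{\mK^\star}T^\star=-K\,\Sigma_{12}\Sigma_{22}^{-1}T^\star$ and $A_{\mK^\ddagger}=(A-BK-LC)\,\Sigma_{12}\Sigma_{22}^{-1}T^\star$. To land on $-K^\star$ and $A-BK^\star-L^\star C$ you need $\Sigma_{12}\Sigma_{22}^{-1}T^\star=I_n$, which is precisely the paper's identity \eqref{eq.inverse}, $P_{12}^\tr\Sigma_{12}+P_{22}\Sigma_{22}=0$. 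You attribute these two identifications to the $P$-block solving the control ARE, but that equation says nothing about $\Sigma_{12}\Sigma_{22}^{-1}$; and your orbit argument cannot supply the identity either, because the orbit cost \eqref{eq.cost_transformation} is expressed with the \emph{fixed} matrix $X$ (indeed $\Sigma_{\mathscr{T}_T(\mK)}$ does not transform congruently precisely because $X$ stays fixed), so it only ever relates $P$ to $X$, never $P$ to $\Sigma$. The step is fillable with your own toolkit: substituting the derived gain forms into \eqref{eq.block_lyapunov_sigma12}--\eqref{eq.block_lyapunov_sigma22} and using $(A-LC)SC^\tr=0$ (immediate from $L=ASC^\tr(CSC^\tr)^{-1}$) collapses $P_{12}^\tr\Sigma_{12}+P_{22}\Sigma_{22}$ to $P_{12}^\tr X_{12}+P_{22}X_{22}$ --- this is the paper's appendix derivation of \eqref{eq.relation_P_and_x} run in reverse --- and your orbit identity then makes it zero. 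Once \eqref{eq.inverse} is in hand, the remainder of your plan (the two Riccati identifications and uniqueness) goes through as in the paper.
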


\begin{proof}
Let us suppose that there exists an observable stationary point $\mK^\star \in \mathbb{K}_o \cap \mathbb{K}_s \cap \mathbb{K}$. By Lemma \ref{lemma.Lyapunov_stability}(b) and Lemma \ref{lemma.positive_P}, we know $\Sigma_{\mK^\star}, P_{{\mK}^\star}$ are both positive definite. By the Schur complement,  it follows that
\begin{equation} \label{eq:P-tilde}
\begin{aligned}
{\widetilde{P}}&:=P_{11}-P_{12}P_{22}^{-1}P_{12}^\tr \in \mathbb{S}^n_{++},\\  {\widetilde{\Sigma}}&:=\Sigma_{11}  - \Sigma_{12}\Sigma_{22}^{-1}\Sigma_{12}^\tr \in \mathbb{S}^n_{++}.
 \end{aligned}
\end{equation}
For notational convenience, we will omit the subscript of the submatrices of $\Sigma_{\mK^\star}$ and  $P_{{\mK}^\star}$ under the observable stationary point $\mK^\star$ throughout this proof. 

Let \eqref{eq.gradient} equal to 0, and we can solve the linear equations for $A_{\mK}$, $B_{\mK}$, and $C_{\mK}$ to obtain (see Appendix \ref{sec.detail_derivation_of_K_original} for details):\begin{subequations}
\label{eq.K_original}
\begin{align}
&\begin{aligned}
\label{eq.K_12_original}
C_{\mK^\star}& =-K^\star\Sigma_{12}\Sigma_{22}^{-1},
\end{aligned}\\
&\begin{aligned}
\label{eq.K_21_original}
B_{\mK^\star} = -P_{22}^{-1} P_{12}^\tr L^\star,
\end{aligned}\\
&\begin{aligned}
\label{eq.K_22_original}
A_{\mK^\star}=-P_{22}^{-1}P_{12}^\tr (A-BK^\star-L^\star C)\Sigma_{12}\Sigma_{22}^{-1},
\end{aligned}
\end{align}
\end{subequations}
where $K^\star$ and $L^\star$ are
$$
\begin{aligned}
K^\star &= (R + B^\tr{\widetilde{P}} B)^{-1}B^\tr {\widetilde{P}}A, \\
L^\star &= A{\widetilde{\Sigma}} C^\tr (C{\widetilde{\Sigma}} C^\tr)^{-1},
\end{aligned}
$$
where $\widetilde{P}$ and $\widetilde{\Sigma}$ are defined in \eqref{eq:P-tilde}.

Combining  \eqref{eq.block_lyapunov_P12}, \eqref{eq.block_lyapunov_P22}, and \eqref{eq.K_original}, it can be further shown that (detailed calculations are provided in Appendix \ref{sec.detail_derivation_of_inverse})
\begin{equation}
\label{eq.inverse}
 P_{12}^\tr \Sigma_{12} + P_{22}\Sigma_{22}=0,  
\end{equation}
and hence,
\begin{equation}
\label{eq.inverse_relation}
(-P_{22}^{-1}P_{12}^\tr)^{-1}=\Sigma_{12}\Sigma_{22}^{-1}.
\end{equation}
We then define $T^\ddagger:=-P_{22}^{-1}P_{12}^\tr$, and thus $(T^\ddagger)^{-1}=\Sigma_{12}\Sigma_{22}^{-1}$. 

The transformation matrix $T^\ddagger$, however, still depends on $\mK^\star$. Unlike \cite[Theorem D.4]{zheng2021analysis}, the cost of Problem \ref{pro.dLQR} varies with different similarity transformations. Therefore, it is necessary to decouple $T^\ddagger$ from $\mK^\star$ to make expression \eqref{eq.K_original} explicit. From \eqref{eq.block_lyapunov_sigma12}, \eqref{eq.block_lyapunov_sigma22}, and  \eqref{eq.K_original}, equation \eqref{eq.inverse} can be rewritten as 
\begin{equation}
\label{eq.relation_P_and_x}
P_{12}^\tr X_{12} + P_{22}X_{22}=0.
\end{equation}
See Appendix \ref{sec.detail_derivation_of_T} for details on deriving \eqref{eq.relation_P_and_x}. Combining \eqref{eq.inverse} with \eqref{eq.relation_P_and_x} leads to
\begin{equation}
\label{eq.T_and_X}
{T^\ddagger}= -P_{22}^{-1}P_{12}^\tr=X_{22}X_{12}^{-1},
\end{equation}
which depends solely on the initial distribution $\bar{\mathcal{D}}$.

Based on \eqref{eq.K_original}, \eqref{eq.inverse_relation}, and \eqref{eq.T_and_X}, we can see that $\mK^\star$ is in the form shown in \eqref{eq.optimal_form_K}. It remains to show that 
\begin{itemize}
    \item ${T^\ddagger}$ is the optimal transformation matrix of ${\mK}^\ddagger$ (i.e., $T^\ddagger=- X_{22}X_{12}^{-1}P_{{\mK}^\ddagger,12}^{-\tr}P_{{\mK}^\ddagger,22}=T^\star$, see \eqref{eq.optimal_tranforsmation});
    \item $\widetilde{P}={\hat{P}}$ and $\widetilde{\Sigma}={\hat{\Sigma}}$, that is, they are the unique positive definite solutions to the Riccati equations \eqref{eq.P_riccati} and \eqref{eq.sigma_riccati}, respectively. 
\end{itemize}
First,  by \eqref{eq.optimal_form_K} and \eqref{eq.similarity_P} in Proposition \ref{prop.similarity_variance}, we have
\begin{equation}
\label{eq.partition_of_PK}
P_{{\mK}^\star}=\begin{bmatrix}
I_n & 0 \\
0 & (T^\ddagger)^{-\tr}
\end{bmatrix}P_{{\mK}^\ddagger}\begin{bmatrix}
I_n & 0 \\
0 & (T^\ddagger)^{-1}
\end{bmatrix}.
\end{equation}
Plugging the expression of $P_{{\mK}^\star}$ (see \eqref{eq.partition_of_PK}) in \eqref{eq.relation_P_and_x}, it is not hard to show that
\begin{equation}
\label{eq.transformation_equality}
(T^\ddagger)^{-\tr}P_{{\mK}^\ddagger,12}^\tr X_{12} + (T^\ddagger)^{-\tr}P_{{\mK}^\ddagger,22}(T^\ddagger)^{-1}X_{22}=0.
\end{equation}
Using \eqref{eq.T_and_X} in \eqref{eq.transformation_equality}, it directly leads to $$
-P_{{\mK}^\ddagger,22}^{-1}P_{{\mK}^\ddagger,12}^\tr=I_n.
$$
Therefore, by \eqref{eq.optimal_tranforsmation} of  \Cref{proposition.optimal_tansformation}, one has
\begin{equation}
\nonumber
{T^\ddagger}=X_{22}X_{12}^{-1}=- X_{22}X_{12}^{-1}P_{{\mK}^\ddagger,12}^{-\tr}P_{{\mK}^\ddagger,22}=T^\star,
\end{equation}
which is exactly the optimal transformation matrix of ${\mK}^\ddagger$.

Then, we will derive $\widetilde{P}=\hat{P}$.  Multiplying \eqref{eq.block_lyapunov_P22} by ${T^\star}^\tr$ on the left and by ${T^\star}$ on the right (or multiplying \eqref{eq.block_lyapunov_P12} by ${T^\star}$ on the right), we have
\begin{equation}
\label{eq.P_22_transform}
\begin{aligned}
&P_{12}P_{22}^{-1}P_{12}^\tr= A^\tr {\widetilde{P}} B (R+B^\tr {\widetilde{P}} B)^{-1}B^\tr {\widetilde{P}} A \\
&\quad +A^\tr P_{12}P_{22}^{-1}P_{12}^\tr A+C^\tr {L^\star}^\tr P_{12}P_{22}^{-1}P_{12}^\tr L^\star C\\
&\quad -A^\tr P_{12}P_{22}^{-1}P_{12}^\tr L^\star C-C^\tr {L^\star}^\tr P_{12}P_{22}^{-1}P_{12}^\tr A.
\end{aligned}
\end{equation}
Then,  plugging \eqref{eq.K_21_original} in \eqref{eq.block_lyapunov_P11} leads to
\begin{equation}
\label{eq.P_11_expand}
\begin{aligned}
P_{11}&=Q+ A^\tr P_{11} A +C^\tr {L^\star}^\tr P_{12}  P_{22}^{-1}P_{12}^\tr L^\star C\\
&- A^\tr P_{12} P_{22}^{-1}P_{12}^\tr L^\star C- C^\tr {L^\star}^\tr P_{12} P_{22}^{-1} P_{12}^\tr A .
\end{aligned}
\end{equation}
Subtracting \eqref{eq.P_22_transform} from \eqref{eq.P_11_expand}, we can finally see that $\widetilde{P}=\hat{P}$ satisfying the Riccati equation \eqref{eq.P_riccati}.

Through
similar steps, we can derive from \eqref{eq.block_lyapunov_sigma} that $\widetilde{\Sigma}=\hat{\Sigma}$ satisfying the Riccati equation \eqref{eq.sigma_riccati}. By Proposition \ref{proposition.solution_of_riccati}, we further know that \eqref{eq.sigma_riccati} only admits a unique positive definite solution. This completes the proof.
\end{proof}

Note that the positive definiteness of $P_{\mK}$ and $\Sigma_{\mK}$ were utilized in the proof Theorem \ref{theorem.solution_expression}. By Lemma \ref{lemma.positive_P}, we observe that $P_{\mK}\in \mathbb{S}_{++}^{2n}$ if $\mK\in \mathbb{K}_o$; by Lemma \ref{lemma.Lyapunov_stability}(b) and Lemma \ref{lemma.positive_P}, $\Sigma_{\mK}\in \mathbb{S}_{++}^{2n}$ if $X\succ 0$ or $(\bar{A}+\bar{B}\mK\bar{C},W)$ is reachable, where $X=WW^\tr$. Compared to the reachable condition that relies on the system dynamics, $\mK$, and $X$, the positive definiteness of $X$ can be achieved more easily by carefully designing a proper initial controller state distribution. Therefore, we assume $X \succ 0$ in Proposition \ref{proposition.solution_of_riccati} and Theorem \ref{theorem.solution_expression}. Similar to the assumption of $\mathbb{E}_{x_0\sim \mathcal{D}}[x_0x_0^\tr] \succ 0$ for optimizing the full state-feedback gain \cite{fazel2018global,jansch2020Mjump}, the condition $X \succ 0$ can be informally thought as the persistent excitation condition for the augmented system \eqref{eq.closed-loop-system_short}.

Theorem~\ref{theorem.solution_expression} reveals that the observable stationary point~$\mK^\star$ has an elegant closed-form: it is the optimal similarity transformation of a special observer-based controller $\mK^\ddagger$. In particular, $K^\star$ of \eqref{eq.riccati_K} is exactly the optimal control gain of the state-feedback LQR and $L^\star$ is a stable observer gain. In linear optimal control theory \cite{lewis2012optimal}, the observer-based controller consists of a stable observer and  a state-feedback LQR, which are designed separately; however, the transient behavior induced by the initial system and controller states is not considered. In the learning context of the \texttt{dLQR} formulation, the dynamic controller is learned as a whole, with initial states sampled from $\bar{\mathcal{D}}$ in practice. In the analysis, \texttt{dLQR} cost depends on $\bar{\mathcal{D}}$, and thus both the observer gain $L^\star$ and the optimal~transformation matrix ${T^\star}$ in \eqref{eq.optimal_form_K} are uniquely determined by $\bar{\mathcal{D}}$. 

In practical applications, if the optimal controller of a given system is known to exist and be observable, then $\mK^\star$ in~\eqref{eq.optimal_form_K} must be the globally optimal controller due to its uniqueness. For instance, the observable stationary points of Examples \ref{example:1} and \ref{example:2} are
$$\mK_1^\star = \begin{bmatrix}
0&-0.236\\
4.4& -0.944
\end{bmatrix} \quad \text{and} \quad \mK_2^\star = \begin{bmatrix}
0&-0.191\\
3.6& -0.765
\end{bmatrix},$$ 
and they agree with the exhaustive numerical grid search for the globally optimal points (marked as red points in Fig. \ref{f:problem_1_cost}) in Examples \ref{example:1} and \ref{example:2}, respectively. As described in Fig. \ref{f:problem_1_cost}, the red curves also represent the set of similarity transformations of the globally optimal controller.

Next, we discuss a special case of \Cref{theorem.solution_expression} when we have perfect knowledge of the initial system state, i.e., $\xi_0 = x_0$. We can show that the observable stationary point $\mK^\star$ in \eqref{eq.optimal_form_K} is globally optimal for \texttt{dLQR}, yielding control performance equal to the optimal full state-feedback LQR. Note that the positive definiteness of $\Sigma_{\mK^\star}$ was utilized in the establishment of \Cref{theorem.solution_expression}. Since $\xi_0=x_0$, $X =  \mathbb{E}_{\bar{x}_0\sim \bar{\mathcal{D}}} \; [\bar{x}_0\bar{x}_0^\tr]$ is not positive definite. In this case, by \Cref{lemma.Lyapunov_stability}(c), the reachability of $(\bar{A}+\bar{B}\mK^\star\bar{C},X^{\frac{1}{2}})$ is required to guarantee the positive definiteness of $\Sigma_{\mK^\star}$.

\begin{proposition}[Equivalence between \texttt{dLQR} and state-feedback LQR] 
\label{proposition.equivalent_to_LQR}
Suppose $C$ has full row rank, $X_{11}=\mathbb{E}_{x_0\sim \mathcal{D}} [x_0x_0^\tr] \succ 0$, and Assumption \ref{assumption.control_observe} holds. The observable stationary point (i.e., $\mK^\star \in \mathbb{K}_o \cap \mathbb{K}_s \cap \mathbb{K}$) to Problem \ref{pro.dLQR} with $\xi_0=x_0$ is in the form of \eqref{eq.optimal_form_K} with $T^\star=I_n$, which is globally optimal for \texttt{dLQR}, yielding the same control performance as the optimal state-feedback LQR. 
\end{proposition}

\begin{proof}
Given $\xi_0 = x_0$, we have $X_{12} = X_{22}= X_{11}$. Because ${X_{11}}^{\frac{1}{2}} \in \mathbb{S}_{++}^n$, $(A, {X_{11}}^{\frac{1}{2}})$ and $(A_{\mK^\star}, {X_{11}}^{\frac{1}{2}})$ are both reachable. Therefore, similar to Lemma \ref{lemma.positive_P}, it is easy to show that $\Sigma_{\mK^\star} \in \mathbb{S}_{++}^{2n}$. Then following the similar proof steps for  \Cref{theorem.solution_expression}, we can easily show that the observable stationary point to Problem \ref{pro.dLQR} given $\xi_0=x_0$ is in the form of \eqref{eq.optimal_form_K}. 

By inserting \eqref{eq.optimal_form_K} with $T^\star= X_{22}X_{12}^{-1}=I_n$ into \eqref{eq.dynamic_controller}, the dynamic controller reads \eqref{eq.optimal_form_K} as
$$
\begin{aligned}
u_0 &= -K^\star\xi_0, \\
u_1 &= -K^\star\left( (A-BK^\star)\xi_0 + L^\star C(x_0-\xi_0) \right),\\
&\vdots\\
u_t &= -K^\star\Big( (A-BK^\star)^t\xi_0 \\
&\quad+\sum_{k=0}^{t-1} (A-BK^\star)^kL^\star C(A-L^\star C)^{t-1-k}(x_0-\xi_0) \Big).
\end{aligned}
$$
Since $\xi_0 = x_0$, we now get 
$$\begin{bmatrix}
 u_0\\
 u_1\\
 \vdots\\
 u_t
\end{bmatrix}= \begin{bmatrix}
-K^\star x_0\\
-K^\star(A-BK^\star)x_0\\
 \vdots\\
-K^\star (A-BK^\star)^t x_0
\end{bmatrix},$$
which is equivalent to the globally optimal control sequence of the state-feedback LQR. It is clear that this result holds for any observer gain $L$. This completes the proof.
\end{proof}

The result of Theorem \ref{theorem.solution_expression} is important since it provides a certificate of optimality for policy gradient methods. In particular, this allows us to check whether the converged point of policy gradient methods is globally optimal to Problem~\ref{pro.dLQR} under moderate assumptions.

\begin{corollary}
\label{corollary.globle_optimal}
Given an LTI system \eqref{eq.statefunction}, suppose that the globally optimal controller of Problem \ref{pro.dLQR} is observable (i.e., $(C_{\mK},A_{\mK}) \text{ is observable}$). Consider a policy gradient algorithm $\mK_{i+1}=\mK_{i}-\alpha_i \nabla_{\mK_i} J(\mK_i)$, where $\alpha_i>0$ is an appropriate learning rate such that $\mathsf{K}_i \in \mathbb{K}$, $\forall i\geq 0$ and $\inf_i \alpha_i >0$. If the algorithm converges to an observable stationary point $\mK^\star \in\mathbb{K}_s \cap \mathbb{K}_o$, then $\mK^\star$ is globally optimal.
\end{corollary}

Since the full-order dynamic controller \eqref{eq.dynamic_controller} does not depend on
system parameters, our results
can also be generalized to the model-free case. In the model-free setting, existing policy-based learning techniques, such as the zeroth-order optimization approach, provide an effective way to obtain an unbiased estimate of the policy gradient from sample trajectories \cite{conn2009introduction,nesterov2017random,fazel2018global}. Note that Corollary \ref{corollary.globle_optimal} does not discuss under what conditions will the gradient descent iterates converge. The convergence of model-based or model-free policy gradient methods is left for future work.

Finally, we highlight that the observable stationary point of Problem \ref{pro.dLQR} does not exist when $X_{12}$ is singular. 

\begin{corollary}
\label{corollary.exist_of_solution}
The observable stationary point of Problem \ref{pro.dLQR} exists only if $X_{12}$ is invertible.
\end{corollary}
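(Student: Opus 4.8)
The plan is to argue directly from the stationarity identity established inside the proof of Theorem~\ref{theorem.solution_expression}. Suppose an observable stationary point $\mK^\star \in \mathbb{K}_o \cap \mathbb{K}_s \cap \mathbb{K}$ exists. The key observation is that the derivation of~\eqref{eq.relation_P_and_x} rests only on the gradient conditions~\eqref{eq.K_original} together with the block Lyapunov equations~\eqref{eq.block_lyapunov_sigma12}--\eqref{eq.block_lyapunov_sigma22}, none of which presupposes anything about $X_{12}$. Hence, at $\mK^\star$ we may invoke
\[
P_{12}^\tr X_{12} + P_{22}X_{22}=0, \qquad \text{i.e.}\qquad P_{12}^\tr X_{12} = -P_{22}X_{22},
\]
where $P_{12}, P_{22}$ are the blocks of $P_{\mK^\star}$ partitioned as in~\eqref{eq:Pk-partition} and $X_{12}, X_{22}$ are the blocks of $X$ as in~\eqref{eq:Sigma-X-partition}.

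The next step is to certify that the right-hand side above is invertible. Since $X\succ 0$, its principal submatrix $X_{22}$ is positive definite and therefore invertible. Since $\mK^\star \in \mathbb{K} \cap \mathbb{K}_o$, Lemma~\ref{lemma.positive_P} guarantees $P_{\mK^\star}\in \mathbb{S}_{++}^{2n}$, so its diagonal block $P_{22}$ is likewise positive definite and invertible. Consequently the product $P_{22}X_{22}$ is invertible, and so is $P_{12}^\tr X_{12} = -P_{22}X_{22}$.

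Finally, $P_{12}^\tr$ and $X_{12}$ are both $n\times n$ square matrices, so the invertibility of their product $P_{12}^\tr X_{12}$ forces each factor to be invertible; in particular $X_{12}\in \mathrm{GL}_n$. This establishes the claimed necessary condition, and no further computation is needed.

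The only delicate point — and the step I would scrutinize most carefully — is the very first one: confirming that~\eqref{eq.relation_P_and_x} is genuinely available here without circularity. The concern is that Theorem~\ref{theorem.solution_expression} ultimately writes $T^\star = X_{22}X_{12}^{-1}$, which already presumes $X_{12}$ invertible; were~\eqref{eq.relation_P_and_x} obtained only after assuming $X_{12}\in\mathrm{GL}_n$, the present argument would beg the question. Inspecting that derivation shows it proceeds purely through the Lyapunov identities and the stationarity expressions~\eqref{eq.K_original}, yielding $P_{12}^\tr X_{12} + P_{22}X_{22}=0$ as an \emph{unconditional} identity at any observable stationary point; the inversion $X_{22}X_{12}^{-1}$ is then a downstream consequence rather than a hypothesis. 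With that verified, the factorization argument above closes the proof.
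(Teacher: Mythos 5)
Your proof is correct and takes essentially the same route as the paper, which likewise deduces the invertibility of $X_{12}$ directly from the identity $P_{12}^\tr X_{12} + P_{22}X_{22}=0$ in \eqref{eq.relation_P_and_x}. Your added details---that $P_{22}\succ 0$ and $X_{22}\succ 0$ make the right-hand side invertible, that a square factor of an invertible product is invertible, and that \eqref{eq.relation_P_and_x} is derived in Theorem~\ref{theorem.solution_expression} without presupposing $X_{12}\in\mathrm{GL}_n$---merely spell out what the paper leaves as ``easily observed.''
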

This can be seen from the proof of  \Cref{theorem.solution_expression}. More specifically, it follows from \eqref{eq.T_and_X}. The result indicates that designing an initial controller state $\xi_0$ correlated with the initial system state $x_0$ will facilitate learning the dynamic controller.

\section{Equivalence between \texttt{dLQR} and LQG}
\label{sec:lqr_lqg}

In this section, we show the equivalence between the optimal solutions of \texttt{dLQR} and LQG when the initial controller state $\xi_0$ in~\eqref{eq.dynamic_controller} satisfies a certain structural constraint. 

\subsection{Equivalence Analysis} \label{subsetion:problem-2}
Consider the following parameterization of $\xi_0$,
\begin{equation}
\label{eq.structure_constraint}
\xi_0=B_{\mK}s, 
\end{equation}
where $s\in \mathbb{R}^d$ is a random vector. To optimize both the dynamic controller and initial controller state, we provide a variant of the \texttt{dLQR} problem as follows. 
\begin{problem}[Policy optimization for \texttt{dLQR} when $\xi_0$ is a function of $\mK$]
\label{pro.dLQR_initial_estimate}
\begin{equation}  
\nonumber
\begin{aligned}
\min_{\mK} \quad&  J(\mK)\\
\text{\rm subject to} \quad &\mK\in \mathbb{K},
\end{aligned}
\end{equation}
where $J(\mK)$ is defined in~\eqref{eq.cost_in_P} and $\mathbb{K}$ is given in~\eqref{eq:stabilizing-K}. The initial controller state $\xi_0$ here satisfies the structural constraint \eqref{eq.structure_constraint}, where $s\in \mathbb{R}^d$ is randomly sampled from the distribution $\mathcal{D}_s$, which has zero-mean and is independent of the initial system state $x_0$. In this case, $X= \mathbb{E}_{\bar{x}_0\sim \bar{\mathcal{D}}}[\bar{x}_0\bar{x}_0^\tr]= \begin{bmatrix}
   X_{11}  & 0 \\
    0 & B_{\mK} V B_{\mK}^\tr
\end{bmatrix}$, where $V = \mathbb{E}_{s\sim \mathcal{D}_s}[ss^\tr]$ is fixed. 
\end{problem}

Although Problem \ref{pro.dLQR_initial_estimate} is formulated based on deterministic LTI systems, we will show that it is equivalent to the canonical LQG problem. Consider a discrete-time stochastic LTI system,
\begin{equation}   
\label{eq.lqg_system}
\begin{aligned}
x_{t+1} &= Ax_t+Bu_t+w_t,\\
y_t &= Cx_t+v_t,
\end{aligned}
\end{equation}
where $w_t \in \mathbb{R}^n$, $v_t \in \mathbb{R}^d$ represent system process and measurement noises. It is assumed that $w_t$ and $v_t$ are independent white Gaussian noises with intensity matrices $X_{11}$ and $V$. For completeness, we present the classical LQG problem, which is as follows.
\begin{problem}[Policy optimization for LQG]
\label{pro.LQG}
\begin{equation}  
\begin{aligned}
\nonumber
\min_{\mK} \quad & \lim_{N\rightarrow \infty} \frac{1}{N} \mathbb{E}_{x_0,w_t,v_t}\Big[\sum_{t=0}^{N-1}(x_t^\tr Qx_t+u_t^\tr Ru_t) \Big]\\
\text{\rm subject to}\quad  & \eqref{eq.lqg_system}, ~ \eqref{eq.dynamic_controller}, ~\mK\in \mathbb{K}.
\end{aligned}
\end{equation}
\end{problem}

It is clear that the LQG objective in Problem~\ref{pro.LQG} is an average cost in an infinite-time horizon $N \to \infty$, which focuses on the steady-state covariance only, i.e.,
$$
    \mathbb{E}_{x_0,w_t,v_t}\Big[x_\infty^\tr Qx_\infty+u_\infty^\tr Ru_\infty \Big].
$$
The transient behavior is neglected in the classical LQG problem. Instead, the \texttt{dLQR}~\eqref{eq.dLQR}  minimizes an infinite-horizon accumulated cost, in which the system transient behavior induced by initial system and controller states play an important role, as characterized in Lemma \ref{lemma.dLQR_cost}. 

\begin{proposition}
\label{proposition.identity}
If $X_{11} = \mathbb{E}[w_tw_t^\tr]$ and $V = \mathbb{E}[v_tv_t^\tr]$, then Problems \ref{pro.dLQR_initial_estimate} and \ref{pro.LQG} are equivalent in the sense that they have the same optimal solutions.
\end{proposition}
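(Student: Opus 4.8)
The plan is to show that the objective functions of Problems~\ref{pro.dLQR_initial_estimate} and~\ref{pro.LQG} coincide pointwise over the common feasible set $\mathbb{K}$; equality of the objectives over the same domain immediately yields identical minimizers. Both costs will be written as a trace against the weight $\begin{bmatrix} Q & 0 \\ 0 & C_{\mK}^\tr R C_{\mK}\end{bmatrix}$ of a matrix solving the \emph{same} discrete Lyapunov equation, and the crux is that the effective noise covariance of the closed-loop LQG system equals exactly the matrix $X$ prescribed in Problem~\ref{pro.dLQR_initial_estimate}.

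First I would form the closed-loop LQG system. Substituting $u_t = C_{\mK}\xi_t$ and $y_t = Cx_t + v_t$ into~\eqref{eq.lqg_system} and~\eqref{eq.dynamic_controller} gives $\bar{x}_{t+1} = (\bar{A}+\bar{B}\mK\bar{C})\bar{x}_t + \bar{w}_t$ with $\bar{w}_t := \begin{bmatrix} w_t \\ B_{\mK}v_t\end{bmatrix}$, i.e.\ the same closed-loop matrix as in~\eqref{eq.closed-loop-system_short} but now driven by stacked noise. Since $w_t$ and $v_t$ are independent zero-mean white noises with covariances $X_{11}$ and $V$, the off-diagonal cross-covariance vanishes and the covariance of $\bar{w}_t$ is $W = \begin{bmatrix} X_{11} & 0 \\ 0 & B_{\mK} V B_{\mK}^\tr\end{bmatrix}$, which is precisely the matrix $X$ specified in Problem~\ref{pro.dLQR_initial_estimate}.

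Next I would evaluate the LQG average cost. For a stabilizing $\mK\in\mathbb{K}$ the closed-loop matrix is Schur stable, so $\mathbb{E}(\bar{x}_t\bar{x}_t^\tr)$ converges to the unique steady state $\Sigma_\infty$ solving $\Sigma_\infty = W + (\bar{A}+\bar{B}\mK\bar{C})\Sigma_\infty(\bar{A}+\bar{B}\mK\bar{C})^\tr$, and a standard ergodic/averaging argument shows the infinite-horizon average cost in Problem~\ref{pro.LQG} equals ${\rm Tr}\!\left(\begin{bmatrix} Q & 0 \\ 0 & C_{\mK}^\tr R C_{\mK}\end{bmatrix}\Sigma_\infty\right)$. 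Comparing with Lemma~\ref{lemma.dLQR_cost}, the cost of Problem~\ref{pro.dLQR_initial_estimate} is ${\rm Tr}\!\left(\begin{bmatrix} Q & 0 \\ 0 & C_{\mK}^\tr R C_{\mK}\end{bmatrix}\Sigma_{\mK}\right)$ with $\Sigma_{\mK}$ solving~\eqref{eq.lyapunov_equation_sigma} under the same $X = W$. Since both Lyapunov equations have identical data $(\bar{A}+\bar{B}\mK\bar{C},\,X)$ and, by Lemma~\ref{lemma.Lyapunov_stability}(a), a unique positive semidefinite solution, we obtain $\Sigma_{\mK} = \Sigma_\infty$ and hence $J(\mK) = J_{\rm LQG}(\mK)$ for every $\mK\in\mathbb{K}$, from which the equivalence of optimal solutions follows.

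The main obstacle I anticipate is the careful justification of the averaging step: one must argue that the Ces\`aro-averaged stage cost along the noisy trajectory converges to ${\rm Tr}(\cdot\,\Sigma_\infty)$, with the (vanishing) contribution of the initial condition washed out in the limit, and confirm that the independence of $w_t$ and $v_t$ genuinely annihilates the off-diagonal blocks so that $W = X$ holds exactly. A secondary point worth stating explicitly is that $X$ here legitimately depends on $B_{\mK}$ through $B_{\mK} V B_{\mK}^\tr$, mirroring how the measurement noise $v_t$ enters the LQG closed loop via $B_{\mK}$; this $\mK$-dependence of $X$ is exactly what aligns the two problems rather than breaking the parallel with Problem~\ref{pro.dLQR}.
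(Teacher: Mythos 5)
Your proposal is correct and takes essentially the same approach as the paper: the paper's proof simply invokes the LQG cost characterization of \cite{zheng2021analysis} (its Lemma D.1) together with the definition of Problem \ref{pro.dLQR_initial_estimate}, and your argument amounts to deriving that cited characterization explicitly---stacking the noises into $\bar{w}_t = \begin{bmatrix} w_t^\tr & (B_{\mK}v_t)^\tr\end{bmatrix}^\tr$, identifying its covariance with the $X$ of Problem \ref{pro.dLQR_initial_estimate}, and matching the unique solutions of the two identical Lyapunov equations. The pointwise equality of the two objectives over $\mathbb{K}$ is precisely the content the paper outsources to the citation, so your version is just a self-contained rendering of the same argument.
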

\begin{proof}
From the definition of Problem \ref{pro.dLQR_initial_estimate} and the characterization of the cost function for the LQG problem in \cite[Lemma D.1]{zheng2021analysis}, the cost function in Problem \ref{pro.dLQR_initial_estimate} (see \Cref{lemma.dLQR_cost}) is the same as the LQG cost in Problem~\ref{pro.LQG}. Also, they have the same feasible region, as characterized by the set of stabilizing controllers in \eqref{eq:stabilizing-K}. Therefore, Problems \ref{pro.dLQR_initial_estimate} and \ref{pro.LQG} are equivalent and they have the same optimal solutions.
\end{proof}

We note that the equivalence between Problem \ref{pro.dLQR_initial_estimate} and the corresponding LQG problem reveals the interesting correspondence between the optimal control for deterministic and stochastic LTI systems.

\subsection{Structure of Minimal Stationary Points}
We refer to \eqref{eq.dynamic_controller} as a  minimal  controller if it is a minimal realization of its transfer function. This is equivalent to the case that \eqref{eq.dynamic_controller} is a reachable and observable system. We denote the set of minimal controllers as 
$$
\mathbb{K}_m := \left\{ \begin{bmatrix}
0_{m\times d} & C_{\mK} \\
B_{\mK} & A_{\mK}
\end{bmatrix}: \begin{aligned} &(C_{\mK},A_{\mK}) \text{ is observable}  \\ &(A_{\mK}, B_{\mK}) \text{ is reachable} \end{aligned} \right\}.
$$

Different from Problem \ref{pro.dLQR},  $X$ is only required to be positive semidefinite in Problem \ref{pro.dLQR_initial_estimate} (since both $X_{11}$ and $B_{\mK}VB_{\mK}^\tr$ can be of low rank). Therefore, similar to Lemma \ref{lemma.positive_P}, the reachability of $(A_{\mK}, B_{\mK})$ is required to guarantee the positive definiteness of $\Sigma_{\mK}$. This means that we have $P_{\mK},\; \Sigma_{\mK}\in \mathbb{S}_{++}^{2n}$ if $\mK\in \mathbb{K}_m$, which will be utilized in the following analysis.

\begin{theorem}
\label{theorem.optimal_equivalence}
Suppose $C$ has full row rank,  $(A, {X_{11}}^{\frac{1}{2}})$ is reachable, $V \in \mathbb{S}_{++}^d$, and Assumption \ref{assumption.control_observe} holds. All minimal stationary points $\mK^\star \in  \mathbb{K}\cap\mathbb{K}_m \cap \mathbb{K}_s $ to Problem \ref{pro.dLQR_initial_estimate} are globally optimal, and they are in the form of 
\begin{equation}
\label{eq.optimal_K_LQG}
{\mK}^\star=\mathscr{T}_T({\mK}^\ddagger),
\end{equation}
where 
\begin{equation}
\label{eq.dlqr_2_cost}
\mK^{\ddagger}:=\begin{bmatrix}
  0   & -K^\star  \\
  L^\star   & A-BK^\star-L^\star C
\end{bmatrix},
\end{equation}
$T\in \mathrm{GL}_n$ is an arbitrary invertible matrix, $K^\star$ is defined in \eqref{eq.optimal_K}, and
\begin{equation}
\nonumber
L^\star = A{\hat{\Sigma}}C^\tr (C{\hat{\Sigma}}C^\tr + V)^{-1},
\end{equation}
with ${\hat{\Sigma}}$ being the unique positive definite solution to the following Riccati equation 
\begin{equation}
\label{eq.riccati_KF}
{\hat{\Sigma}}=X_{11}+A {\hat{\Sigma}} A^\tr - A {\hat{\Sigma}}C^\tr (C{\hat{\Sigma}}\bar{C}^\tr+ V)^{-1} C {\hat{\Sigma}} A^\tr.
\end{equation}
\end{theorem}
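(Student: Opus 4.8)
The plan is to reduce Problem \ref{pro.dLQR_initial_estimate} to the canonical LQG problem through Proposition \ref{proposition.identity} and then transport the known LQG landscape results of \cite{tang2021analysis,zheng2021analysis}, while separately accounting for the similarity-transformation symmetry that is restored by the structural constraint \eqref{eq.structure_constraint}. The starting observation is that, unlike Problem \ref{pro.dLQR}, here the weighting matrix $X$ is no longer independent of the controller: the constraint $\xi_0=B_{\mK}s$ forces $X_{12}=0$ and $X_{22}=B_{\mK}VB_{\mK}^\tr$, so $X$ varies with $B_{\mK}$. This is exactly the singular case $X_{12}=0$ flagged in Corollary \ref{corollary.existence}. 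Consequently the gradient expression of Lemma \ref{lemma:gradient} and the stationary-point derivation of Theorem \ref{theorem.solution_expression} cannot be reused verbatim, since both were derived for a fixed $X$; the argument should instead proceed through the exact equivalence of cost functions rather than by differentiating the $X$-dependent cost by hand.

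First I would invoke Proposition \ref{proposition.identity}, which via \cite[Lemma D.1]{zheng2021analysis} identifies the \texttt{dLQR} cost $J(\mK)$ of Problem \ref{pro.dLQR_initial_estimate} with the LQG cost of Problem \ref{pro.LQG} for every $\mK\in\mathbb{K}$, under the noise intensities $X_{11}=\mathbb{E}(w_tw_t^\tr)$ and $V=\mathbb{E}(v_tv_t^\tr)$. Since the two objectives coincide as functions of $\mK$, they share the same gradient, the same stationary set $\mathbb{K}_s$, and the same global minimizers. I would then import the LQG characterization of \cite{tang2021analysis,zheng2021analysis}: every stationary point that is a minimal (reachable and observable) controller is globally optimal, and all such minimizers lie on a single orbit of the similarity transformation $\mathscr{T}_T$. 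The hypotheses that $(A,X_{11}^{1/2})$ is reachable, $V\in\mathbb{S}^d_{++}$, together with Assumption \ref{assumption.control_observe}, are exactly what guarantee that the filtering Riccati equation \eqref{eq.riccati_KF} and the control Riccati equation \eqref{eq.P_riccati} each admit a unique positive definite solution, so this characterization applies; here the reachability of $(A,X_{11}^{1/2})$ plays the dual role to observability in Theorem \ref{theorem.solution_expression}, ensuring $\hat{\Sigma}\succ0$ and the invertibility needed below.

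Next, to pin down the explicit form \eqref{eq.optimal_K_LQG}, I would appeal to the separation structure of the LQG optimum: the globally optimal controller is the observer-based controller built from the steady-state Kalman gain $L^\star=A\hat{\Sigma}C^\tr(C\hat{\Sigma}C^\tr+V)^{-1}$, with $\hat{\Sigma}$ solving \eqref{eq.riccati_KF}, and the LQR gain $K^\star$ of \eqref{eq.optimal_K} solving \eqref{eq.P_riccati}; this yields $\mK^\ddagger$ in \eqref{eq.dlqr_2_cost}. To establish that the full minimizing set is $\{\mathscr{T}_T(\mK^\ddagger):T\in\mathrm{GL}_n\}$ with $T$ arbitrary, I would verify cost invariance directly under the constraint \eqref{eq.structure_constraint}: applying $\mathscr{T}_T$ sends $B_{\mK}\mapsto TB_{\mK}$, hence $X_{22}\mapsto TX_{22}T^\tr$, while \eqref{eq.similarity_P} gives $P_{\mathscr{T}_T(\mK)}=\bar{T}^{-\tr}P_{\mK}\bar{T}^{-1}$. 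Because $X_{12}=0$, a block computation shows that ${\rm Tr}(P_{\mathscr{T}_T(\mK)}X')$, with $X'$ the weighting induced by the transformed controller, equals ${\rm Tr}(P_{11}X_{11})+{\rm Tr}(P_{22}X_{22})=J(\mK)$, so the two transformations cancel in the trace and the cost is invariant. This is the structural reason that the unique optimal $T$ of Theorem \ref{theorem.optimal_tansformation} collapses to a free parameter here.

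I expect the main obstacle to be the careful handling of the controller-dependent weighting $X$: one must resist applying the fixed-$X$ gradient of Lemma \ref{lemma:gradient} and instead let the exact LQG equivalence of Proposition \ref{proposition.identity} supply the correct stationarity conditions, and one must confirm that reachability of $(A,X_{11}^{1/2})$ together with $V\succ0$ simultaneously guarantees positive definiteness of $\hat{\Sigma}$, well-posedness of \eqref{eq.riccati_KF}, and positive definiteness of $\Sigma_{\mK}$ for minimal controllers, the last being needed so that the submatrix inversions implicit in the minimal stationary-point analysis remain valid.
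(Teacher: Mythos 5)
Your proposal is correct, but it takes a genuinely different route from the paper on the structural half of the theorem. The paper does exactly what you advise against—differentiating the controller-dependent cost—except it does so correctly: the first step of its proof is the \emph{corrected} gradient
\begin{equation*}
\nabla_{B_{\mK}} J(\mK) = 2\left(P_{12}^\tr A \Sigma_{11} C^\tr + P_{22}B_{\mK}(C\Sigma_{11} C^\tr + V)\right) + 2(P_{12}^\tr B C_{\mK}+P_{22}A_{\mK})\Sigma_{12}^\tr C^\tr,
\end{equation*}
whose extra term $2P_{22}B_{\mK}V$ is precisely the contribution of $X_{22}=B_{\mK}VB_{\mK}^\tr$ (so Lemma \ref{lemma:gradient} is not applied verbatim, as you rightly warn). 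It then notes that reachability of $(A,X_{11}^{1/2})$ and of $(A_{\mK},B_{\mK})$ gives $\Sigma_{\mK}\succ 0$, and repeats the stationary-point manipulations of Theorem \ref{theorem.solution_expression} to obtain the form \eqref{eq.optimal_K_LQG}; the LQG equivalence (Proposition \ref{proposition.identity}) and \cite[Theorem D.4]{zheng2021analysis} are invoked only for the global-optimality claim. You instead delegate everything to the reduction: the pointwise identity of the Problem \ref{pro.dLQR_initial_estimate} cost with the LQG cost gives identical gradients, stationary sets, and minimizers, after which the full landscape theorem of \cite{zheng2021analysis} is imported for both the observer-based form and its optimality, supplemented by your direct block-trace verification that the cost is constant along the orbit $\{\mathscr{T}_T(\mK)\}$ when $X_{12}=0$. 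Both arguments are valid and both ultimately lean on \cite{zheng2021analysis} for optimality; the paper's hybrid route is more self-contained on the stationarity side and exhibits exactly how the landscape changes between Problems \ref{pro.dLQR} and \ref{pro.dLQR_initial_estimate}, whereas your pure reduction is shorter, and your cancellation ${\rm Tr}(T^{-\tr}P_{22}T^{-1}\,TX_{22}T^\tr)={\rm Tr}(P_{22}X_{22})$ is a cleaner explanation of why the unique optimal $T^\star$ of Theorem \ref{theorem.optimal_tansformation} degenerates into a free parameter here. One caution on your import: the results of \cite{zheng2021analysis} are proved under their own noise assumptions, so you must check they survive with $(A,X_{11}^{1/2})$ merely reachable rather than $X_{11}\succ 0$; the paper's citation of \cite[Theorem D.4]{zheng2021analysis} carries the same burden, so this is a shared caveat rather than a gap specific to your argument.
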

\begin{proof}
The key point of this proof is that the gradient of the cost of Problem \ref{pro.dLQR_initial_estimate} w.r.t. $B_{\mK}$, i.e.,  $\nabla_{B_{\mK}} J(\mK)$, is different from that of Problem \ref{pro.dLQR}. 

In particular, for Problem \ref{pro.dLQR_initial_estimate}, we get
\begin{equation}
\nonumber
\begin{aligned}
\nabla_{B_{\mK}} J(\mK)
&= 2\left(P_{12}^\tr A \Sigma_{11} C^\tr  +P_{22}B_{\mK}(C\Sigma_{11} C^\tr +V)\right)\\
&\quad + 2(P_{12}^\tr B C_{\mK}+P_{22}A_{\mK})\Sigma_{12}^\tr C^\tr.
\end{aligned}
\end{equation}
Similar to Lemma \ref{lemma.positive_P}, when $(A, {X_{11}}^{\frac{1}{2}})$ and $(A_{\mK^\star}, B_{\mK^\star})$ are both reachable, $\Sigma_{\mK^\star} \in \mathbb{S}_{++}^{2n}$. Following similarly to the proof of Theorem \ref{theorem.solution_expression}, we can show that all minimal stationary points in Problem \ref{pro.dLQR_initial_estimate} are in the form of \eqref{eq.optimal_K_LQG}. Since the controller from the Riccati equations \eqref{eq.P_riccati} and \eqref{eq.riccati_KF} gives the globally optimal LQG controller \cite{zhou1996robust}, we complete the proof by invoking the equivalence in \Cref{proposition.identity}. 
\end{proof}

Note that the observer gain $L^\star$ in this theorem equals the Kalman gain of discrete-time LQG. Theorem \ref{theorem.optimal_equivalence} suggests that Problem \ref{pro.dLQR_initial_estimate} enjoys good properties of symmetry induced by similarity transformations and global optimality of minimal stationary points. Different from the results in Theorem \ref{theorem.solution_expression}, the minimal stationary points of Problem \ref{pro.dLQR_initial_estimate} are not unique, and these points are identical up to a similarity transformation. The main reason for the difference is that the structure of the initial controller state of Problem \ref{pro.dLQR_initial_estimate} induces the invariance of the cost under the similarity transformations.  Thus utilizing the existing LQG results  \cite{zhou1996robust}, it can be further shown that all minimal stationary points of Problem \ref{pro.dLQR_initial_estimate} are globally optimal. 

We provide Examples \ref{example:3} and \ref{example:4} to illustrate the \texttt{dLQR} cost under the setting of Problem \ref{pro.dLQR_initial_estimate}, which show the invariance of the cost under similarity transformations.  

\begin{figure}[t]
\centering
\captionsetup{singlelinecheck = false,labelsep=period, font=small}
\captionsetup[subfigure]{justification=centering}
\subfloat[]{\label{fig:example_3}\includegraphics[width=0.7\linewidth]{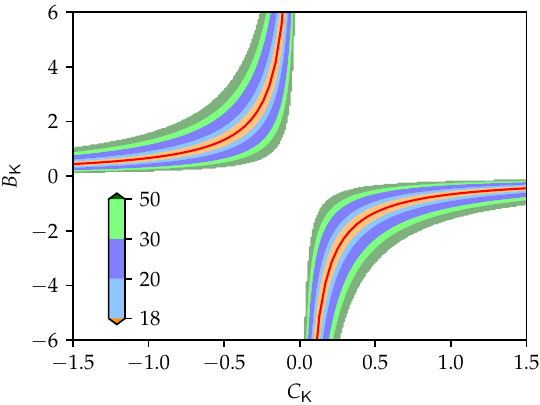}}\\
\subfloat[]{\label{fig:example_4}\includegraphics[width=0.7\linewidth]{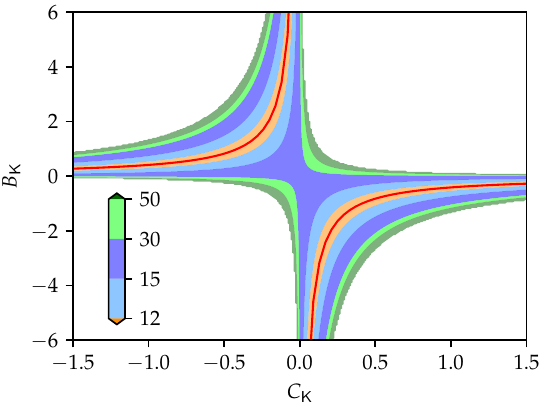}}\\
\caption{\texttt{dLQR} cost of Examples \ref{example:3} and \ref{example:4}. (a) \texttt{dLQR} cost for system in Example \ref{example:3} when fixing $A_{\mK}=-0.547$. The red curve represents the set of globally optimal points  $\{(B_{\mK},C_{\mK})|B_{\mK}=0.703T, C_{\mK}=-0.944/T, T\neq 0\}$. (b) \texttt{dLQR} cost for system in Example \ref{example:4} when fixing $A_{\mK}=-0.403$. The red curve represents the set of globally optimal points  $\{(B_{\mK},C_{\mK})|B_{\mK}=0.538T, C_{\mK}=-0.765/T, T\neq 0\}$. }
\label{f:cost_lqg}
\end{figure}

\begin{example}
\label{example:3}
Consider the system in Example \ref{example:1}.  To define \texttt{dLQR}~\eqref{eq.dLQR}, we choose  
\begin{equation} \label{eq:exampleX2}
X=\mathbb{E}_{\bar{x}_0\sim \bar{\mathcal{D}}} \; [\bar{x}_0\bar{x}_0^\tr] = \begin{bmatrix}
1&0\\
0&B_{\mK} V B_{\mK}^\tr
\end{bmatrix},
\end{equation}
with $V=1$. Theorem~\ref{theorem.optimal_equivalence} implies that all minimal stationary points are globally optimal, which are identical up to a similarity transformation. Fig. \ref{fig:example_3} demonstrates this fact. \hfill $\square$ 
\end{example}

\begin{example}
\label{example:4}
Consider the system in Example \ref{example:2}. To define \texttt{dLQR}~\eqref{eq.dLQR}, we choose $X$ as~\eqref{eq:exampleX2}. Again, all minimal stationary points are globally optimal, which are identical up to a similarity transformation, shown in Fig. \ref{fig:example_4}.    \hfill $\square$
\end{example}

We can also extend Theorem \ref{theorem.optimal_equivalence} to a more general case where $s$ in \eqref{eq.structure_constraint} can be correlated with $x_0$; interested readers can refer to Appendix \ref{appendix.general_equi} for details. We finally provide the following remark highlighting the importance of initial controller states for~\eqref{eq.objective} when using dynamic output-feedback policies.

\begin{remark}[Design of initial controller states]
In practice, if a correlated initial controller state, such that the cross-correlation matrix $X_{12}=\mathbb{E}[x_0\xi_0^\tr]$ is nonsingular, can be obtained based on the prior information and output observation, Problem \ref{pro.dLQR} usually yields a better dynamic controller than Problem \ref{pro.dLQR_initial_estimate}. For example, Examples \ref{example:1} and \ref{example:3} share the same system parameters and initial system state correlation ($X_{11}=1$) for~\eqref{eq.objective}, while the minimum cost of Example \ref{example:1} (which is 11.914) is smaller than Example \ref{example:3} (17.156). Similarly, Example \ref{example:2} (9.363) has a smaller minimum cost than Example \ref{example:4} (11.504). On the other hand, if the prior information for the design of the initial controller state is limited, Problem \ref{pro.dLQR_initial_estimate} might be more suitable considering the global optimality of minimal stationary points and the invariance property of similarity transformations.
\hfill $\square$
\end{remark}

\section{Numerical Experiments}
\label{sec:numerical_experi}
We have illustrated our main results on the structure of stationary controllers in previous sections, which are crucial for establishing a certificate of optimality for policy gradient methods. Here, we present some numerical experiments to demonstrate the empirical performance of policy gradient methods for solving the \texttt{dLQR} problem under the setting of Problems \ref{pro.dLQR} and \ref{pro.dLQR_initial_estimate}.

We consider the vanilla policy gradient method (known as the gradient descent method). 
As described in Corollary \ref{corollary.globle_optimal}, upon giving an initial stabilizing controller
$\mK_0\in\mathbb{K}$, we update the controller using
\begin{equation}
\label{eq.update_rule}
\mK_{i+1}=\mK_{i}-\alpha_i \nabla_{\mK_i} J(\mK_i),
\end{equation}
until the gradient satisfies $\|\nabla_{\mK_i} J(\mK_i)\|_F\le \epsilon$ or the algorithm reaches $i_{\rm max}$ iterations \footnote{The code for numerical experiments is available at \url{https://github.com/soc-ucsd/LQG_gradient/tree/master/dLQR}}. 
Similar to~\cite{zheng2021analysis}, the learning rate $\alpha_i$ is determined by the Armijo rule \cite[Chapter 1.3]{bertsekas1997nonlinear}: Set $\alpha_i=1$, repeat $\alpha_i=\beta\alpha_i$ until 
$$J(\mK_i)-J(\mK_{i+1})\ge \theta\alpha_i\|\nabla_{\mK_i} J(\mK_i)\|_F^2,$$
where $\beta \in (0,1), \; \theta\in (0,1)$.  In this paper, we set $\beta=0.5$ and $\theta=0.01$.

\begin{figure}[t]
\centering
\captionsetup{singlelinecheck = false,labelsep=period, font=small}
\captionsetup[subfigure]{justification=centering}
\subfloat[]{\label{fig:curve_p1}\includegraphics[width=0.5\linewidth]{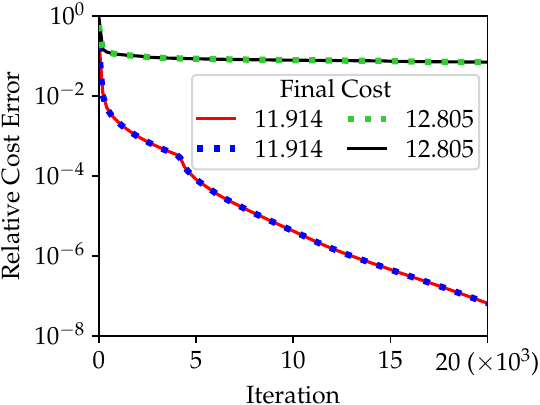}}
\subfloat[]{\label{fig:curve_p2}\includegraphics[width=0.5\linewidth]{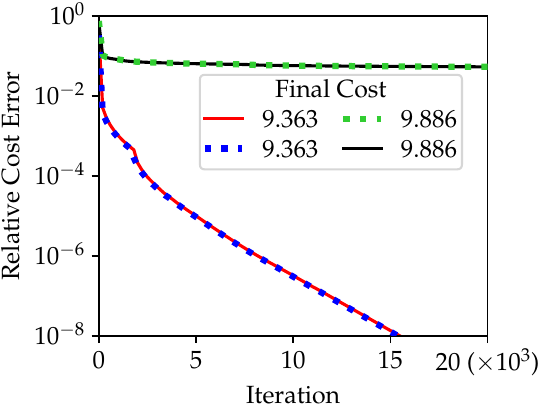}}
\\
\subfloat[]{\label{fig:curve_p3}\includegraphics[width=0.5\linewidth]{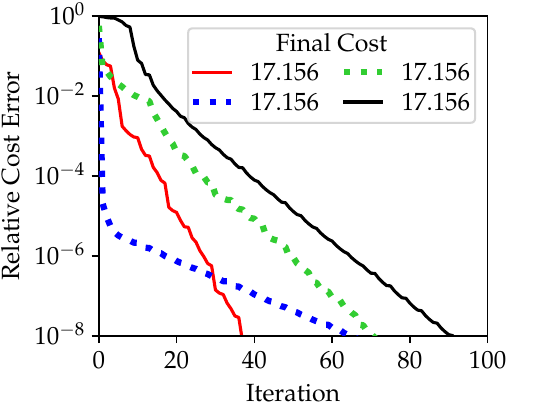}}
\subfloat[ ]{\label{fig:curve_p4}\includegraphics[width=0.5\linewidth]{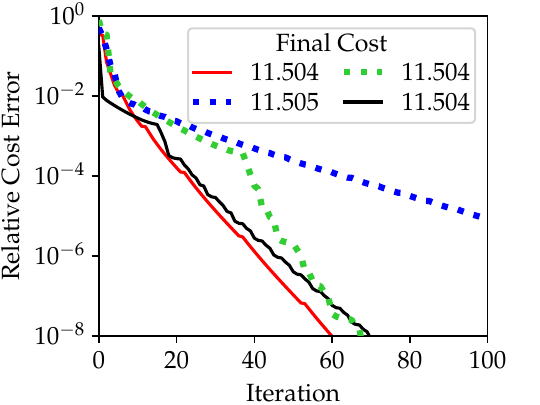}}
\caption{Learning curves of Examples \ref{example:1}-\ref{example:4} with four different random initialization (corresponding to curves with different colors). (a) Learning curves of Example \ref{example:1} with $J({\mK}^\star)=11.914$ and $i_{\max}=2\times 10^4$. (b) Learning curves of Example \ref{example:2} with $J({\mK}^\star)=9.363$  and $i_{\max}=2\times 10^4$. (c) Learning curves of Example \ref{example:3} with $J({\mK}^\star)=17.156$ and $i_{\max}=100$.  (d) Learning curves of Example \ref{example:4} with $J({\mK}^\star)=11.504$ and $i_{\max}=100$.}
\label{f:learning_curve}
\end{figure}

\subsection{Performance on Examples \ref{example:1}-\ref{example:4}}
\label{sec:result_e1-4}
Fig. \ref{f:learning_curve} (a)-(d) show the normalized cost error during the learning process of Examples \ref{example:1}-\ref{example:4}, which is computed as $|J(\mK)-J(\mK^{\star})|/J(\mK)$. Recall that Examples \ref{example:1} and \ref{example:3} are two different characterizations of the original \texttt{dLQR} problem \eqref{eq.dLQR} with the same dynamics, initial system state distribution, and performance matrices ($Q$ and $R$). These two examples also use the same random initial points; curves of the same color start from the same initial point. We used the same setup for Examples \ref{example:2} and \ref{example:4}.

The final cost marked in this figure represents the cost value of the $i_{\rm max}$-th iterate. The convergence speed of Problem \ref{pro.dLQR_initial_estimate} (Examples \ref{example:3} and \ref{example:4}) is significantly faster than that of Problem \ref{pro.dLQR} (Examples \ref{example:1} and \ref{example:2}). In particular, all runs of Problem \ref{pro.dLQR_initial_estimate} converge within 100 iterations. The reason might be that for Problem \ref{pro.dLQR_initial_estimate}, all similarity transformations of $\mK^\ddagger$ in \eqref{eq.dlqr_2_cost} are globally optimal points, the gradient descent method can quickly converge to a certain globally optimal point that is closer to the initial controller. 

Instead, for Problem \ref{pro.dLQR} (see Fig. \ref{fig:curve_p1} and \ref{fig:curve_p2}), the gradient descent method may not converge to a fixed controller point within 20000 iterations. For example, the green and black curves of Fig. \ref{fig:curve_p1} and \ref{fig:curve_p2} do not converge to the optimal point, whose final iterate has a nonzero gradient since the limiting performance of these runs occurs when $B_{\mK}\rightarrow -\infty$ and $C_{\mK}\rightarrow 0$. This also demonstrates that the observable stationary point of Problem \ref{pro.dLQR} is unique. 

From Fig. \ref{fig:curve_p1} and \ref{fig:curve_p3}, we can observe that even the non-convergent curves of Example \ref{example:1} learn a dynamic controller that performs better than the optimal solution of Example \ref{example:3}. In particular, the final cost of green and black curves in Fig. \ref{fig:curve_p1} is about 25.4\% less than the optimal cost of Example \ref{example:3}. Similar results also hold between Examples \ref{example:2} and \ref{example:4}. This supports that \textit{designing an initial controller state $\xi_0$ correlated with the initial system state $x_0$, such that the cross-correlation matrix $X_{12}=\mathbb{E}[x_0\xi_0^\tr]$ is nonsingular, will facilitate learning a good dynamic controller}. 

\subsection{Two-dimensional examples}
In this section, we further illustrate our theoretical analysis with two 2-dimensional examples. 
\begin{example}
\label{example:5}
Consider a 2-dimensional example with 
$$A=\begin{bmatrix}
 1&\frac{1}{20}\\
 0&1
\end{bmatrix},\;B=\begin{bmatrix}
0\\
\frac{1}{20}
\end{bmatrix},\; C=\begin{bmatrix}
1\\0 
\end{bmatrix}^\tr,\;Q=5\begin{bmatrix}
1&1\\1&1 
\end{bmatrix}$$ 
and $R=1$. To define \texttt{dLQR}~\eqref{eq.dLQR}, we choose
\begin{equation} 
\nonumber
X=\mathbb{E}_{\bar{x}_0\sim \bar{\mathcal{D}}} \; [\bar{x}_0\bar{x}_0^\tr] = \begin{bmatrix}
0.2&0&0.05&0\\
0&0.8&0&0.05\\
0.05&0&0.2&0\\
0&0.05&0&0.8\\
\end{bmatrix}.
\end{equation}
By Theorem \ref{theorem.solution_expression}, the unique observable stationary point can be identified as
\begin{equation} 
\nonumber
{\mK}^\star = \begin{bmatrix}
0&-0.518&-0.183\\
4.392&-0.098&0.013\\
31.329&-8.247&0.854
\end{bmatrix}.
\end{equation}
\end{example}

\begin{example}
\label{example:6}
Consider the system in Example \ref{example:5}.  To define \texttt{dLQR}~\eqref{eq.dLQR}, we choose  
\begin{equation}
\nonumber
X=\mathbb{E}_{\bar{x}_0\sim \bar{\mathcal{D}}} \; [\bar{x}_0\bar{x}_0^\tr] = 
\begin{bmatrix}
\begin{matrix} 
0.2 & 0 \\ 0 & 0.8 \end{matrix} & \quad \text{\rm \large0}_{2\times 2} &\quad \\ \text{\rm \large0}_{2\times 2} & \quad \bigestimate &\quad
\end{bmatrix}
\end{equation}
with $V=1$. Theorem~\ref{theorem.optimal_equivalence} implies that all minimal stationary points identical up to a similarity transformation are globally optimal, which is 
\begin{equation} 
\nonumber
{\mK}^\star =\mathscr{T}_T({\mK}^\ddagger)
\end{equation}
with 
\begin{equation}
\nonumber
{\mK}^\ddagger=\begin{bmatrix}
0&-2.073& -2.920\\
0.448&0.552&0.050 \\
0.685&-0.788&  0.854
\end{bmatrix}.
\end{equation}
\end{example}

\begin{figure}[t]
\centering
\captionsetup{singlelinecheck = false,labelsep=period, font=small}
\captionsetup[subfigure]{justification=centering}
\subfloat[]{\label{fig:curve_p5}\includegraphics[width=0.5\linewidth]{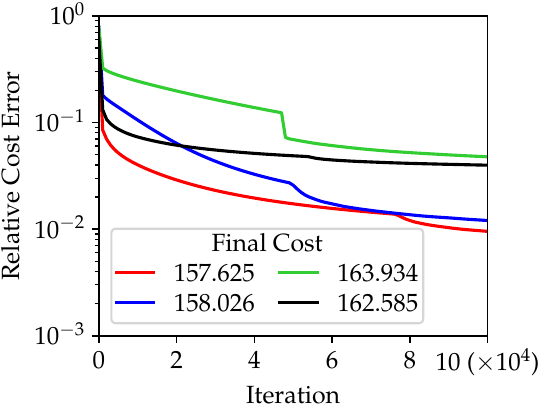}}
\subfloat[]{\label{fig:curve_p6}\includegraphics[width=0.5\linewidth]{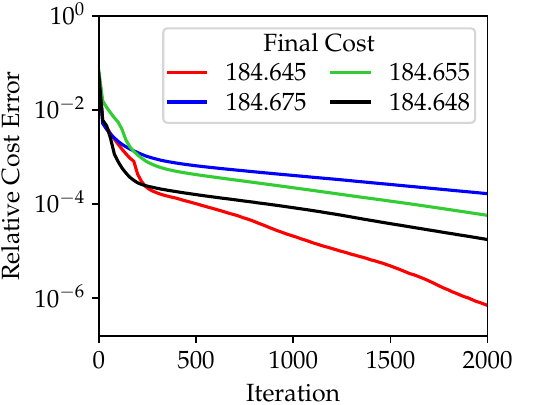}}
\caption{Learning curves of Examples \ref{example:5}-\ref{example:6} with four different random initialization. (a) Learning curves of Example \ref{example:5} with $J({\mK}^\star)=156.123$ and $i_{\max}=10^5$. (b) Learning curves of Example \ref{example:6} with $J({\mK}^\star)=184.645$ and $i_{\max}=2000$.}
\label{f:learning_curve_2}
\end{figure} 

Similar to the numerical results obtained for the one-dimensional examples, \texttt{dLQR} under the setting of Problem \ref{pro.dLQR_initial_estimate} converges faster than Problem \ref{pro.dLQR}; see Fig. \ref{f:learning_curve_2}. Actually, the learning curves of Example \ref{example:5} did not converge within $10^5$ iterations, although their final cost values were even smaller than the optimal cost of Example \ref{example:6}.  Overall, our numerical results indicate that Problem \ref{pro.dLQR_initial_estimate} enjoys faster convergence due to the global optimality of minimal stationary points, and that Problem \ref{pro.dLQR} may yield a better-limiting performance if we can design reasonable initial controller states. Further theoretical analysis is left for future work. 

 \balance
\section{Conclusion}
\label{sec:conclusion}
In this paper, we have analyzed the optimization landscape of linear quadratic control problems that use dynamic output-feedback policies. We have shown that the \texttt{dLQR} cost varies with similarity transformations, and identified the structure of the optimal similarity transformation of an observable stabilizing controller. More importantly, we characterized the stationary point of the policy gradient optimization and proved that the associated dynamic controller is unique if it is observable. This result provides a certificate of optimality for the solution of policy gradient methods.  We proved that the optimal solution of \texttt{dLQR} is equivalent to LQG when the initial controller state satisfies a certain structural constraint. In this case, all minimal stationary points are globally optimal, and they are identical up to a similarity transformation. 

Our work brings new insights for understanding the policy gradient algorithms for solving partially observed control or decision-making problems. Future work includes designing model-free dynamic controller learning methods, establishing convergence conditions for policy gradient algorithms, and investigating the global optimality and existence conditions of the observable stationary point in general cases. 

\appendix

In this appendix, we provide some auxiliary proofs and additional derivations for some results in the main text. 

\subsection{Proof of Lemma \ref{lemma.positive_P}}
\label{sec:proof_positive}
\begin{proof}
It is easy to show that 
$$\bar{Q} + \bar{C}^\tr \mK^\tr \bar{R}\mK\bar{C}=D^\tr D,$$
where $D:={\rm diag}
 (Q^{\frac{1}{2}}, R^{\frac{1}{2}}C_{\mK})$. 
By Lemma \ref{lemma.Lyapunov_stability}(c), $P_{\mK} \in  \mathbb{S}_{++}^{2n}$ if $(D,\bar{A}+\bar{B}\mK\bar{C})$ is observable. 
This is equivalent to the eigenvalues of
\begin{equation}
\label{eq.matrix_observability}
\begin{bmatrix}
 A+Z_{11}Q^{\frac{1}{2}}    & BC_{\mK} + Z_{12}R^{\
 \frac{1}{2}}C_{\mK} \\
B_{\mK}C + Z_{21}Q^{\frac{1}{2}}    & A_{\mK}+Z_{22}R^{\
 \frac{1}{2}}C_{\mK}
\end{bmatrix}
\end{equation}
should be freely assigned by choosing $Z_{11}$, $Z_{12}$, $Z_{21}$, and $Z_{22}$.  

Let $Z_{12}=-BR^{-\frac{1}{2}}$, we can easily show that the eigenvalues of \eqref{eq.matrix_observability} can be arbitrarily assigned if $(Q^{\frac{1}{2}},A)$ and $(C_{\mK},A_{\mK})$ are both observable.  
Thus, it completes the proof.
\end{proof}

\subsection{Non-existence of the optimal similarity transformation}
\label{appendix.non-existence}
We take a one-dimensional system as an example (i.e., $x_t$ and $\xi_t$ are scalars), to show the non-existence of the optimal similarity transformation if $X_{12}$ is singular. 
Under similarity transformation~\eqref{eq:similarity-transformation}, we have 
\begin{equation}
\label{eq.u_sequence}
\begin{aligned}
u_0 &= C_{\mK}T^{-1}\xi_0, \\
u_1 &= C_{\mK}T^{-1}( TA_{\mK}T^{-1}\xi_0 + TB_{\mK}y_0) \\
& = C_{\mK}A_{\mK}T^{-1}\xi_0 + C_{\mK}B_{\mK}y_0.
\end{aligned}
\end{equation}
Given an observable stabilizing controller $\mK$, by \eqref{eq.optimal_tranforsmation} of  \Cref{proposition.optimal_tansformation}, one has 
\begin{equation}
\label{eq.T_limit}
\lim_{X_{12}\rightarrow 0} (T^\star)^{-1} = \lim_{X_{12}\rightarrow 0} - P_{22}^{-1}P_{12}^\tr X_{12}X_{22}^{-1} =  0.
\end{equation}
Note that the cross-correlation value $X_{12}=0$ if the initial controller state $\xi_0$ is zero-mean and independent of the initial system state $x_0$. Using \eqref{eq.T_limit} in \eqref{eq.u_sequence}, we can observe that the controller input $u_t$ tends to ignore the influence of $\xi_0$ by increasing $T$ in this one-dimensional instance. This is because the initial controller state provides no information for the estimation of the initial system state if $X_{12}=0$.

\subsection{Proof of Proposition \ref{proposition.solution_of_riccati}}
\label{sec.proof_of_riccati}
\begin{proof}
Define $\phi_{L_i}:=A-L_iC$.  Let $\hat{\Sigma}_{L_i}$, $i=0,1,2,\cdots$, be the solutions of the equation
\begin{equation}
\label{eq.lyapunov_sigma_mini}
\hat{\Sigma}_{L_i}=\Delta_X  + \phi_{L_i}\hat{\Sigma}_{L_i}\phi_{L_i}^\tr ,
\end{equation}
where 
\begin{equation}
\label{eq.PIM}
L_{i+1} = A \hat{\Sigma}_{L_i} C^\tr (C\hat{\Sigma}_{L_i} C^\tr)^{-1},
\end{equation}
$L_0 \in \mathbb{L}$, and $\Delta_X$ is defined in \eqref{eq.definition_delta_X}. Next, we will show that \eqref{eq.PIM} is well-defined and 
\begin{equation}
\label{eq.nonincreasing_sigma}
\hat{\Sigma}_{L_0} \succeq \hat{\Sigma}_{L_1} \succeq \cdots \succeq \hat{\Sigma}_{L_\infty} \succ 0.
\end{equation}
Note that \eqref{eq.nonincreasing_sigma} leads to a monotonically non-increasing sequence ${\rm Tr}(\hat{\Sigma}_{L_i})$.

Since $X\succ 0$, by the Schur complement, one has $\Delta_X \succ 0$.  Since $\rho(\phi_{L_0})<1$ and by Lemma \ref{lemma.Lyapunov_stability}(b), the unique positive definite solution $\hat{\Sigma}_{L_0}$ of \eqref{eq.lyapunov_sigma_mini} can be written as
\begin{equation}
\nonumber
\hat{\Sigma}_{L_0}=\sum_{j=0}^{\infty}\phi_{L_0}^j\Delta_X(\phi_{L_0}^\tr)^j.
\end{equation}

Letting $L_1$ in \eqref{eq.PIM}, we observe the following identity
\begin{equation}
\label{eq.sigma_1_and_0}
 \begin{aligned}
 \phi_{L_0}\hat{\Sigma}_{L_0}\phi_{L_0}^\tr&=\phi_{L_1}\hat{\Sigma}_{L_0}\phi_{L_1}^\tr\\
 &\quad+ (L_1-L_0)C\hat{\Sigma}_{L_0} C^\tr(L_1-L_0)^\tr.
 \end{aligned}  
\end{equation}
Plugging \eqref{eq.sigma_1_and_0} in \eqref{eq.lyapunov_sigma_mini}, $\hat{\Sigma}_{L_0}$ also satisfies the equation 
\begin{equation}
\nonumber
 \hat{\Sigma}_{L_0}=\phi_{L_1}\hat{\Sigma}_{L_0}\phi_{L_1}^\tr + M=\sum_{j=0}^{\infty}\phi_{L_1}^jM(\phi_{L_1}^\tr)^j,
\end{equation}
where 
$ M = \Delta_X + (L_1-L_0)C\hat{\Sigma}_{L_0} C^\tr(L_1-L_0)^\tr \succ 0.$ 
This implies that $\rho(\phi_{L_1}) <1$ and that $\hat{\Sigma}_{L_1} \in \mathbb{S}_{++}^n$ by Lemma \ref{lemma.Lyapunov_stability}(b).

Using \eqref{eq.sigma_1_and_0} with $\hat{\Sigma}_{L_0}$ and $\hat{\Sigma}_{L_1}$ given by \eqref{eq.lyapunov_sigma_mini}, we obtain
\begin{equation}
\nonumber
 \begin{aligned}
\hat{\Sigma}_{L_0} -\hat{\Sigma}_{L_1}
 & =\phi_{L_1}(\hat{\Sigma}_{L_0}-\hat{\Sigma}_{L_1})\phi_{L_1}^\tr \\
 &\qquad + (L_1-L_0)C\hat{\Sigma}_{L_0} C^\tr (L_1-L_0)^\tr\\
 &=\sum_{j=0}^{\infty}\phi_{L_1}^j(L_1-L_0)C\hat{\Sigma}_{L_0} C^\tr(L_1-L_0)^\tr(\phi_{L_1}^\tr)^j\\
 &\succeq 0 .
 \end{aligned}  
\end{equation}
We can easily obtain \eqref{eq.nonincreasing_sigma}
and 
$\rho(\phi_{L_i}) <1$ for $\forall i \in \mathbb{N}$ by induction. 

According to \eqref{eq.lyapunov_sigma_mini}, $\hat{\Sigma}_L \succeq \Delta_X \succ 0$ for all $L \in \mathbb{L}$. This means $\hat{\Sigma}_{L_i}$ must be bounded below by a certain positive definite matrix. Combining this with \eqref{eq.nonincreasing_sigma}, we can define \begin{equation}
\nonumber
{\hat{\Sigma}}:=\lim_{i \rightarrow \infty }\hat{\Sigma}_{L_i},
\end{equation}
and such that 
\begin{equation}
\nonumber
L^\star:=\lim_{i \rightarrow \infty }L_i = A \hat{\Sigma} C^\tr (C\hat{\Sigma} C^\tr)^{-1}\in \mathbb{L}.
\end{equation}
By plugging ${\hat{\Sigma}}$ and $L^\star$ in \eqref{eq.lyapunov_sigma_mini}, we observe that $\hat{\Sigma}\in \mathbb{S}_{++}^n$ is the positive definite solution to \eqref{eq.sigma_riccati}.

Similar to \eqref{eq.sigma_1_and_0}, for $\forall L \in \mathbb{L}$, we can further derive that 
\begin{equation}
\label{eq.sigma_L_and_optimal}
 \begin{aligned}
\phi_L\hat{\Sigma}\phi_L^\tr= \phi_{L^\star}\hat{\Sigma}\phi_{L^\star}^\tr+ (L^\star-L)C\hat{\Sigma} C^\tr(L^\star-L)^\tr.
 \end{aligned}  
\end{equation}
Using \eqref{eq.sigma_L_and_optimal} and \eqref{eq.lyapunov_sigma_mini}, we obtain
\begin{equation}
\label{eq.optimal_sigma}
 \begin{aligned}
\hat{\Sigma}_L &-\hat{\Sigma}\\
 & =\phi_L \hat{\Sigma}_L\phi_L^\tr-\phi_{L^\star} \hat{\Sigma}\phi_{L^\star}^\tr\\
  & =\phi_L \hat{\Sigma}_L\phi_L^\tr-\phi_{L} \hat{\Sigma}\phi_{L}^\tr+\phi_{L} \hat{\Sigma}\phi_{L}^\tr-\phi_{L^\star} \hat{\Sigma}\phi_{L^\star}^\tr\\
  & =\phi_L(\hat{\Sigma}_L- \hat{\Sigma})\phi_L^\tr+ (L^\star-L)C\hat{\Sigma} C^\tr(L^\star-L)^\tr\\
  &=\sum_{j=0}^{\infty}\phi_{L}^j(L^\star-L)C\hat{\Sigma} C^\tr(L^\star-L)^\tr(\phi_{L}^\tr)^j\\
 &\succ 0, \quad \forall L\in \mathbb{L}\backslash L^\star.
 \end{aligned}  
\end{equation}
So far, we have proved that the positive definite solution to \eqref{eq.sigma_riccati} exists, and $L^\star$ in \eqref{eq.optimal_L} is the optimal solution to \eqref{eq.problem_of_observer}.

As for the uniqueness, let $\hat{\Sigma}' \in \mathbb{S}_{++}^n$ be another solution to \eqref{eq.sigma_riccati}. Similar to \eqref{eq.optimal_sigma}, we have 
\begin{equation}
\nonumber
\hat{\Sigma}_L-\hat{\Sigma}'\succ 0, \quad \forall L\in \mathbb{L}\backslash \{L'\},
\end{equation}
where $L':=A \hat{\Sigma}' C^\tr (C\hat{\Sigma}' C^\tr)^{-1}$.
This is contrary to that $L^\star$ is the globally optimal solution of problem \eqref{eq.problem_of_observer}, which establishes uniqueness. 
\end{proof}

\subsection{Some detailed derivations}
\subsubsection{Derivation of \eqref{eq.K_original}}
\label{sec.detail_derivation_of_K_original}

Let \eqref{eq.gradient} equal to 0, we have
\begin{subequations}
\begin{align}
&\begin{aligned}
\label{eq.K_12_stationary}
C_{\mK^\star}&=-(R + B^\top P_{11} B)^{-1}(B^\top P_{12}A_{\mK^\star} \\
&\qquad +B^\top  (P_{11}A+P_{12}B_{\mK^\star}C)\Sigma_{12}\Sigma_{22}^{-1}),
\end{aligned}\\
&\begin{aligned}
\label{eq.K_21_stationary}
B_{\mK^\star} &= -(P_{22}^{-1} P_{12}^\top A\Sigma_{11} + P_{22}^{-1}P_{12}^\top B C_{\mK^\star}\Sigma_{12}^\top\\
&\qquad +A_{\mK^\star}\Sigma_{12}^\top) C^\top (C\Sigma_{11} C^\top)^{-1},
\end{aligned}\\
&\begin{aligned}
\label{eq.K_22_stationary}
A_{\mK^\star}&=-P_{22}^{-1}P_{12}^\top B C_{\mK^\star} - P_{22}^{-1}P_{12}^\top A\Sigma_{12}\Sigma_{22}^{-1}\\
&\qquad-B_{\mK^\star}C\Sigma_{12}\Sigma_{22}^{-1}.
\end{aligned}
\end{align}
\end{subequations}

By plugging \eqref{eq.K_22_stationary} in \eqref{eq.K_12_stationary}, we get \eqref{eq.K_12_original}. Similarly, by plugging \eqref{eq.K_22_stationary} in \eqref{eq.K_21_stationary}, one has \eqref{eq.K_21_original}. Finally, if we use both \eqref{eq.K_12_original} and \eqref{eq.K_21_original} in \eqref{eq.K_22_stationary}, we arrive at \eqref{eq.K_22_original}.

\subsubsection{Derivation of \eqref{eq.inverse}}
\label{sec.detail_derivation_of_inverse}
From \eqref{eq.block_lyapunov_P12} and \eqref{eq.block_lyapunov_P22}, we observe that 
\begin{equation}
\nonumber
\begin{aligned}
&P_{12}^\tr \Sigma_{12} + P_{22} \Sigma_{22}\\
\quad = &(C_{\mK}^\tr B^\tr P_{11} A + A_{\mK}^\tr P_{12}^\tr A + C_{\mK}^\tr B^\tr P_{12} B_{\mK}C \\
&\qquad \qquad \qquad \qquad \qquad \qquad  \ +A_{\mK}^\tr P_{22}B_{\mK}C)\Sigma_{12} \\
&\qquad +(C_{\mK}^\tr R C_{\mK}+ C_{\mK}^\tr B^\tr P_{11} B C_{\mK} + A_{\mK}^\tr P_{12}^\tr B C_{\mK} \\
&\qquad\qquad\qquad+ C_{\mK}^\tr B^\tr P_{12} A_{\mK}+A_{\mK}^\tr P_{22}A_{\mK})\Sigma_{22}.
\end{aligned}
\end{equation}
Plugging \eqref{eq.K_12_original}, \eqref{eq.K_21_original}, and \eqref{eq.K_22_original} in the above equation, we can easily derive that 
\begin{equation}
\nonumber
\begin{aligned}
&P_{12}^\tr \Sigma_{12} + P_{22} \Sigma_{22}\\
\quad = &(C_{\mK}^\tr B^\tr P_{11} A + A_{\mK}^\tr P_{12}^\tr A + C_{\mK}^\tr B^\tr P_{12} B_{\mK}C \\
&\qquad \qquad \qquad \qquad \qquad \qquad  \ +A_{\mK}^\tr P_{22}B_{\mK}C)\Sigma_{12} \\
&\qquad -(C_{\mK}^\tr R K^\star+ C_{\mK}^\tr B^\tr P_{11} B K^\star + A_{\mK}^\tr P_{12}^\tr B K^\star \\
&\qquad\qquad\qquad +C_{\mK}^\tr B^\tr P_{12}P_{22}^{-1}P_{12}^\tr (A-BK^\star-L^\star C)\\
& \qquad\qquad\qquad +A_{\mK}^\tr P_{12}^\tr (A-BK^\star-L^\star C))\Sigma_{12}\\
\quad =&C_{\mK}^\tr ( B^\tr \widetilde{P} A  - (R+B^\tr \widetilde{P}B) K^\star)\Sigma_{12}\\
\quad \overset{1}{=}&C_{\mK}^\tr ( B^\tr \widetilde{P} A - (R+B^\tr \widetilde{P}B) (R+B^\tr {\widetilde{P}}B)^{-1}B^\tr {\widetilde{P}}A)\Sigma_{12}\\
\quad = &0,
\end{aligned}
\end{equation}
where step 1 follows by \eqref{eq.optimal_K}.

\subsubsection{Derivation of \eqref{eq.relation_P_and_x}}
\label{sec.detail_derivation_of_T}
From \eqref{eq.block_lyapunov_sigma12} and \eqref{eq.block_lyapunov_sigma22}, \eqref{eq.inverse} can be rewritten as 
\begin{equation}
\label{eq.relation_P_and_x_expand}
\begin{aligned}
&P_{12}^\tr \Sigma_{12} + P_{22} \Sigma_{22}\\
\quad = & P_{12}^\tr (X_{12}+  A \Sigma_{11}C^\tr B_{\mK}^\tr + B C_{\mK} \Sigma_{12}^\tr C^\tr B_{\mK}^\tr \\
&\qquad \qquad + A \Sigma_{12} A_{\mK}^\tr +B C_{\mK} \Sigma_{22} A_{\mK}^\tr)
\\
& + P_{22}(X_{22}+B_{\mK} C \Sigma_{11} C^\tr B_{\mK}^\tr + A_{\mK}\Sigma_{12}^\tr C^\tr B_{\mK}^\tr \\
&\qquad \qquad + B_{\mK}C \Sigma_{12} A_{\mK}^\tr+A_{\mK} \Sigma_{22}A_{\mK}^\tr)\\
\quad=&0.
\end{aligned}
\end{equation}
Plugging \eqref{eq.K_12_original}, \eqref{eq.K_21_original}, and \eqref{eq.K_22_original} in \eqref{eq.relation_P_and_x_expand}, one has
\begin{equation}
\nonumber
\begin{aligned}
&P_{12}^\tr \Sigma_{12} + P_{22} \Sigma_{22}\\
\quad = &P_{12}^\tr X_{12}  + P_{22}X_{22} \\
&\qquad +  P_{12}^\tr(A \Sigma_{11}C^\tr B_{\mK}^\tr + B C_{\mK} \Sigma_{12}^\tr C^\tr B_{\mK}^\tr \\
&\qquad \qquad + A \Sigma_{12} A_{\mK}^\tr +B C_{\mK} \Sigma_{22} A_{\mK}^\tr)
\\
&\qquad -P_{12}^\tr(L^\star C \Sigma_{11} C^\tr B_{\mK}^\tr + L^\star C \Sigma_{12} A_{\mK}^\tr \\
& \qquad \qquad + (A-BK^\star-L^\star C)\Sigma_{12}\Sigma_{22}^{-1}\Sigma_{12}^\tr C^\tr B_{\mK}^\tr \\
&\qquad \qquad +(A-BK^\star-L^\star C) \Sigma_{12}A_{\mK}^\tr)\\
\quad = & P_{12}^\tr X_{12}  + P_{22}X_{22} +  P_{12}^\tr (A-L^\star C) \hat{\Sigma}C^\tr B_{\mK}^\tr\\
\quad \overset{1}{=} &P_{12}^\tr X_{12}  + P_{22}X_{22} \\
&\qquad +  P_{12}^\tr (A-A{\hat{\Sigma}}C^\tr (C{\widetilde{\Sigma}}C^\tr)^{-1}C) \widetilde{\Sigma}C^\tr B_{\mK}^\tr\\
\quad = &P_{12}^\tr X_{12}  + P_{22}X_{22} \\
\quad=&0,
\end{aligned}
\end{equation}
where step 1 follows by  \eqref{eq.optimal_L}.

\subsection{Further analysis on the equivalence between \texttt{dLQR} and LQG}
\label{appendix.general_equi}
In Problem \ref{pro.dLQR_initial_estimate}, the initial controller state is assumed to be zero-mean and independent of the initial system state. We here discuss a more general setting. 
\begin{problem}[A general version of Problem \ref{pro.dLQR_initial_estimate}]
\label{pro.dLQR_initial_estimate_general}
\begin{equation}  
\nonumber
\begin{aligned}
\min_{\mK} \quad&  J(\mK)\\
\text{\rm subject to} \quad &\mK\in \mathbb{K},
\end{aligned}
\end{equation}
where $J(\mK)$ is defined in~\eqref{eq.cost_in_P} and $\mathbb{K}$ is given in~\eqref{eq:stabilizing-K}. The initial controller state $\xi_0$ here satisfies the structural constraint \eqref{eq.structure_constraint}, where $s\in \mathbb{R}^d$ is randomly sampled from the distribution $\mathcal{D}_s$, which can be correlated with $x_0$ with $\mathbb{E}[x_0s^\tr]=M$. In this case, 
\begin{equation}
\label{eq.general_initial_matrix}
X= \mathbb{E}_{\bar{x}_0\sim \bar{\mathcal{D}}}[\bar{x}_0\bar{x}_0^\tr]= \begin{bmatrix}
   X_{11}  & MB_\mK^\tr \\
    B_\mK M^\tr & B_{\mK} V B_{\mK}^\tr
\end{bmatrix},
\end{equation}
where $V = \mathbb{E}_{s\sim \mathcal{D}_s}[ss^\tr]$ is fixed. 
\end{problem}

\begin{proposition}
\label{proposition.equivalence}
Suppose $C$ has full row rank, $V \in \mathbb{S}_{++}^d$, and Assumption \ref{assumption.control_observe} holds. All stationary points $\mK^\star \in  \mathbb{K}\cap\mathbb{K}_o \cap \mathbb{K}_s$ to Problem \ref{pro.dLQR_initial_estimate_general} are globally optimal if $(\bar{A}+\bar{B}\mK^\star\bar{C},X^{\frac{1}{2}})$ is reachable, and they are in the form of \eqref{eq.optimal_K_LQG} with $\mK^{\ddagger}$ defined in \eqref{eq.dlqr_2_cost} and $K^\star$ defined in \eqref{eq.optimal_K}. Compared with Theorem \ref{theorem.optimal_equivalence}, the only difference is that 
\begin{equation}
\nonumber
L^\star = (A{\hat{\Sigma}} C^\tr + M) (C{\hat{\Sigma}} C^\tr+V)^{-1},
\end{equation}
where ${\hat{\Sigma}}$ being the unique positive definite solution to the following Riccati equation 
\begin{equation}
\nonumber
\begin{aligned}
&{\hat{\Sigma}}=X_{11}+A {\hat{\Sigma}} A^\tr \\
&\qquad - (A {\hat{\Sigma}}C^\tr +M) (C{\hat{\Sigma}}\bar{C}^\tr+ V)^{-1} (A {\hat{\Sigma}}C^\tr +M)^\tr.
\end{aligned}
\end{equation}
\end{proposition}
\begin{proof}
The proof is very similar to that of Theorems \ref{theorem.solution_expression} and \ref{theorem.optimal_equivalence}. Some key points of this proof are as follows. The gradient of the \texttt{dLQR} cost w.r.t. $B_{\mK}$ is
\begin{equation}
\nonumber
\begin{aligned}
&\nabla_{B_{\mK}} J(\mK)=2P_{12}^\tr (A \Sigma_{11} C^\tr +M)+\\
&\ 2\left(P_{22}B_{\mK}(C\Sigma_{11} C^\tr +V)+(P_{12}^\tr B C_{\mK}+P_{22}A_{\mK})\Sigma_{12}^\tr C^\tr\right).
\end{aligned}
\end{equation}
And by Lemma \ref{lemma.Lyapunov_stability}(c), the positive definiteness of $\Sigma_{\mK^\star}$ can be guaranteed by the reachability of $(\bar{A}+\bar{B}\mK^\star\bar{C},X^{\frac{1}{2}})$.
\end{proof}

As shown in Proposition \ref{proposition.equivalence}, the optimal solution of Problem \ref{pro.dLQR_initial_estimate_general} is identical to that of the discrete-time LQG in Problem \ref{pro.LQG} when the process noise and measurement noise are correlated with $X_{11} = \mathbb{E}[w_tw_t^\tr],\; V = \mathbb{E}[v_tv_t^\tr], \;M = \mathbb{E}[w_tv_t^\tr]$, and $L^\star$ is the associated Kalman gain \cite{kwong1991linear}.


\ifCLASSOPTIONcaptionsoff
  \newpage
\fi
\bibliographystyle{ieeetr}
\bibliography{ref}

\newpage
\vskip -2\baselineskip plus -1fil
\begin{IEEEbiography}[{\includegraphics[width=1in,height=1.25in,clip,keepaspectratio]{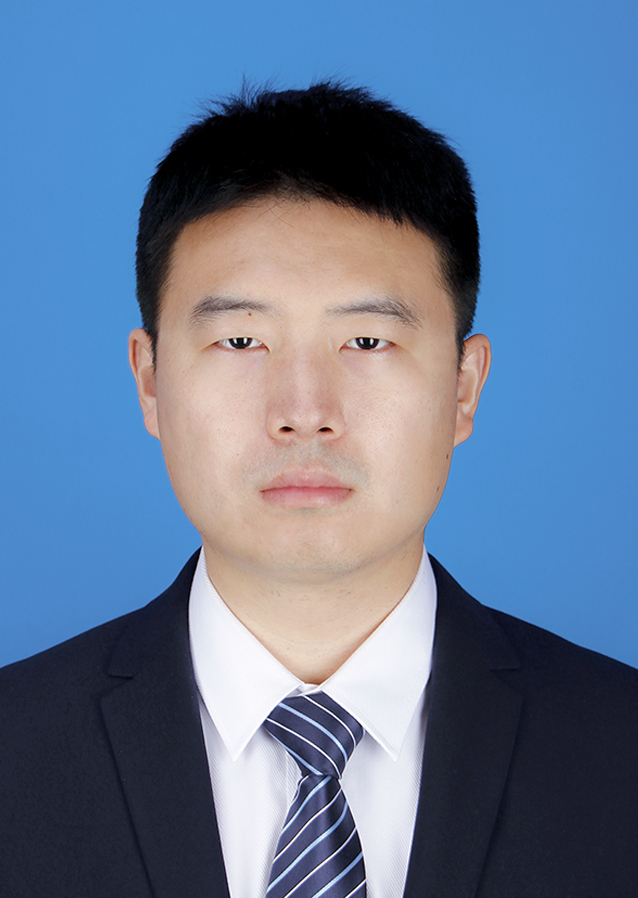}}]{Jingliang Duan} received his Ph.D. degree in the School of Vehicle and Mobility, Tsinghua University, China, in 2021.  He studied as a visiting student researcher in the Department of Mechanical Engineering, University of California, Berkeley, in 2019, and worked as a research fellow in the Department of Electrical and Computer Engineering, National University of Singapore, in from 2021 to 2022. He is currently an associate professor in the School of Mechanical Engineering, University of Science and Technology Beijing, China. His research interests include reinforcement learning, optimal control, and self-driving decision-making.
\end{IEEEbiography}

\vskip -2\baselineskip plus -1fil
\begin{IEEEbiography}[{\includegraphics[width=1in,height=1.25in,clip,keepaspectratio]{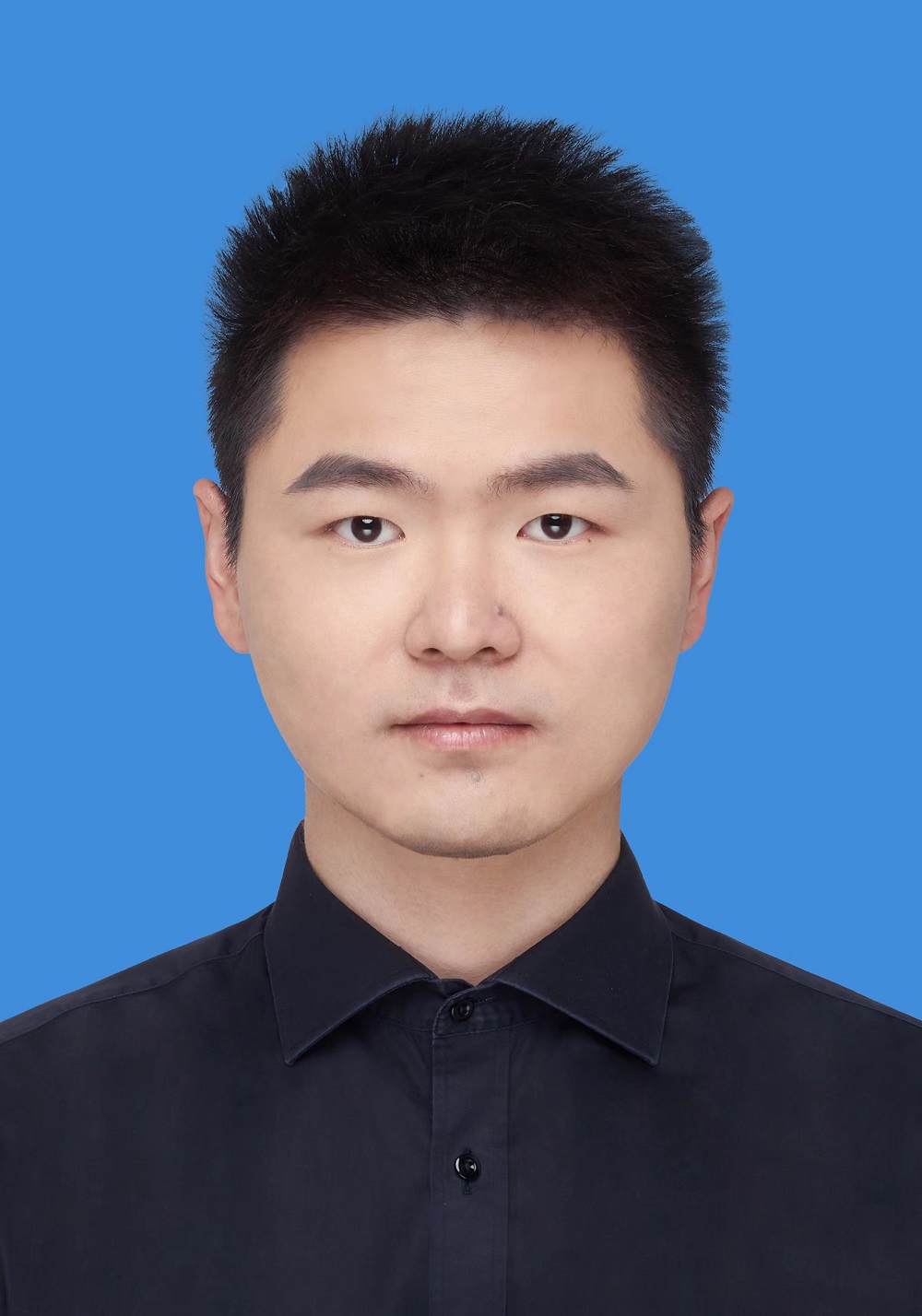}}]{Wenhan Cao} received his B.E. degree in the School of Electrical Engineering from Beijing Jiaotong University, Beijing, China, in 2019. He is currently a Ph.D. candidate in the School of Vehicle and Mobility, Tsinghua University, Beijing, China. His research interests include optimal filtering and reinforcement learning. He was a finalist for the Best Student Paper Award at the 2021 IFAC MECC.
\end{IEEEbiography}

\vskip -2\baselineskip plus -1fil
\begin{IEEEbiography}[{\includegraphics[width=1in,height=1.25in,clip,keepaspectratio]{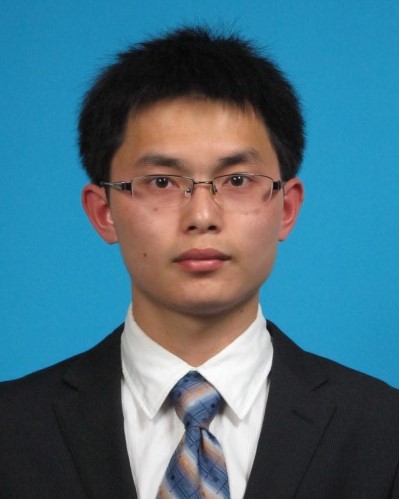}}]{Yang Zheng} received the B.E. and M.S. degrees from Tsinghua University, Beijing, China, in 2013 and 2015, respectively, and the D.Phil. (Ph.D.) degree in Engineering Science from the University of Oxford, U.K., in 2019. He is currently an assistant professor with the Department of Electrical and Computer Engineering, UC San Diego. He was a research associate at Imperial College London and was a postdoctoral scholar in SEAS and CGBC at Harvard University. His research interests focus on learning, optimization, and control of network systems, and their applications to autonomous vehicles and traffic systems.

Dr. Zheng was a finalist (co-author) for the Best Student Paper Award at the 2019 ECC. He received the Best Student Paper Award at the 17th IEEE ITSC in 2014, the Best Paper Award at the 14th Intelligent Transportation Systems Asia-Pacific Forum in 2015, and the 2022 Best Paper Award in the IEEE Transactions on Control of Network Systems. He was a recipient of the National Scholarship, Outstanding Graduate in Tsinghua University, the Clarendon Scholarship at the University of Oxford, and the Chinese Government Award for Outstanding Self-financed Students Abroad. Dr. Zheng won the 2019 European Ph.D. Award on Control for Complex and Heterogeneous Systems.
\end{IEEEbiography}

\vskip -2\baselineskip plus -1fil
\begin{IEEEbiography}[{\includegraphics[width=1in,height=1.25in,clip,keepaspectratio]{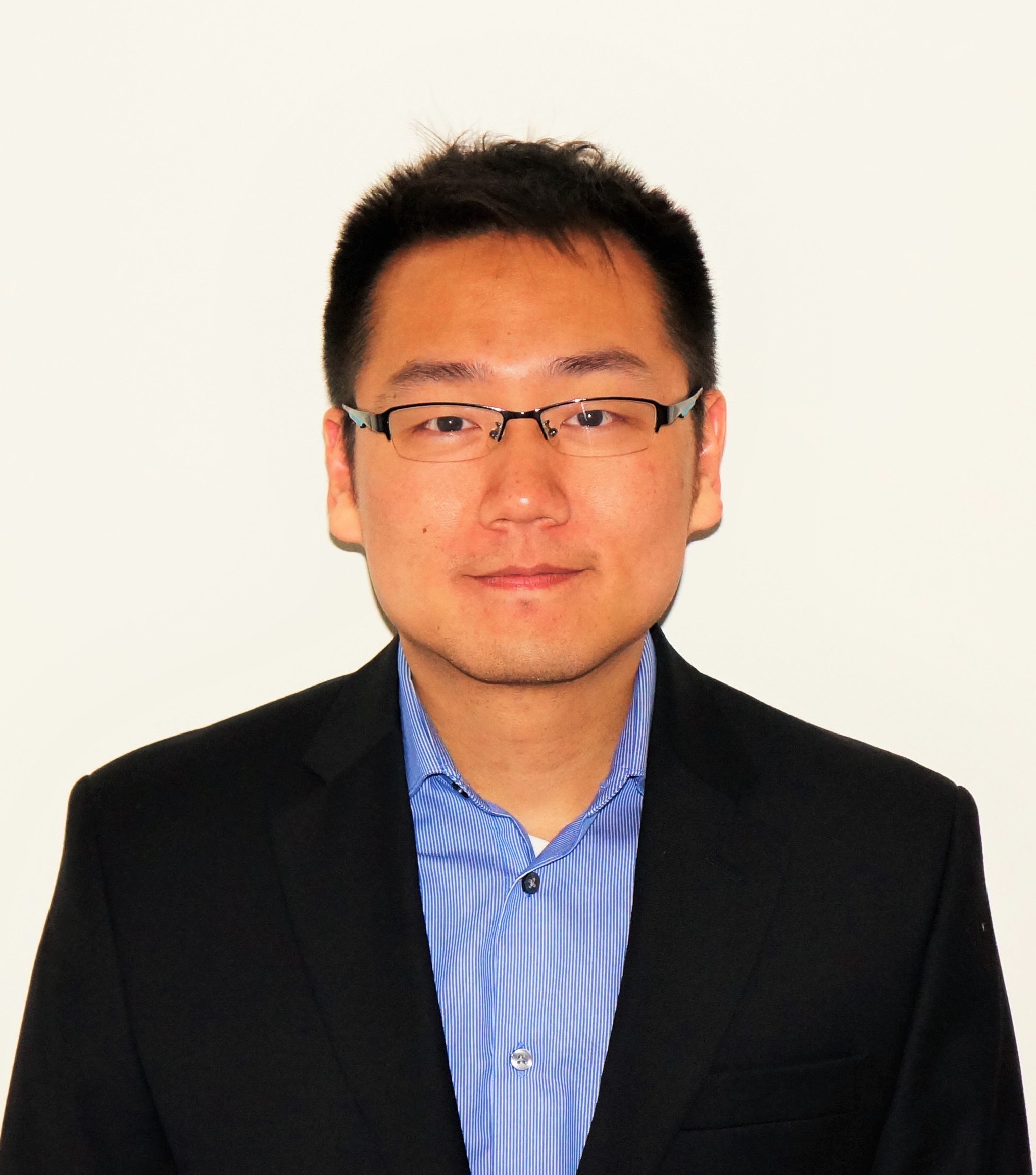}}]{Lin Zhao} received a B.S. and an M.S. degree in automatic control from the Harbin Institute of Technology, Harbin, China, in 2010 and 2012, respectively, an M.S. degree in mathematics and a Ph.D. degree in electrical and computer engineering from The Ohio State University, Columbus, OH, USA, in 2017. From 2018 to early 2020, He was a research scientist at Aptiv Pittsburgh Technology Center (now Motional), Pittsburgh, PA, USA. He is currently an Assistant Professor in the Department of Electrical and Computer Engineering, National University of Singapore, Singapore. His current research focuses on control and reinforcement learning with applications in robotics.
\end{IEEEbiography}

\end{document}